\newtheorem{theorem}{Theorem}[section]
\newtheorem{corollary}[theorem]{Corollary}
\newtheorem{lemma}[theorem]{Lemma}
\newtheorem{proposition}[theorem]{Proposition}
\newtheorem{example}[theorem]{Example}
\theoremstyle{definition}
\newtheorem{definition}[theorem]{Definition}
\theoremstyle{remark}
\newtheorem{remark}[theorem]{Remark}
\numberwithin{equation}{section}
\newcommand{\K}{\mathbb K}
\newcommand{\Z}{\mathbb Z}
\newcommand{\A}{\mathcal{A}}
\newcommand{\B}{\mathfrak{A}}
\newcommand{\LL}{\mathfrak{L}}
\newcommand{\g}{\mathfrak{g}}
\newcommand{\ad}{\emph{ad}}
\begin{document}

\title[  Hom-Lie Algebras  with Symmetric Invariant
NonDegenerate
 Bilinear Forms]
{  Hom-Lie Algebras  with Symmetric Invariant
NonDegenerate
 Bilinear Forms}%
\author{Sa\"{\i}d BENAYADI  and Abdenacer MAKHLOUF  }%
\address{Said Benayadi, Universit\'{e} Paul Verlaine-Metz, LMAM CNRS-UMR 7122, Ile de Saulcy F-57045 Metz-cedex 1, France}
\email{benayadi@univ-metz.fr}
\address{Abdenacer Makhlouf, Universit\'{e} de Haute Alsace,  Laboratoire de Math\'{e}matiques, Informatique et Applications,
4, rue des Fr\`{e}res Lumi\`{e}re F-68093 Mulhouse, France}%
\email{Abdenacer.Makhlouf@uha.fr}

\thanks {
}

 \subjclass[2000]{17B10,17A30,17A45}
\keywords{Hom-Lie algebra, representation, quadratic form, quadratic Hom-Lie algebra, simple Hom-Lie algebra, involutive quadratic algebra}
%
\begin{abstract}
The aim of this paper is to introduce and study quadratic Hom-Lie algebras, which are Hom-Lie algebras with symmetric invariant nondegenerate
 bilinear forms. We provide several constructions leading to examples and  extend the double extension theory to Hom-Lie algebras. We reduce the case where the twist map is invertible to the study of involutive quadratic Lie algebras. We establish a correspondence between the class of involutive quadratic Hom-Lie algebras and  quadratic simple Lie algebras with symmetric involution. Centerless involutive quadratic Hom-Lie algebras are characterized. Also elements of  a representation theory for
Hom-Lie algebras, including adjoint and coadjoint representations are supplied with application to quadratic Hom-Lie algebras.
\end{abstract}
\maketitle

\section*{Introduction}

 The Hom-algebra structures
arose first in quasi-deformation of Lie algebras of vector fields.
Discrete modifications of vector fields via twisted derivations lead
to Hom-Lie and quasi-Hom-Lie structures in which the Jacobi
condition is twisted. The first examples of $q$-deformations, in which the derivations are replaced by $\sigma$-derivations,
concerned the Witt and Virasoro algebras, see for example
\cite{AizawaSaito,ChaiElinPop,ChaiKuLukPopPresn,ChaiIsKuLuk,ChaiPopPres,CurtrZachos1,
DaskaloyannisGendefVir,Kassel1, LiuKeQin,Hu}. A  general study and
construction of Hom-Lie algebras are considered in
\cite{HLS,LS1,LS2} and a more general framework bordering color and
super Lie algebras was introduced
 in \cite{HLS,LS1,LS2,LS3}. In the subclass of Hom-Lie
algebras skew-symmetry is untwisted, whereas the Jacobi identity is
twisted by a single linear map and contains three terms as in Lie
algebras, reducing to ordinary Lie algebras when the twisting linear
map is the identity map.

The notion of Hom-associative algebras generalizing associative
algebras to a situation where associativity law is twisted by a
linear map was introduced  in \cite{MS}, it turns out  that the
commutator bracket multiplication defined using the multiplication
in a Hom-associative algebra leads naturally to Hom-Lie algebras.
This provided a different  way of constructing Hom-Lie algebras. The
Hom-Lie-admissible algebras and more general $G$-Hom-associative
algebras with subclasses of Hom-Vinberg and pre-Hom-Lie algebras,
generalizing to the twisted situation Lie-admissible algebras,
$G$-associative algebras, Vinberg and pre-Lie algebras respectively,
and shown that for these classes of algebras the operation of taking
commutator leads to Hom-Lie algebras as well. The enveloping
algebras of Hom-Lie algebras were discussed in \cite{Yau:EnvLieAlg}.
The fundamentals of the formal deformation theory and associated
cohomology structures for Hom-Lie algebras have been considered
initially  in \cite{HomDeform} and completed in \cite{AEM}. Simultaneously, in \cite{Yau:HomolHomLie}
elements of homology for Hom-Lie algebras  have been developed.
In \cite{HomHopf} and \cite{HomAlgHomCoalg}, the theory of
Hom-coalgebras and related structures are developed. Further development could be found in \cite{AmmarMakhloufJA2010,AM2008,Canepl2009,JinLi,Yau:HomBial}.

The quadratic Lie algebras, also  called metrizable or orthogonal,  are intensively studied, one of the  fundamental results
of constructing and characterizing  quadratic Lie algebras  is due to Medina
and Revoy (see \cite{MedinaRevoy}) using double extension,  while the concept of  $T^*$-extension  is due
to Bordemann (see \cite{Bordemann97}).  The $T^*$-extension concerns nonassociative algebras with nondegenerate associative symmetric bilinear form, such algebras are called  metrizable algebas. In \cite{Bordemann97}, the metrizable  nilpotent associative algebras and metrizable  solvable Lie algebras are described.  The study of graded quadratic Lie algebras could be found in \cite{BenamorBenayadi} .

The purpose of this paper is  to study and construct quadratic Hom-Lie algebras. In  the
first Section  we summarize the definitions and some key constructions of Hom-Lie algebras. Section 2 is dedicated to a theory of representations of Hom-Lie algebras including adjoint and coadjoint representations. In Section 3 we introduce the notion of quadratic Hom-Lie algebra and give some properties. Several procedures of construction leading to many examples are provided in Section 4. We show in Section 5  that there exists biunivoque correspondence between some classes of Lie algebras and classes of Hom-Lie algebras. In Section 6 we study simple and semisimple involutive Hom-Lie algebras. The last  Section aims to extend  double extension theory to Hom-Lie algebras. We give a characterization of a class of quadratic Hom-Lie algebras. Mainly we a give a structure theorem of centerless involutive quadratic Hom-Lie algebra.

\section{Preliminaries} \label{sect1}
 In the following  we
summarize the definitions of Hom-Lie and
Hom-associative algebraic structures  (see \cite{MS}) generalizing
the well known Lie and associative algebras. Also we define
the notion of modules over Hom-algebras.

Throughout the article we let  $\mathbb{K}$ be an algebraically
closed field of characteristic $0$.  We mean by a Hom-algebra a triple $(A,\mu,\alpha )$ consisting of a vector space $A$, a bilinear map $\mu$ and  a linear map $\alpha$. In all the examples involving  the unspecified products are either given by skewsymmetry or equal to zero.

\subsection{Definitions}
The  notion of Hom-Lie algebra  was introduced by Hartwig, Larsson and
Silvestrov in \cite{HLS,LS1,LS2} motivated initially by examples of deformed
Lie algebras coming from twisted discretizations of vector fields.
In this article, we follow notations and a slightly more general definition
of Hom-Lie algebras from \cite{MS}.

\begin{definition} \label{def:HomLie}
A \emph{Hom-Lie algebra} is a triple $(\g, [\ ,
\ ], \alpha)$ consisting of a linear space $\g$ on which  $[\ , \ ]: \g\times \g \rightarrow \g$ is
a bilinear map and $\alpha: \g \rightarrow \g$
 a linear map
 satisfying
\begin{eqnarray} & [x,y]=-[y,x],
\quad {\text{(skew-symmetry)}} \\ \label{HomJacobiCondition} &
\circlearrowleft_{x,y,z}{[\alpha(x),[y,z]]}=0 \quad
{\text{(Hom-Jacobi condition)}}
\end{eqnarray}
for all $x, y, z$ from $\g$, where $\circlearrowleft_{x,y,z}$ denotes
summation over the cyclic permutation on $x,y,z$.
\end{definition}
We recover classical Lie algebra when $\alpha =id_\g$ and the identity \eqref{HomJacobiCondition} is the Jacobi identity in this case.

Let $\left( \g,\mu,\alpha \right) $ and
$\mathfrak{g}^{\prime }=\left( \g^{\prime },\mu ^{\prime
},\alpha^{\prime }\right) $ be two Hom-Lie algebras. A linear map
$f\ :\g\rightarrow \g^{\prime }$ is
a \emph{morphism of Hom-Lie algebras} if%
$$
\mu ^{\prime }\circ (f\otimes f)=f\circ \mu \quad \text{
and } \qquad f\circ \alpha=\alpha^{\prime }\circ f.
$$

In particular, Hom-Lie algebras $\left( \g,\mu,\alpha \right) $ and
$\left( \g,\mu ^{\prime },\alpha^{\prime }\right) $ are isomorphic if
there exists a
bijective linear map $f\ $such that%
$$
\mu =f^{-1}\circ \mu ^{\prime }\circ (f\otimes f)\qquad
\text{ and }\qquad \alpha= f^{-1}\circ \alpha^{\prime }\circ
f.
$$

A subspace  $I$ of $\g$ is said to be an \emph{\emph{ideal}} if for $x\in I$ and $y\in \g$ we have $[x,y]\in I$ and $\alpha (x)\in I.$ A Hom-Lie algebra in which the commutator is not identically zero and which has no proper ideals is called simple.

\begin{example}
Let $\{x_1,x_2,x_3\}$  be a basis of a $3$-dimensional linear space
$\g$ over $\K$. The following bracket and   linear map $\alpha$ on
$\g=\K^3$ define a Hom-Lie algebra over $\K${\rm :}
$$
\begin{array}{cc}
\begin{array}{ccc}
 [ x_1, x_2 ] &= &a x_1 +b x_3 \\ {}
 [x_1, x_3 ]&=& c x_2  \\ {}
 [ x_2,x_3 ] & = & d x_1+2 a x_3,
 \end{array}
 & \quad
  \begin{array}{ccc}
  \alpha (x_1)&=&x_1 \\
 \alpha (x_2)&=&2 x_2 \\
   \alpha (x_3)&=&2 x_3
  \end{array}
\end{array}
$$
with $[ x_2, x_1 ]$, $[x_3, x_1 ]$ and  $[
x_3,x_2 ]$ defined via skewsymmetry. It is not a
Lie algebra if and only if $a\neq0$ and $c\neq0$,
since
$$[x_1,[x_2,x_3]]+[x_3,[x_1,x_2]]
+[x_2,[x_3,x_1]]= a c x_2.$$
\end{example}
\begin{example}[Jackson $\mathfrak{sl}_2$]
The Jackson $\mathfrak{sl}_2$ is a $q$-deformation of the classical  $\mathfrak{sl}_2$. It carries a  Hom-Lie algebra structure but not a Lie algebra structure. It is defined with respect to a basis $\{x_1,x_2,x_3\}$  by the brackets and a linear  map $\alpha$ such that
$$
\begin{array}{cc}
\begin{array}{ccc}
 [ x_1, x_2 ] &= &-2q x_2 \\ {}
 [x_1, x_3 ]&=& 2 x_3 \\ {}
 [ x_2,x_3 ] & = & - \frac{1}{2}(1+q)x_1,
 \end{array}
 & \quad
  \begin{array}{ccc}
  \alpha (x_1)&=&q x_1 \\
 \alpha (x_2)&=&q^2  x_2 \\
   \alpha (x_3)&=&q x_3
  \end{array}
\end{array}
$$
where $q$ is a parameter in $\K$. if $q=1$ we recover the classical $\mathfrak{sl}_2$.
\end{example}
For simplicity we will use in the sequel the following terminology and notations.
\begin{definition}\label{defMultRegInvo}
Let  $(\g, [\cdot, \cdot], \alpha)$ be a Hom-Lie algebra. The Hom-algebra is called
\begin{itemize}
\item \emph{multiplicative Hom-Lie algebra} if  $\forall x,y\in\g$ we have $\alpha([x,y])=[\alpha (x),\alpha (y)]$;
\item  \emph{regular Hom-Lie algebra} if $\alpha$ is an automorphism;
 \item \emph{involutive Hom-Lie algebra} if $\alpha$ is an involution, that is $\alpha^2=id$.
\end{itemize}
The \emph{center} of the Hom-Lie algebra is denoted $\mathcal{Z}(\g)$ and defined by
$$\mathcal{Z}(\g)=\{x\in\g : [x,y]=0\ \forall y\in\g \}.
$$

\end{definition}


We recall in the following the definition of Hom-associative
algebra which provide a different way for constructing Hom-Lie algebras by extending the
fundamental construction of Lie algebras from associative algebras via commutator bracket multiplication. This structure was introduced
by the second author and Silvestrov (see \cite{MS}).
\begin{definition}
A \emph{Hom-associative algebra}  is a triple $( A, \mu,
\alpha) $ consisting of a linear space  $A$,  $\mu :A\times A\rightarrow  A$ is a bilinear map
and $\alpha: A \rightarrow A$ is a linear map, satisfying
\begin{equation}\label{Hom-ass}
\mu(\alpha(x), \mu (y, z))= \mu (\mu (x, y), \alpha (z)).
\end{equation}
\end{definition}

There is a functor from the category of Hom-associative algebras in
the category of Hom-Lie algebras.
\begin{proposition}[\cite{MS}]
Let $( A, \mu, \alpha) $ be a Hom-associative algebra defined on the linear space $A$  by the
multiplication $\mu$ and a homomorphism $\alpha$. Then the triple $( A, [~,~], \alpha) $, where the bracket
is defined for  $x,y \in A$ by  $ [ x,y ]=\mu (x,y)-\mu (y,x )
$, is a Hom-Lie algebra.
\end{proposition}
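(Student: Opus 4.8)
The plan is to verify the two axioms of Definition~\ref{def:HomLie} directly. Skew-symmetry is immediate: since $[x,y]=\mu(x,y)-\mu(y,x)$, interchanging $x$ and $y$ reverses the sign, so $[y,x]=-[x,y]$. The entire content of the proposition therefore lies in checking the Hom-Jacobi condition \eqref{HomJacobiCondition}, and the only tool available is the Hom-associativity relation \eqref{Hom-ass}.

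First I would expand a single cyclic term. Writing out both brackets gives
\begin{align*}
[\alpha(x),[y,z]] &= \mu(\alpha(x),\mu(y,z)) - \mu(\alpha(x),\mu(z,y)) \\
&\quad - \mu(\mu(y,z),\alpha(x)) + \mu(\mu(z,y),\alpha(x)).
\end{align*}
Applying \eqref{Hom-ass} to the first two summands (in the form $\mu(\alpha(a),\mu(b,c))=\mu(\mu(a,b),\alpha(c))$) rewrites every summand as a product of the shape $\mu(\mu(\cdot,\cdot),\alpha(\cdot))$, namely
\begin{align*}
[\alpha(x),[y,z]] &= \mu(\mu(x,y),\alpha(z)) - \mu(\mu(x,z),\alpha(y)) \\
&\quad - \mu(\mu(y,z),\alpha(x)) + \mu(\mu(z,y),\alpha(x)).
\end{align*}

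Next I would form the cyclic sum $\circlearrowleft_{x,y,z}[\alpha(x),[y,z]]$ by adding the two analogous expressions obtained under $x\mapsto y\mapsto z\mapsto x$. This produces twelve terms, each of the form $\pm\mu(\mu(u,v),\alpha(w))$ with $\{u,v,w\}=\{x,y,z\}$. One checks that they cancel in pairs: for instance $+\mu(\mu(x,y),\alpha(z))$ coming from the $x$-term is killed by $-\mu(\mu(x,y),\alpha(z))$ arising from the $z$-term, and similarly for the remaining five pairs. Hence the cyclic sum vanishes and \eqref{HomJacobiCondition} holds, so $(A,[\ ,\ ],\alpha)$ is a Hom-Lie algebra.

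The argument presents no conceptual difficulty; the only point demanding care is the bookkeeping in the final cancellation, that is, verifying that each of the six distinct monomials $\mu(\mu(u,v),\alpha(w))$ occurs exactly twice with opposite signs once \eqref{Hom-ass} has been used to normalize every term into a common shape. This is the step I would write out most explicitly, since it is where an error is most likely to slip in.
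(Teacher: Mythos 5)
Your proof is correct: the expansion of $[\alpha(x),[y,z]]$, the use of Hom-associativity \eqref{Hom-ass} to put all twelve terms of the cyclic sum into the common shape $\mu(\mu(\cdot,\cdot),\alpha(\cdot))$, and the six pairwise cancellations all check out. Note that the paper itself gives no proof of this proposition (it is quoted from \cite{MS}); your direct verification is exactly the standard computation carried out in that reference, so there is nothing to reconcile beyond observing that your argument fills in the omitted details correctly.
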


A structure of module over Hom-associative algebras is defined in  \cite{HomHopf} and \cite{HomAlgHomCoalg} as
follows.
\begin{definition}
Let $(\A,\mu,\alpha)$ be a Hom-associative
 algebra. A   (left) $\A$-module is a triple
$(M,f,\gamma)$ where $M$ is a $\K$-vector space and $f,\gamma$ are
$\K$-linear maps, $f:  M \rightarrow M$ and $\gamma : \A \otimes M
\rightarrow M$, such that the following diagram commutes:

$$
\begin{array}{ccc}
\A\otimes \A\otimes M & \stackrel{\mu \otimes f}{\longrightarrow } &
\A\otimes
M \\
\ \quad \left\downarrow ^{\alpha \otimes \gamma }\right. &  & \quad
\left\downarrow
^\gamma \right. \\
\A\otimes M & \stackrel{\gamma }{\longrightarrow } & M
\end{array}
$$
\end{definition}
\begin{remark}
A Hom-associative algebra $(\A,\mu,\alpha)$ is  a left
$\A$-module with $M=\A$, $f=\alpha$ and $\gamma =\mu$.
\end{remark}

The following result shows that Lie algebras deform into Hom-Lie
algebras via endomorphisms.

\begin{theorem}[\cite{Yau:HomolHomLie}]\label{thmYauConstrHomLie}
Let $(\g,[~,~])$ be a Lie algebra and $\alpha :
\g\rightarrow \g$ be a Lie algebra endomorphism. Then
$\mathfrak{g}_\alpha =(\g,[~,~]_\alpha,\alpha)$ is a Hom-Lie algebra,
where $[~,~]_\alpha=\alpha\circ[~,~]$.

Moreover, suppose that  $(\g',[~,~]')$ is another Lie
algebra and  $\alpha ' : \g'\rightarrow \g'$ is a Lie algebra
endomorphism. If $f:\g\rightarrow \g'$ is a Lie algebra morphism that
satisfies $f\circ\alpha=\alpha'\circ f$ then
$$f:(\g,[~,~]_\alpha,\alpha)\longrightarrow (\g',[~,~]'_{\alpha '},\alpha ')
$$
is a morphism of Hom-Lie algebras.
\end{theorem}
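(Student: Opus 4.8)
The plan is to verify directly the two defining axioms of a Hom-Lie algebra for the triple $\g_\alpha=(\g,[~,~]_\alpha,\alpha)$, and then to check the two conditions in the definition of a morphism of Hom-Lie algebras for the map $f$. The guiding observation, which makes everything routine once spotted, is that because $\alpha$ is a Lie algebra \emph{endomorphism} it commutes with the original bracket, so that a twice-nested twisted bracket collapses to $\alpha^2$ applied to the corresponding untwisted nested bracket; the classical Jacobi identity in $\g$ then does all the work.

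First I would dispose of skew-symmetry: since $[~,~]_\alpha=\alpha\circ[~,~]$ and $\alpha$ is linear, $[x,y]_\alpha=\alpha([x,y])=-\alpha([y,x])=-[y,x]_\alpha$, so antisymmetry is inherited immediately from $\g$. For the Hom-Jacobi condition I would compute a single nested term and then cycle. Expanding the outer and inner twisted brackets by their definition and then invoking the endomorphism identity $[\alpha(u),\alpha(v)]=\alpha([u,v])$ for the underlying bracket, one gets
\begin{equation*}
[\alpha(x),[y,z]_\alpha]_\alpha=\alpha\bigl([\alpha(x),\alpha([y,z])]\bigr)=\alpha\bigl(\alpha([x,[y,z]])\bigr)=\alpha^2\bigl([x,[y,z]]\bigr).
\end{equation*}
Summing over the cyclic permutation and pulling the linear map $\alpha^2$ outside the sum yields $\circlearrowleft_{x,y,z}[\alpha(x),[y,z]_\alpha]_\alpha=\alpha^2\bigl(\circlearrowleft_{x,y,z}[x,[y,z]]\bigr)=\alpha^2(0)=0$, the inner cyclic sum vanishing by the ordinary Jacobi identity in the Lie algebra $\g$. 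This establishes the Hom-Jacobi condition \eqref{HomJacobiCondition} and hence the first assertion.

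For the functoriality statement I would check the two requirements for $f$ to be a morphism of Hom-Lie algebras. Compatibility with the twist maps, $f\circ\alpha=\alpha'\circ f$, is exactly the standing hypothesis. Compatibility with the brackets follows from the same bookkeeping: since $f$ is a Lie algebra morphism,
\begin{equation*}
[f(x),f(y)]'_{\alpha'}=\alpha'\bigl([f(x),f(y)]'\bigr)=\alpha'\bigl(f([x,y])\bigr)=f\bigl(\alpha([x,y])\bigr)=f\bigl([x,y]_\alpha\bigr),
\end{equation*}
where the penultimate equality is the hypothesis $\alpha'\circ f=f\circ\alpha$. I do not anticipate any genuine obstacle here: the entire content is the remark that the endomorphism property converts nested twisted brackets into $\alpha^2$ of nested ordinary brackets, so the only care needed is to expand each occurrence of $[~,~]_\alpha$ into $\alpha$ composed with the underlying Lie bracket \emph{before} invoking the endomorphism identity, and to apply that identity to the underlying bracket rather than to the already-twisted one.
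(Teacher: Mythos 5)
Your proof is correct and follows essentially the same route as the paper: the key computation $[\alpha(x),[y,z]_\alpha]_\alpha=\alpha^2([x,[y,z]])$ is exactly the paper's observation, after which Hom-Jacobi follows from the classical Jacobi identity, and your explicit verifications of skew-symmetry and the morphism property are just the details the paper dismisses as ``proved similarly.''
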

\begin{proof}
Observe that $[\alpha (x),[y,z]_\alpha]_\alpha =\alpha [\alpha
(x),\alpha[y,z]]=\alpha^2 [x,[y,z]] $. Therefore the Hom-Jacobi
identity for $\mathfrak{g}_\alpha=(\g,[~,~]_\alpha,\alpha)$ follows
obviously from the Jacobi identity of $(\g,[~,~])$. The
skew-symmetry and the second assertion are proved similarly.
\end{proof}

In the sequel we denote by $\g_\alpha$ the Hom-Lie algebra $(\g,\alpha\circ [~,~],\alpha)$ corresponding to a given Lie algebra $(\g, [\ ,\ ] )$ and an endomorphism $\alpha$. We say that the Hom-Lie algebra is obtained by composition.

Let $(\g,[~,~],\alpha)$ be a regular  Hom-Lie algebra. It was observed in \cite{Gohr} that the composition method using $\alpha^{-1}$ leads to a Lie algebra.

\begin{proposition}\label{CompoMethodLie}
Let $(\g,[\ ,\ ] ,\alpha)$ be a regular  Hom-Lie algebra.  Then  $(\g,[~,~]_{\alpha^{-1}}=\alpha^{-1}\circ [~,~])$ is a Lie algebra.
\end{proposition}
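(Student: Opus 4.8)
The plan is to write down the candidate bracket $\{x,y\}:=[x,y]_{\alpha^{-1}}=\alpha^{-1}[x,y]$ and check the two Lie axioms directly, the only real input being that \emph{regularity} gives strictly more than mere invertibility of $\alpha$.

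First I would extract from Definition \ref{defMultRegInvo} the two facts about $\alpha$ that I need. Saying $\alpha$ is an automorphism means it is a bijective morphism, so besides being invertible it is multiplicative: $\alpha[x,y]=[\alpha(x),\alpha(y)]$ for all $x,y\in\g$. From this, $\alpha^{-1}$ is multiplicative as well: applying $\alpha^{-1}$ to the identity $[x,y]=\alpha[\alpha^{-1}x,\alpha^{-1}y]$ yields $\alpha^{-1}[x,y]=[\alpha^{-1}x,\alpha^{-1}y]$. These are the only tools the argument uses. Skew-symmetry of $\{\,,\,\}$ is then immediate from linearity of $\alpha^{-1}$, since $\{x,y\}=\alpha^{-1}[x,y]=-\alpha^{-1}[y,x]=-\{y,x\}$.

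For the Jacobi identity I would push the two occurrences of $\alpha^{-1}$ through the brackets using multiplicativity,
\[
\{x,\{y,z\}\}=\alpha^{-1}\big[x,\alpha^{-1}[y,z]\big]=\big[\alpha^{-1}x,\,[\alpha^{-2}y,\alpha^{-2}z]\big],
\]
and then sum over the cyclic permutations of $x,y,z$ to obtain $\circlearrowleft_{x,y,z}\big[\alpha^{-1}x,[\alpha^{-2}y,\alpha^{-2}z]\big]$. The final step is to recognize this cyclic sum as the Hom-Jacobi condition \eqref{HomJacobiCondition} in disguise: substituting $x\mapsto\alpha^{-2}x$, $y\mapsto\alpha^{-2}y$, $z\mapsto\alpha^{-2}z$ into $\circlearrowleft_{x,y,z}[\alpha(x),[y,z]]$ produces exactly $\circlearrowleft_{x,y,z}[\alpha^{-1}x,[\alpha^{-2}y,\alpha^{-2}z]]$, since $\alpha\circ\alpha^{-2}=\alpha^{-1}$ and the cyclic structure matches term by term under the substitution. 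Because \eqref{HomJacobiCondition} holds for every triple of elements, this sum vanishes, which is precisely the Jacobi identity for $\{\,,\,\}$.

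The computation is essentially routine; the one point that must not be glossed over is that regularity (automorphism), and not mere bijectivity of $\alpha$, is what licenses moving $\alpha^{-1}$ inside the bracket — without multiplicativity the statement fails. It is also worth noting the conceptual reading: the relation $[x,y]=\alpha\{x,y\}$ exhibits the given Hom-Lie structure as the composition algebra $\g_\alpha$ built from $(\g,\{\,,\,\})$ as in Theorem \ref{thmYauConstrHomLie}, so this proposition is exactly the converse of that construction in the invertible case.
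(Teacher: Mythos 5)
Your proof is correct and takes essentially the same route as the paper's: both verify the Jacobi identity for $\alpha^{-1}\circ[\ ,\ ]$ by a direct computation, using the multiplicativity of $\alpha$ (equivalently of $\alpha^{-1}$) to rewrite each cyclic term and then invoking the Hom-Jacobi condition. The only difference is organizational --- the paper factors $\alpha^{-2}$ out front and applies Hom-Jacobi at $(x,y,z)$, writing $\circlearrowleft_{x,y,z}\alpha^{-2}[\alpha(x),[y,z]]=0$, whereas you push $\alpha^{-1}$ inside the brackets and apply Hom-Jacobi at $(\alpha^{-2}x,\alpha^{-2}y,\alpha^{-2}z)$; the two displays are the same identity, related by applying the injective map $\alpha^{-2}$.
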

\begin{proof}It follows from
\begin{align*}
\circlearrowleft_{x,y,z}{[x,[y ,z]_{\alpha^{-1}}]_{\alpha^{-1}}}=\circlearrowleft_{x,y,z}{\alpha^{-1}( [x,\alpha^{-1} ([y,z] ) ])}=\circlearrowleft_{x,y,z}{{\alpha^{-2}}[\alpha (x),[y,z]]}=0.
\end{align*}
\end{proof}
\begin{remark}
In particular  the proposition is valid  when $\alpha$ is an involution.
\end{remark}
We may also derive new Hom-Lie algebras from a given multiplicative  Hom-Lie algebra using the following procedure.

\begin{definition}[\cite{Yau4}]\label{defiNDerivedHom} Let  $\left( \g,[\ ,\ ] ,\alpha \right) $ be a multiplicative Hom-Lie algebra and $n\geq 0$. The  $n$th derived Hom-algebra  of $\g$ is  defined by
\begin{equation}\label{DerivedHomAlgtype1}
\g _{(n)}=\left( \g,[\ ,\ ]^{(n)}=\alpha^{n }\circ[\ ,\ ] ,\alpha^{n+1} \right),
\end{equation}
Note that $\g_{(0)}=\g$ and  $\g_{(1)}=\left( \g,[\ ,\ ]^{(1)}=\alpha\circ[\ ,\ ] ,\alpha^{2} \right)$.
\end{definition}
Observe that for $n\geq 1$ and $x,y,z\in \g$ we have
\begin{eqnarray*}
[[x,y]^{(n)},\alpha^{n+1}(z)]^{(n)}&=& \alpha^{n }([\alpha^{n }([x,y]),\alpha^{n+1}(z)])\\
\ &=& \alpha^{2n }([[x,y],\alpha(z)]).
\end{eqnarray*}
Hence, one obtains the following result.
\begin{theorem}[\cite{Yau4}]\label{ThmConstrNthDerivedLie}
Let   $\left( \g,[\ ,\ ] ,\alpha \right) $ be a multiplicative Hom-Lie algebra. Then its  $n$th derived Hom-algebra  is  a Hom-Lie algebra.
\end{theorem}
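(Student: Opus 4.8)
The plan is to verify directly that the triple $\g_{(n)}=\left(\g,[\ ,\ ]^{(n)},\alpha^{n+1}\right)$ satisfies the two axioms of Definition~\ref{def:HomLie}, namely skew-symmetry of $[\ ,\ ]^{(n)}=\alpha^n\circ[\ ,\ ]$ together with the Hom-Jacobi condition relative to the new twist $\alpha^{n+1}$. Skew-symmetry is immediate and uses nothing beyond linearity: since $\alpha^n$ is linear and $[\ ,\ ]$ is skew-symmetric, one has $[x,y]^{(n)}=\alpha^n([x,y])=-\alpha^n([y,x])=-[y,x]^{(n)}$ for all $x,y\in\g$. So the substance of the proof lies entirely in the Hom-Jacobi identity.

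For the Hom-Jacobi condition I would first record that multiplicativity, $\alpha([a,b])=[\alpha(a),\alpha(b)]$, propagates to all powers of $\alpha$, i.e. $\alpha^k([a,b])=[\alpha^k(a),\alpha^k(b)]$ by an easy induction on $k$. With this in hand I would compute a single cyclic summand and reduce it to the original bracket. Unwinding the definitions of the outer and inner derived brackets gives
$$[\alpha^{n+1}(x),[y,z]^{(n)}]^{(n)}=\alpha^n\!\left([\alpha^{n+1}(x),\alpha^n([y,z])]\right),$$
and pulling $\alpha^n$ inside the bracket, then factoring out $\alpha^{2n}$ by running multiplicativity in reverse, yields
$$[\alpha^{n+1}(x),[y,z]^{(n)}]^{(n)}=[\alpha^{2n+1}(x),\alpha^{2n}([y,z])]=\alpha^{2n}\!\left([\alpha(x),[y,z]]\right).$$
This is exactly the counterpart of the displayed identity $[[x,y]^{(n)},\alpha^{n+1}(z)]^{(n)}=\alpha^{2n}([[x,y],\alpha(z)])$ recorded just before the statement. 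Summing over the cyclic permutation of $x,y,z$ and using linearity of $\alpha^{2n}$ then gives
$$\circlearrowleft_{x,y,z}[\alpha^{n+1}(x),[y,z]^{(n)}]^{(n)}=\alpha^{2n}\!\left(\circlearrowleft_{x,y,z}[\alpha(x),[y,z]]\right)=\alpha^{2n}(0)=0,$$
where the inner cyclic sum vanishes by the Hom-Jacobi condition \eqref{HomJacobiCondition} for the original algebra $(\g,[\ ,\ ],\alpha)$.

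There is no genuine obstacle here; the argument is a direct computation. The one point that must be handled with care, and which is truly doing the work, is the use of multiplicativity to move the powers of $\alpha$ freely across the bracket: this is precisely what lets every cyclic term be written as $\alpha^{2n}$ applied to a term of the original Jacobiator. Without the multiplicativity hypothesis this factorization fails, and the derived algebra need not be Hom-Lie, so I would be explicit about invoking $\alpha^k([a,b])=[\alpha^k(a),\alpha^k(b)]$ at each step and about keeping the bookkeeping of the exponents ($n+1$ from the new twist, $n$ from each derived bracket, combining to $2n$ with one factor of $\alpha$ left acting on $x$) correct.
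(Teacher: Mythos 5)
Your proof is correct and follows essentially the same route as the paper: the paper's argument is precisely the observation, recorded just before the theorem, that each cyclic summand of the derived Hom-Jacobi expression factors as $\alpha^{2n}$ applied to a summand of the original one (e.g. $[[x,y]^{(n)},\alpha^{n+1}(z)]^{(n)}=\alpha^{2n}([[x,y],\alpha(z)])$), after which the original Hom-Jacobi identity and linearity of $\alpha^{2n}$ finish the job. Your version merely spells out the skew-symmetry check and the induction giving $\alpha^k([a,b])=[\alpha^k(a),\alpha^k(b)]$, both of which the paper leaves implicit.
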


In the following we construct  Hom-Lie algebras involving elements of the centroid of Lie algebras. Let   $(\mathfrak{g}, [\cdot, \cdot])$ be a Lie algebra. An endomorphism  $\theta\in End (\g) $ is said to be an element of the centroid if $\theta[x,y]=[\theta(x),y]$  for any $x,y\in\g$. The centroid is defined by
$$Cent (\g )=\{ \theta\in End (\g) : \theta [x,y]=[\theta (x),y], \  \forall x,y\in\g\}.
$$
The same definition is assumed for Hom-Lie algebra.
\begin{proposition}
Let   $(\mathfrak{g}, [\cdot, \cdot])$ be a Lie algebra and  $\theta\in Cent (\g) $. Set for $x,y\in\g$
\begin{align*}
\{x,y\}&=[\theta (x),y],\\
[x,y]_\theta &=[\theta (x),\theta (y)].
\end{align*}
Then $(\mathfrak{g},\{\cdot, \cdot\},\theta )$ and $(\mathfrak{g},[\cdot, \cdot]_\theta,\theta )$ are Hom-Lie algebras.
\end{proposition}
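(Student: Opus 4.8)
The plan is to reduce both candidate brackets to a fixed power of $\theta$ applied to the original Lie bracket, and then let the classical Jacobi identity do the work. The essential preliminary observation is that the centroid condition $\theta[x,y]=[\theta(x),y]$ together with skew-symmetry forces $\theta$ to act symmetrically on the two arguments: from $[\theta(x),y]=\theta[x,y]=-\theta[y,x]=-[\theta(y),x]=[x,\theta(y)]$ one gets $[\theta(x),y]=[x,\theta(y)]=\theta[x,y]$. Iterating this yields $[\theta(x),\theta(y)]=\theta[x,\theta(y)]=\theta^2[x,y]$. Consequently $\{x,y\}=\theta[x,y]$ and $[x,y]_\theta=\theta^2[x,y]$, so both new products are obtained from $[\cdot,\cdot]$ by applying a fixed power of $\theta$.

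First I would record skew-symmetry, which is immediate: since $\theta$ is linear and $[\cdot,\cdot]$ is skew-symmetric, $\{x,y\}=\theta[x,y]=-\theta[y,x]=-\{y,x\}$, and likewise for $[\cdot,\cdot]_\theta$. The heart of the argument is the Hom-Jacobi condition. For the first structure I would compute, for a single cyclic term, $\{\theta(x),\{y,z\}\}=\theta\big[\theta(x),\theta[y,z]\big]$, and then move every occurrence of $\theta$ outward using the symmetric centroid identity above to obtain $\theta^3[x,[y,z]]$. Summing over the cyclic permutation of $x,y,z$ and factoring out $\theta^3$ reduces the Hom-Jacobi expression to $\theta^3\big(\circlearrowleft_{x,y,z}[x,[y,z]]\big)$, which vanishes by the Jacobi identity of $\g$. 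The computation for $[\cdot,\cdot]_\theta$ is entirely parallel: $[\theta(x),[y,z]_\theta]_\theta$ collapses to $\theta^5[x,[y,z]]$, and again the cyclic sum is $\theta^5$ times the classical Jacobiator, hence zero.

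The main obstacle is conceptual rather than computational: one cannot simply invoke Theorem \ref{thmYauConstrHomLie}, because a centroid element $\theta$ need not be a Lie algebra endomorphism (indeed $[\theta(x),\theta(y)]=\theta^2[x,y]$, which agrees with $\theta[x,y]$ only when $\theta$ restricts to an idempotent on $[\g,\g]$). The role played there by the endomorphism property is played here instead by the symmetric centroid identity $[\theta(x),y]=[x,\theta(y)]=\theta[x,y]$, and the only care needed is the bookkeeping of the powers of $\theta$ produced when they are all pulled to the front. Once the two products are rewritten as $\theta\circ[\cdot,\cdot]$ and $\theta^2\circ[\cdot,\cdot]$, both Hom-Jacobi identities follow at once from the Jacobi identity of the underlying Lie algebra.
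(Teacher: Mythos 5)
Your proof is correct and follows essentially the same route as the paper's: both hinge on the symmetric centroid identity $[\theta(x),y]=\theta[x,y]=[x,\theta(y)]$ and then reduce the Hom-Jacobi sum to the classical Jacobi identity of $(\g,[\cdot,\cdot])$. The only cosmetic difference is normalization — you pull all twists to the front, writing the cyclic terms as $\theta^3[x,[y,z]]$ and $\theta^5[x,[y,z]]$, whereas the paper leaves them as $[\theta(x),[\theta(y),\theta(z)]]$ and $\theta^2[\theta(x),[\theta(y),\theta(z)]]$ and applies Jacobi to the triple $(\theta(x),\theta(y),\theta(z))$; these are the same computation.
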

\begin{proof}
For  $\theta\in Cent (\g) $ we have $[\theta(x),y]=\theta([x,y])=-\theta([y,x])=-[\theta(y),x]=[x,\theta(y)]$. Then
$$\{x,y\}=[\theta (x),y]=-[\theta (y),x]=-\theta[y,x]=-\{y,x\}.$$
Also we have
\begin{align*}
\{ \theta (x),\{y,z\}\}&= [\theta^2 (x),\{y,z\}]=[\theta^2 (x),[\theta (y),z]]\\
\ &= \theta([\theta (x),[\theta (y),z]])=[\theta (x),\theta([\theta (y),z])]\\
\ &= [\theta (x),[\theta (y),\theta(z)]].
\end{align*}
It follows $\circlearrowleft_{x,y,z}{\{\theta (x),\{y,z\}\}}=\circlearrowleft_{x,y,z}{[\theta (x),[\theta (y),\theta (z)]]}=0$ since $(\g,[\ ,\ ])$ is a Lie algebra.
Therefore the Hom-Jacobi is satisfied. Thus  $(\mathfrak{g},\{\cdot, \cdot\},\theta )$ is a Hom-Lie algebra.

Similarly we have the skewsymmetry and the Hom-Jacobi identity  satisfied for   $(\mathfrak{g},[\cdot, \cdot]_\theta,\theta )$. Indeed
$$[x,y]_\theta=[\theta (x),\theta (y)]=-[\theta (y),\theta (x)]=-[y,x]_\theta.$$
and
\begin{align*}
[\theta (x),[y ,z]_\theta]_\theta&= [\theta^2 (x),\theta([y,z]_\theta ) ]=[\theta^2 (x),\theta ([\theta (y),\theta (z)]]
= \theta^2([ \theta(x),[\theta(y),\theta(z)]].
\end{align*}
which leads to  $\circlearrowleft_{x,y,z}{[\theta (x),[y,z]_\theta]_\theta}=\theta^2(\circlearrowleft_{x,y,z}{[\theta (x),[\theta (y),\theta (z)]]})=0$.
\end{proof}

\section{Representations of Hom-Lie algebras } \label{sect3}

In this section we introduce a representation theory of Hom-Lie algebras
and discuss the cases of adjoint and coadjoint representations for Hom-Lie
algebras. The representations of Hom-Lie algebras were considered independently in a general framework in \cite{Sheng}. Moreover the author give the corresponding coboundary operator, the cohomologies associated to the adjoint representations and the trivial representation are explicitly defined. Notice that the coadjoint representations are not discussed there.
\begin{definition}
Let $(\g, [\ , \ ], \alpha)$ be a Hom-Lie algebra.
A \emph{representation} of $\mathfrak{g}$ is a triple $(V,\rho,\beta)$,
where $V$ is a $\K$-vector space, $\beta \in End(V)$ and $\rho :
\g\rightarrow End(V)$ is a linear map satisfying
\begin{equation}\label{representation}
\rho ([x,y])\circ \beta=\rho(\alpha (x))\circ\rho (y)-\rho(\alpha
(y))\circ\rho (x) \quad \forall x,y\in \g
\end{equation}
\end{definition}

One recovers the definition of a representation in the case of  Lie
algebras by setting $\alpha =Id_\g$ and $\beta=Id_V$.

\begin{definition}
Let $(\g,[\ , \ ], \alpha)$ be a Hom-Lie algebra. Two representations $(V,\rho,\beta)$ and $(V',\rho',\beta')$  of $\mathfrak{g}$ are said to be isomorphic if there exists a linear map
$\phi\ :V \rightarrow V^{\prime }$ such that
$$
 \forall x\in \g \ \ \rho ^{\prime }(x)\circ \phi=\phi\circ \rho(x) \quad \text{
and } \qquad \phi\circ \beta=\beta^{\prime }\circ \phi.
$$
\end{definition}

in the following, we discuss some properties of Hom-Lie algebras representations.

\begin{proposition}\label{prop2}
Let $(\g,[\ , \ ]_\mathfrak{g}, \alpha)$ be a
Hom-Lie algebra and  $(V,\rho,\beta)$ be a representation of
$\mathfrak{g}$.

The direct summand $\g\oplus V$ with a bracket defined by
\begin{equation}
[x+u,y+w]:=[x,y]_\mathfrak{g}+\rho(x)(w)-\rho (y)(u)  \quad \forall
x,y\in \g \ \forall u,w\in V
\end{equation}
and the twisted map  $\gamma : \g\oplus V \rightarrow \g\oplus V$
defined by
\begin{equation}\gamma (x+w)=\alpha (x) +\beta (u)\quad
\forall x\in \g \ \forall u\in V.
\end{equation}
is  a Hom-Lie algebra.
\end{proposition}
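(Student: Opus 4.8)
The plan is to verify the two defining axioms of a Hom-Lie algebra for the triple $(\g\oplus V,[\cdot,\cdot],\gamma)$: the skew-symmetry of the bracket and the Hom-Jacobi condition relative to $\gamma$. Skew-symmetry is immediate and needs nothing beyond the skew-symmetry of $[\cdot,\cdot]_\g$: writing $[y+w,x+u]=[y,x]_\g+\rho(y)(u)-\rho(x)(w)$ and replacing $[y,x]_\g$ by $-[x,y]_\g$ gives $[y+w,x+u]=-[x+u,y+w]$ at once. So the whole content of the statement lies in the Hom-Jacobi identity.

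For the Hom-Jacobi condition I would fix three generic elements $x+u$, $y+w$, $z+t$ of $\g\oplus V$ and expand $[\gamma(x+u),[y+w,z+t]]$ from the definitions. Since $\gamma(x+u)=\alpha(x)+\beta(u)$ and $[y+w,z+t]=[y,z]_\g+\rho(y)(t)-\rho(z)(w)$, one application of the bracket produces a $\g$-component $[\alpha(x),[y,z]_\g]_\g$ together with a $V$-component
\[
\rho(\alpha(x))\bigl(\rho(y)(t)-\rho(z)(w)\bigr)-\rho([y,z]_\g)(\beta(u)).
\]
Because $\g\oplus V$ is a direct sum, the cyclic sum splits accordingly. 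The $\g$-components add up to $\circlearrowleft_{x,y,z}[\alpha(x),[y,z]_\g]_\g$, which vanishes exactly by the Hom-Jacobi identity of $\g$, so that part is free.

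The key step is the vanishing of the cyclic sum of the $V$-components, and here I would not sum term by term but instead regroup the nine resulting summands according to which module element $u$, $w$, or $t$ they act upon. Collecting the coefficient of $u$ yields the operator $\rho(\alpha(y))\rho(z)-\rho(\alpha(z))\rho(y)-\rho([y,z]_\g)\beta$ applied to $u$, which is identically zero by the representation identity \eqref{representation}; the coefficients of $w$ and of $t$ reproduce the same left-hand side of \eqref{representation} after the appropriate cyclic relabelling of $(x,y,z)$, and hence also vanish. The total $V$-component is therefore zero and the Hom-Jacobi condition holds. The only genuine obstacle is bookkeeping: one must keep track of the signs coming from the two $\rho$-terms in the bracket and from the skew-symmetry of $[\cdot,\cdot]_\g$, and match each regrouped coefficient to \eqref{representation} under the correct permutation of $x,y,z$. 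Once the terms are organized by their module argument rather than by the cyclic index, the cancellation is forced entirely by the representation axiom, with no further use of the structure of $V$ or $\rho$.
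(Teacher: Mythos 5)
Your proof is correct and follows essentially the same route as the paper's: expand the cyclic sum, observe that the $\g$-component vanishes by the Hom-Jacobi identity of $\g$, and dispose of the $V$-component using the representation identity \eqref{representation}. The only difference is bookkeeping --- the paper substitutes \eqref{representation} into the $\rho([\cdot,\cdot]_\g)\circ\beta$ terms and cancels the resulting twelve summands in pairs, whereas you collect the nine summands by their module argument and invoke \eqref{representation} three times; these are the same computation read in opposite directions.
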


\begin{proof}
The skew-symmetry of the bracket is obvious. We show that the
Hom-Jacobi identity is satisfied:

Let $ x,y,z\in \g$ and $\forall u,v,w\in V.$
\begin{eqnarray*}
\circlearrowleft_{(x,u),(y,v),(z,w)}[\gamma
(x+u),[y+v,z+w]]=\circlearrowleft_{(x,u),(y,v),(z,w)}[\alpha
(x)+\beta (u),[y,z]_\mathfrak{g}+\rho (y)(w)-\rho (z)(v)]\\
=\circlearrowleft_{(x,u),(y,v),(z,w)}[\alpha
(x),[y,z]_\mathfrak{g}]_\mathfrak{g}+\rho (\alpha (x)(\rho
(y)(w)-\rho (z)(v))-\rho ([y,z]_\mathfrak{g})(\beta(u))\\
=\circlearrowleft_{(x,u),(y,v),(z,w)}{\rho (\alpha (x)(\rho
(y)(w))-\rho (\alpha (x)(\rho (z)(v))-\rho (\alpha (y)(\rho
(z)(u))+\rho (\alpha (z)(\rho (y)(u))}\\
  = \rho (\alpha (x)(\rho
(y)(w))-\rho (\alpha (x)(\rho (z)(v))-\rho (\alpha (y)(\rho
(z)(u))+\rho (\alpha (z)(\rho (y)(u))\\
+\rho (\alpha (y)(\rho (z)(u))-\rho (\alpha (y)(\rho (x)(w))-\rho
(\alpha (z)(\rho (x)(v))+\rho (\alpha (x)(\rho (z)(v))
\\
+\rho (\alpha (z)(\rho (x)(v))-\rho (\alpha (z)(\rho (y)(u))-\rho
(\alpha (x)(\rho (y)(w))+\rho (\alpha (y)(\rho (x)(w))\\
=0
\end{eqnarray*}
where $\circlearrowleft_{(x,u),(y,v),(z,w)}$ denotes summation over
the cyclic permutation on $(x,u),(y,v),(z,w)$.
\end{proof}

Now, we discuss the adjoint representations of a Hom-Lie algebra.

\begin{proposition}
Let $(\g, [\ , \ ], \alpha)$ be a Hom-Lie algebra
and $\ad: \g\rightarrow End(\g)$ be an operator defined for $x\in \g$ by
$\ad(x)(y)=[x,y].$ Then  $(\g,\ad,\alpha)$ is a representation of
$\mathfrak{g}$.
\end{proposition}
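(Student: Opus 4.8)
The plan is to verify that the triple $(\g, \ad, \alpha)$ satisfies the defining identity \eqref{representation} for a representation, namely that
\begin{equation*}
\ad([x,y])\circ\alpha = \ad(\alpha(x))\circ\ad(y) - \ad(\alpha(y))\circ\ad(x) \quad \forall x,y\in\g,
\end{equation*}
where here the role of $\beta$ is played by $\alpha$ itself and $V=\g$. Since this is an identity between linear operators on $\g$, it suffices to evaluate both sides on an arbitrary element $z\in\g$ and show they agree.

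First I would expand the right-hand side applied to $z$ using the definition $\ad(u)(v)=[u,v]$. This gives
\begin{equation*}
\ad(\alpha(x))\ad(y)(z) - \ad(\alpha(y))\ad(x)(z) = [\alpha(x),[y,z]] - [\alpha(y),[x,z]].
\end{equation*}
Next I would expand the left-hand side applied to $z$, obtaining $\ad([x,y])(\alpha(z)) = [[x,y],\alpha(z)]$. The goal is then to show
\begin{equation*}
[[x,y],\alpha(z)] = [\alpha(x),[y,z]] - [\alpha(y),[x,z]].
\end{equation*}

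The key step is to recognize that this is precisely a rearrangement of the Hom-Jacobi condition \eqref{HomJacobiCondition}. Writing out the cyclic sum $\circlearrowleft_{x,y,z}[\alpha(x),[y,z]] = 0$ explicitly gives $[\alpha(x),[y,z]] + [\alpha(y),[z,x]] + [\alpha(z),[x,y]] = 0$. Using skew-symmetry to rewrite $[\alpha(y),[z,x]] = -[\alpha(y),[x,z]]$ and $[\alpha(z),[x,y]] = -[[x,y],\alpha(z)]$, this becomes exactly the desired identity. Thus the verification reduces to a direct application of the Hom-Jacobi condition together with skew-symmetry, and there is no genuine obstacle: the only point requiring care is tracking the signs correctly when applying skew-symmetry to convert the cyclic sum into the representation identity. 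This confirms that $(\g,\ad,\alpha)$ is a representation of $\mathfrak{g}$.
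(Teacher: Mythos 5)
Your proof is correct and takes essentially the same route as the paper: both rewrite the Hom-Jacobi condition $\circlearrowleft_{x,y,z}[\alpha(x),[y,z]]=0$, using skew-symmetry, into the identity $[[x,y],\alpha(z)]=[\alpha(x),[y,z]]-[\alpha(y),[x,z]]$, which is exactly the representation condition \eqref{representation} for $\rho=\ad$ and $\beta=\alpha$ evaluated at $z$. Your version merely makes the sign bookkeeping more explicit than the paper does.
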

\begin{proof}
Since $\mathfrak{g}$ is Hom-Lie algebra,
 the Hom-Jacobi condition on $x,y,z\in \g$ is
 $$[\alpha (x),[y,z]]+[\alpha (y),[z,x]]+[\alpha (z),[x,y]]=0
 $$
 and may be written
  $$\ad[x,y](\alpha(z))=\ad(\alpha (x))(\ad(y)(z))-\ad(\alpha (y))(\ad(x)(z))
 $$
 Then the operator $\ad$ satisfies
 $$\ad[x,y]\circ\alpha=\ad(\alpha (x))\circ \ad(y)-\ad(\alpha (y))\circ(\ad(x).
 $$
 Therefore, it determines a representation of the Hom-Lie algebra
 $\mathfrak{g}$.
\end{proof}

We call the representation defined in the previous proposition
\emph{adjoint representation} of the Hom-Lie algebra.

In the following, we explore the dual representations and coadjoint representations of Hom-Lie algebras.

Let $(\g,[\ , \ ], \alpha)$ be a
Hom-Lie algebra and  $(V,\rho,\beta)$ be a representation of
$\mathfrak{g}$. Let $V^*$ be the dual vector space of $V$.
We define a linear map  $\widetilde{\rho} :\mathfrak{g}\rightarrow End(V^*)$
 by $\widetilde{\rho}(x)=-^{t}\rho (x)$.

Let $f\in V^*$, $x,y\in \g$ and $u\in V$. We compute the right hand side of
the identity (\ref{representation})

\begin{eqnarray*}
(\widetilde{\rho}(\alpha (x))\circ\widetilde{\rho} (y)-\widetilde{\rho}(\alpha
(y))\circ\widetilde{\rho} (x))(f)(u)&=&
(\widetilde{\rho}(\alpha (x))(\widetilde{\rho} (y)(f))-
\widetilde{\rho}(\alpha
(y))(\widetilde{\rho} (x)(f)))(u)\\
\ &=&-\widetilde{\rho} (y)(f)(\rho(\alpha (x))(u))+
\widetilde{\rho} (x)(f)(\rho(\alpha(y))(u))\\
\ &=&f(\rho (y)\rho(\alpha (x))(u))-
f(\rho (x)\rho(\alpha(y))(u))\\
\ &=&f(\rho (y)\rho(\alpha (x))-
\rho (x)\rho(\alpha(y))(u)).
\end{eqnarray*}
On the other hand, we set that the twisted map for
$\widetilde{\rho}$ is $\widetilde{\beta}= ^{t}\beta$, then the left
hand side of (\ref{representation}) writes
\begin{eqnarray*}
((\widetilde{\rho} ([x,y]) \widetilde{\beta})(f))(u)&=&
(\widetilde{\rho} ([x,y])(f \circ \beta )(u),\\
\ &=& -f \circ \beta (\rho ([x,y])(u)).
\end{eqnarray*}
Therefore, we have the following proposition:
\begin{proposition}
Let $(\g, [\cdot, \cdot], \alpha)$ be a
Hom-Lie algebra and  $(V,\rho,\beta)$ be a representation of
$\mathfrak{g}$.

The triple $(V^*,\widetilde{\rho},\widetilde{\beta})$, where
$\widetilde{\rho} :\mathfrak{g}\rightarrow End(V^*)$ is
given by $\widetilde{\rho}(x)=-^{t}\rho (x)$, defines a representation
of the Hom-Lie algebra $(\g, [\cdot, \cdot], \alpha)$ if and only if
\begin{equation}
\beta \circ\rho ([x,y])=\rho (x)\rho(\alpha (y))-
\rho (y)\rho(\alpha(x)).
\end{equation}
\end{proposition}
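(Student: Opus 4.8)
The plan is to unwind the definition of representation applied to the candidate triple $(V^*,\widetilde{\rho},\widetilde{\beta})$ and show that the required identity \eqref{representation} collapses, after evaluating against arbitrary $f\in V^*$ and $u\in V$ and then removing them, precisely to the displayed condition. No deep structural input is needed, so the argument is an equivalence-by-equivalence chain, which automatically yields both directions of the ``if and only if'' at once.

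Concretely, I would first record that, by definition, $(V^*,\widetilde{\rho},\widetilde{\beta})$ is a representation if and only if
$$\widetilde{\rho}([x,y])\circ\widetilde{\beta}=\widetilde{\rho}(\alpha(x))\circ\widetilde{\rho}(y)-\widetilde{\rho}(\alpha(y))\circ\widetilde{\rho}(x)$$
for all $x,y\in\g$. The two computations already carried out above evaluate each side against an arbitrary $f\in V^*$ and $u\in V$: the right-hand side gives $f\big((\rho(y)\rho(\alpha(x))-\rho(x)\rho(\alpha(y)))(u)\big)$, while the left-hand side gives $-f\big(\beta(\rho([x,y])(u))\big)$.

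Next I would equate these two expressions, so that the representation identity for $(V^*,\widetilde{\rho},\widetilde{\beta})$ is equivalent to
$$-f\big(\beta\rho([x,y])(u)\big)=f\big((\rho(y)\rho(\alpha(x))-\rho(x)\rho(\alpha(y)))(u)\big)$$
holding for every $f\in V^*$ and every $u\in V$. The key step is then to invoke non-degeneracy of the canonical pairing $V\times V^*\to\K$: since $f(w)=f(w')$ for all $f\in V^*$ forces $w=w'$, the displayed scalar identity (for all $f$) is equivalent to the vector identity $-\beta\rho([x,y])(u)=(\rho(y)\rho(\alpha(x))-\rho(x)\rho(\alpha(y)))(u)$, and then (for all $u$) to the operator identity $-\beta\rho([x,y])=\rho(y)\rho(\alpha(x))-\rho(x)\rho(\alpha(y))$. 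Rearranging signs gives exactly $\beta\circ\rho([x,y])=\rho(x)\rho(\alpha(y))-\rho(y)\rho(\alpha(x))$, as claimed.

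The only delicate point — and the place where a sign error would most easily creep in — is the handling of the transpose in $\widetilde{\rho}(x)=-{}^{t}\rho(x)$ and in $\widetilde{\beta}={}^{t}\beta$: one must consistently use $({}^{t}T)(f)=f\circ T$ and track the minus signs when composing two copies of $\widetilde{\rho}$ on the right-hand side, where the two signs cancel, versus the single copy paired with $\widetilde{\beta}$ on the left-hand side, where one sign survives. Once that bookkeeping is fixed there is no remaining obstacle, and the equivalence is immediate.
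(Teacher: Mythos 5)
Your proposal is correct and follows essentially the same route as the paper: the paper's argument consists precisely of the two evaluations of the left- and right-hand sides of the representation identity against $f\in V^*$ and $u\in V$ (carried out just before the proposition is stated), after which the equivalence with $\beta\circ\rho([x,y])=\rho(x)\rho(\alpha(y))-\rho(y)\rho(\alpha(x))$ is read off. Your only addition is to make explicit the final step of removing $f$ and $u$ via non-degeneracy of the canonical pairing, which the paper leaves implicit.
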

We obtain the following characterization in the case of adjoint representation.
\begin{corollary}
Let $(\g,[\ , \ ], \alpha)$ be a
Hom-Lie algebra and $(\g,\ad,\alpha)$ be the adjoint representation of
 $\mathfrak{g}$, where $\ad: \g\rightarrow End(\g)$.
 We set $\widetilde{\ad}: \g\rightarrow End(\g^*)$ and
 $\widetilde{\ad}(x)(f)=-f\circ\ad(x).$

 Then $(\g^*,\widetilde{\ad},\widetilde{\alpha})$
 is a representation of $\mathfrak{g}$ if and only if
 \begin{equation}
 \alpha ([[x,y],z])=[x,[\alpha(y),z]]-[y,[\alpha (x),z]]\quad \forall x,y,z\in \g.
 \end{equation}
\end{corollary}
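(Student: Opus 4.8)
The plan is to specialize the preceding proposition to the adjoint representation and read off exactly what the stated condition becomes. Recall that the proposition asserts that the dual triple $(V^*,\widetilde{\rho},\widetilde{\beta})$ is a representation if and only if $\beta\circ\rho([x,y])=\rho(x)\rho(\alpha(y))-\rho(y)\rho(\alpha(x))$. For the adjoint representation we have $V=\g$, $\rho=\ad$, and $\beta=\alpha$, so I would substitute these three identifications directly into that equation. This reduces the corollary to rewriting the operator identity $\alpha\circ\ad([x,y])=\ad(x)\circ\ad(\alpha(y))-\ad(y)\circ\ad(\alpha(x))$ as the bracket identity displayed in the statement.

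First I would evaluate both sides of the operator equation on an arbitrary $z\in\g$, turning the composition of endomorphisms into nested brackets via $\ad(a)(b)=[a,b]$. The left-hand side $\alpha\circ\ad([x,y])$ applied to $z$ gives $\alpha([[x,y],z])$, which is precisely the left-hand side of the stated condition. For the right-hand side, $\ad(x)\ad(\alpha(y))(z)=\ad(x)([\alpha(y),z])=[x,[\alpha(y),z]]$ and likewise $\ad(y)\ad(\alpha(x))(z)=[y,[\alpha(x),z]]$, so the difference is $[x,[\alpha(y),z]]-[y,[\alpha(x),z]]$, matching the right-hand side. Since $z$ is arbitrary, the operator identity holds if and only if the bracket identity holds for all $x,y,z\in\g$, which is exactly the claimed equivalence.

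There is essentially no hard obstacle here: the corollary is a direct translation of the proposition, and the only thing to be careful about is bookkeeping of the twist map. The one point worth checking explicitly is the role of $\widetilde{\beta}$: in the adjoint setting $\beta=\alpha$, and the dual twist is $\widetilde{\alpha}={}^{t}\alpha$, so I would confirm that the proposition's condition, which involves $\beta=\alpha$ on the left and $\alpha$ inside the brackets on the right, is correctly the only constraint and that the sign convention $\widetilde{\ad}(x)(f)=-f\circ\ad(x)$ is consistent with $\widetilde{\rho}(x)=-{}^{t}\rho(x)$ used in the proposition. Once that alignment is verified, the proof is a two-line unwinding of definitions, and I would present it as such.
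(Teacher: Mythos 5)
Your proof is correct and takes essentially the same approach as the paper: the paper states this corollary with no separate proof, treating it as the immediate specialization $V=\g$, $\rho=\ad$, $\beta=\alpha$ of the preceding proposition, which is exactly your argument. Your evaluation of the operator identity on an arbitrary $z\in\g$ and the check that $\widetilde{\ad}(x)(f)=-f\circ\ad(x)$ agrees with $\widetilde{\rho}(x)=-{}^{t}\rho(x)$ are precisely the unwinding the paper leaves implicit.
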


\section{ Definition and properties of Quadratic Hom-Lie algebras}\label{sectDef}
 In this section we extend the notion of quadratic Lie algebra  to Hom-Lie algebras and provide some properties.

Let   $(\g,[\ , \ ])$ be a Lie algebra and $B : \g\times \g \rightarrow \K$ a symmetric nondegenerate  bilinear form satisfying
\begin{equation}\label{InvariantForm}
B([x,y],z)=B(x,[y,z]) \quad \forall x, y, z\in \g.
\end{equation}
The identity \eqref{InvariantForm} may be written $ B([x,y],z)=-B(y,[x,z])$ and is called invariance of $B$. The bilinear form $B$ is called, with misuse of language, invariant scalar product.
The triple  $(\mathfrak{g},[\ , \ ],B)$  is called quadratic Lie algebra or sometimes  orthogonal  Lie algebra.

More generally, for nonassociative algebras $(A,\cdot)$, a triple $(A,\cdot,B)$ where $B$ is a symmetric nondegenerate  bilinear form satisfying
\begin{equation}\label{AssoForm}
B(x\cdot y,z)=B(x,y\cdot z) \quad \forall x, y, z\in A
\end{equation}
defines a quadratic algebra, called also metrizable  algebra. A   bilinear form $B$ satisfying \eqref{AssoForm} is said either  associative form or invariant form.

\begin{definition} \label{def:QuadraticHomLie}
Let  $(\g, [\ , \ ], \alpha)$ be a  Hom-Lie algebra and $B : \g\times \g \rightarrow \K$ be an invariant  symmetric nondegenerate bilinear
form satisfying \begin{equation}\label{AlphaComp}
B(\alpha (x), y)=B(x,\alpha (y)) \quad \forall x, y\in \g.
\end{equation} The quadruple $(\g, [\ , \ ], \alpha,B)$ is
called \emph{quadratic} Hom-Lie algebra.

If $\alpha$ is an involution (resp. invertible), the quadratic Hom-Lie algebra is said to be involutive (resp. regular) quadratic Hom-Lie algebra and we write for shortness IQH-Lie algebra (resp. RQH-Lie algebra).
\end{definition}

We recover the classical notion of quadratic Lie algebra when
$\alpha$ is the identity map. One may consider a larger class with a definition without condition \eqref{AlphaComp}. We may also introduce in the following a  generalized quadratic Hom-Lie algebra notion  where the invariance is twisted by a linear map.

\begin{definition} \label{def:GeneralQuadraticHomLie}
A Hom-Lie algebra $(\g, [\ , \ ], \alpha)$  is
called \emph{Hom-quadratic} if there exist a pair $(B,\gamma)$ where
$B : \g\times \g \rightarrow \K$ is a symmetric nondegenerate bilinear
form and $\gamma : \g\rightarrow \g $ is a linear map
 satisfying
\begin{equation}\label{GeneralInvariantForm}
B([x,y],\gamma(z))=-B(\gamma(y),[x,z]) \quad \forall x, y, z\in \g
\end{equation}
\end{definition}
We call the identity \eqref{GeneralInvariantForm} the $\gamma$-invariance of $B$. We recover the quadratic Hom-Lie algebras when $\gamma=id$.


\subsection{Quadratic Hom-Lie algebras and Representation theory}
We establish in the following a connection between quadratic Hom-Lie algebras and representation theory.
\begin{proposition}
Let  $(\g, [\cdot, \cdot], \alpha)$ be a  Hom-Lie algebra. If  there exists  $B : \g\times \g \rightarrow \K$ a bilinear
form such that  the quadruple $(\g, [\cdot, \cdot], \alpha,B)$ is a quadratic Hom-Lie algebra then
\begin{enumerate}
\item  $(\g^*,\widetilde{\ad},\widetilde{\alpha})$
 is a representation of $\mathfrak{g}$
\item The representations $(\g,\ad ,\alpha)$ and $(\g^*,\widetilde{\ad},\widetilde{\alpha})$ are isomorphic.
\end{enumerate}
\end{proposition}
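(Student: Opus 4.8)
The plan is to produce a single explicit isomorphism between the two representations and to deduce assertion (1) as a consequence of assertion (2). Since $B$ is nondegenerate, the map $\phi:\g\to\g^*$ defined by $\phi(x)(y)=B(x,y)$ for all $x,y\in\g$ is injective, hence a linear isomorphism in the finite-dimensional setting implicit throughout. I claim that $\phi$ intertwines the adjoint representation $(\g,\ad,\alpha)$ with the candidate triple $(\g^*,\widetilde{\ad},\widetilde{\alpha})$.

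First I would check compatibility with the twist maps, namely $\phi\circ\alpha=\widetilde{\alpha}\circ\phi$. Evaluating both sides on $x$ and then on an arbitrary $y\in\g$ yields $B(\alpha(x),y)$ on the left and, since $\widetilde{\alpha}={}^{t}\alpha$, the value $B(x,\alpha(y))$ on the right; these agree precisely by condition \eqref{AlphaComp}. Next I would check intertwining of the actions, $\widetilde{\ad}(x)\circ\phi=\phi\circ\ad(x)$ for every $x$. Evaluating on $y$ and then on $z$, the left-hand side is $\widetilde{\ad}(x)(\phi(y))(z)=-\phi(y)([x,z])=-B(y,[x,z])$, while the right-hand side is $\phi([x,y])(z)=B([x,y],z)$. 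These coincide exactly because of the invariance of $B$ written in the form $B([x,y],z)=-B(y,[x,z])$ from \eqref{InvariantForm}.

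Having established these two relations I would rewrite them as $\widetilde{\alpha}=\phi\circ\alpha\circ\phi^{-1}$ and $\widetilde{\ad}(x)=\phi\circ\ad(x)\circ\phi^{-1}$. Since $(\g,\ad,\alpha)$ is already known to be a representation, conjugating its defining identity \eqref{representation} by the isomorphism $\phi$ shows immediately that $(\g^*,\widetilde{\ad},\widetilde{\alpha})$ satisfies the very same identity, which proves (1). The two relations are also exactly the conditions in the definition of isomorphic representations, so the bijective $\phi$ realizes the isomorphism of (2), and both assertions follow together.

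The computations are short and routine; the only point demanding care is the bookkeeping of signs and transposes, in particular matching the minus sign built into $\widetilde{\ad}(x)(f)=-f\circ\ad(x)$ against the sign in the invariance identity $B([x,y],z)=-B(y,[x,z])$. One could instead prove (1) directly by deriving the identity of the preceding Corollary from the quadratic data, but routing everything through the single map $\phi$ makes both parts fall out at once and avoids a separate verification.
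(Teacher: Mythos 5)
Your proposal is correct, and it organizes the argument differently from the paper. The paper proves assertion (1) first and directly: it verifies the criterion of the preceding corollary, namely
\begin{equation*}
\alpha([[x,y],z])=[x,[\alpha(y),z]]-[y,[\alpha(x),z]],
\end{equation*}
by pairing the difference of the two sides against an arbitrary $u\in\g$ with $B$, unwinding via invariance and the $\alpha$-symmetry \eqref{AlphaComp} until the Hom-Jacobi identity kills the expression, and then invoking nondegeneracy; only afterwards does it introduce $\phi(x)=B(x,\cdot)$ for assertion (2), and there it merely asserts that $\phi$ is a module morphism without writing out the verification. You invert this order: you prove the two intertwining identities for $\phi$ in full (and your sign bookkeeping is right --- $\phi\circ\alpha=\widetilde{\alpha}\circ\phi$ is exactly \eqref{AlphaComp}, and $\widetilde{\ad}(x)\circ\phi=\phi\circ\ad(x)$ is exactly invariance in the form $B([x,y],z)=-B(y,[x,z])$), which gives (2) immediately, and then you obtain (1) by transporting the adjoint representation identity through the bijection $\phi$, which is legitimate since that identity is an equation between compositions of linear operators and $\phi$ is invertible (nondegeneracy plus the finite-dimensionality implicit throughout). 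Your route is more economical and actually supplies the detail the paper omits in part (2); what the paper's route buys is the explicit identity displayed above, which is of independent use --- the paper recalls precisely this coadjoint-representation criterion again (as identity \eqref{cd}) in the proof of the structure theorem for centerless IQH-Lie algebras, whereas in your argument it is only implicit.
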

\begin{proof}
To prove the first assertion, we should show that for any $z$ we have
\begin{equation}\label{cond5.2}
\alpha \circ\ad ([x,y])(z)-\rho (x)\ad(\alpha (y))(z)+
\ad (y)\ad(\alpha(x))(z)=0,
\end{equation}
that is
$$
\alpha [[x,y],z]-[x,[\alpha (y),z]]+
[y,[\alpha(x),z]]=0.
$$
Let $u\in \g$
\begin{small}
\begin{align*}
B(\alpha [[x,y],z]-[x,[\alpha (y),z]]+
[y,[\alpha(x),z]],u)=&B(\alpha [[x,y],z],u)-B([x,[\alpha (y),z]],u)+B(
[y,[\alpha(x),z]],u)\\
\ =&  B( [[x,y],z],\alpha (u))+B([\alpha (y),z],[x,u])-B(
[\alpha(x),z],[y,u])
\\
\ =& -(B( z,[[x,y],\alpha (u)])+B(z,[\alpha (y),[x,u]])-B(
z,[\alpha(x),[y,u]]))
 \\
\ =&  -(B( z,[[x,y],\alpha (u)]+[\alpha (y),[x,u]])-[\alpha(x),[y,u]])
\\
\ =&  -(B( z,[\alpha (u),[y,x]])+[\alpha (y),[x,u]])+[\alpha(x),[u,y]]))
\\
\ =& 0.
\end{align*}
\end{small}
The identity \ref{cond5.2} since $B$ is nondegenerate.

For the second assertion we consider the map $\phi:\g \rightarrow \g^\star $ defined by $x \rightarrow B(x,\cdot)$ which is bijective since $B$ is nondegenerate and  prove that it is also a module morphism.
\end{proof}

\subsection{Ideals and Decompositions of Quadratic Hom-Lie algebras}
We introduce the following definitions and give some properties related to ideals of Hom-Lie algebras.
\begin{definition}
Let $(\mathfrak{g}, [\cdot, \cdot], \alpha,B)$ be a quadratic Hom-Lie algebra.
\begin{enumerate}
\item An ideal $I$ of $\g$ is said to be nondegenerate if $B_{|I\times I}$ is nondegenerate.
\item The quadratic Hom-Lie algebra  is said to be  irreducible (or $B$-irreducible)  if $\g$ doesn't contain any nondegenerate ideal $I$ such that $I\neq \{0\} $ and $I\neq \g $.
\item Let $I$ be an ideal of $\g$. The orthogonal $I^\bot$ of $I$ with respect to $B$ is defined by $\{x\in \g : B(x,y)=0 \ \forall y\in I\}.$
    \end{enumerate}
\end{definition}
\begin{remark}
Let  $I$ be a nondegenerate ideal of a quadratic Hom-Lie algebra $(\mathfrak{g}, [\cdot, \cdot], \alpha,B)$.  \\ Then  $(I,[\ ,\ ]_{|I\times I},\alpha_|I,B_{|I\times I})$ is a quadratic Hom-Lie algebra.
\end{remark}
\begin{lemma}
Let  $(\g, [\cdot, \cdot], \alpha)$ be a multiplicative   Hom-Lie algebra. Then the center $\mathcal{Z}(\g)$ is an ideal of $\g.$
\end{lemma}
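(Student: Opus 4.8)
The plan is to verify directly that the center $\mathcal{Z}(\g)$ is closed under the bracket with arbitrary elements (trivially, since brackets of central elements vanish) and, more substantively, that it is stable under the twist map $\alpha$. The first condition in the definition of an ideal, namely $[x,y]\in\mathcal{Z}(\g)$ for $x\in\mathcal{Z}(\g)$ and $y\in\g$, is immediate: by the very definition of the center, $[x,y]=0$, and $0$ lies in every subspace. So the content of the lemma is the second closure condition, $\alpha(x)\in\mathcal{Z}(\g)$ whenever $x\in\mathcal{Z}(\g)$.

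First I would fix $x\in\mathcal{Z}(\g)$ and an arbitrary $y\in\g$, and aim to show $[\alpha(x),y]=0$. The key device is multiplicativity. The natural idea is to write $y$ in a form that lets me pull $\alpha$ outside the bracket. Because $\alpha([u,v])=[\alpha(u),\alpha(v)]$ for all $u,v$, if I could express the target bracket as $\alpha$ applied to a bracket involving $x$, the centrality of $x$ would force it to vanish. Concretely, I would compute, for any $z\in\g$,
\begin{equation*}
[\alpha(x),\alpha(z)]=\alpha([x,z])=\alpha(0)=0,
\end{equation*}
using multiplicativity in the first equality and $x\in\mathcal{Z}(\g)$ in the second. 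This shows $\alpha(x)$ brackets trivially against everything in the image $\alpha(\g)$.

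The main obstacle is precisely that the computation above only kills $[\alpha(x),y]$ for $y$ in the image of $\alpha$, whereas to conclude $\alpha(x)\in\mathcal{Z}(\g)$ I need $[\alpha(x),y]=0$ for \emph{all} $y\in\g$. If $\alpha$ is surjective (in particular in the regular or involutive cases, which are the settings emphasized in the paper), every $y$ is of the form $\alpha(z)$ and the argument is complete. I would therefore either invoke surjectivity of $\alpha$ to finish, writing an arbitrary $y=\alpha(z)$ and applying the displayed identity, or, if the lemma is intended for a general multiplicative $\alpha$, I would note that the statement as given implicitly relies on $\alpha$ being an isomorphism onto $\g$. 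The cleanest route under the hypotheses in play is to assume $\alpha$ is onto and conclude: for arbitrary $y=\alpha(z)$ we get $[\alpha(x),y]=[\alpha(x),\alpha(z)]=\alpha([x,z])=0$, so $\alpha(x)\in\mathcal{Z}(\g)$, and together with the trivial bracket condition this establishes that $\mathcal{Z}(\g)$ is an ideal.
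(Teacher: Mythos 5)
Your reduction of the lemma to the claim $\alpha(x)\in\mathcal{Z}(\g)$ for central $x$ is correct, and you have located the real obstruction honestly: multiplicativity alone only yields $[\alpha(x),\alpha(z)]=\alpha([x,z])=0$, i.e.\ $\alpha(x)$ centralizes the image of $\alpha$. But the way you close the gap --- assuming $\alpha$ is surjective --- is not licensed by the hypotheses: the lemma assumes only that the Hom-Lie algebra is multiplicative, not regular or involutive, so what you have proved is strictly weaker than the statement. The paper closes the gap by a completely different mechanism: although the statement reads ``multiplicative Hom-Lie algebra'', the lemma sits inside the subsection on quadratic Hom-Lie algebras, and its proof uses the invariant scalar product $B$. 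For $x\in\mathcal{Z}(\g)$ and arbitrary $y,z\in\g$ one computes
\begin{equation*}
B([\alpha(x),y],z)=B(\alpha(x),[y,z])=B(x,\alpha([y,z]))=B(x,[\alpha(y),\alpha(z)])=B([x,\alpha(y)],\alpha(z))=0,
\end{equation*}
using invariance of $B$, the compatibility $B(\alpha(u),v)=B(u,\alpha(v))$, multiplicativity, invariance again, and finally centrality of $x$; nondegeneracy of $B$ then forces $[\alpha(x),y]=0$ for every $y\in\g$, with no surjectivity of $\alpha$ anywhere.

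This extra structure is genuinely needed, so the gap in your argument is not cosmetic: for a bare multiplicative Hom-Lie algebra the statement is false. Take $\g=\K a\oplus\K b\oplus\K w$ with $[a,w]=-[w,a]=a$ and all other brackets of basis elements zero, and define $\alpha(a)=\alpha(w)=0$, $\alpha(b)=a$. Multiplicativity is immediate, since $\alpha$ annihilates every bracket and $[\alpha(u),\alpha(v)]=0$ for all basis pairs; the Hom-Jacobi cyclic sum is alternating (given skew-symmetry), so it suffices to check it on the triple $(a,b,w)$, where it equals $[\alpha(a),[b,w]]+[\alpha(b),[w,a]]+[\alpha(w),[a,b]]=[a,-a]=0$. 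Here $b\in\mathcal{Z}(\g)$, yet $\alpha(b)=a\notin\mathcal{Z}(\g)$ because $[a,w]=a\neq 0$. So your instinct that the lemma ``implicitly relies'' on an additional assumption is right, but the assumption the paper actually uses is the quadratic structure, not invertibility of $\alpha$; the two repairs are incomparable, yours covering regular non-quadratic algebras and the paper's covering quadratic algebras with $\alpha$ possibly non-surjective, which is the case needed in the later sections on double extensions.
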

\begin{proof}
We  have $[\g,\mathcal{Z}(\g)]=\{0\}\subseteq\mathcal{Z}(\g).$ Let $x\in\mathcal{Z}(\g)$ and $y\in\g$. For any $z\in\g$ the invariance and the symmetry of $B$ leads to
$B([\alpha (x),y],z)=B(\alpha(x),[y,z])=B(x,\alpha ([y,z]))=B(x,[\alpha (y),\alpha(z)])=B([x,\alpha (y)],\alpha(z)])=0$ (since $x\in\mathcal{Z}(\g)$).

Then for any $y\in\g$ we have $[\alpha(x),y]=0$ since $B$ is nondegenerate. Thus $\alpha(x)\in\mathcal{Z}(\g)$.
\end{proof}

\begin{lemma}
Let $(\mathfrak{g}, [\cdot, \cdot], \alpha,B)$ be a quadratic Hom-Lie algebra and $I$ be an ideal of $\g$. Then the orthogonal $I^\bot$ of $I$ with respect to $B$   is an ideal of $\g$.
\end{lemma}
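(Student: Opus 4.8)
The plan is to verify directly the two defining conditions of an ideal (Definition of ideal in Section~\ref{sect1}) for the subspace $I^\bot$: that $I^\bot$ is stable under bracketing with arbitrary elements of $\g$, and that $I^\bot$ is stable under $\alpha$. In both cases the strategy is the same: pair a would-be element against an arbitrary $z\in I$, move the bracket (respectively $\alpha$) across $B$ using invariance (respectively the compatibility \eqref{AlphaComp}), use the ideal property of $I$ to land back inside $I$, and then conclude by orthogonality of the starting element. Nondegeneracy of $B$ is not even needed here, since I work directly with the orthogonality condition defining $I^\bot$.

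First, for bracket-stability I would fix $x\in I^\bot$ and $y\in\g$ and show $[x,y]\in I^\bot$, i.e. $B([x,y],z)=0$ for every $z\in I$. By invariance \eqref{InvariantForm} of $B$ we have $B([x,y],z)=B(x,[y,z])$; since $z\in I$ and $I$ is an ideal, $[y,z]=-[z,y]\in I$, and because $x\in I^\bot$ this forces $B(x,[y,z])=0$. Hence $[x,y]$ is orthogonal to all of $I$, so $[x,y]\in I^\bot$.

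Second, for $\alpha$-stability I would fix $x\in I^\bot$ and show $\alpha(x)\in I^\bot$, i.e. $B(\alpha(x),z)=0$ for every $z\in I$. Here the key is the compatibility condition \eqref{AlphaComp}, which gives $B(\alpha(x),z)=B(x,\alpha(z))$. Since $I$ is an ideal it is stable under $\alpha$, so $\alpha(z)\in I$, and orthogonality of $x$ yields $B(x,\alpha(z))=0$. Thus $\alpha(x)\in I^\bot$, and combining the two steps shows $I^\bot$ is an ideal.

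The argument is essentially formal, so there is no genuine obstacle beyond keeping track of which defining property of $I$ is invoked at each stage. The one point worth flagging is that the $\alpha$-stability of $I^\bot$ really does use the extra axiom \eqref{AlphaComp} (the $B$-self-adjointness of $\alpha$) built into the definition of a quadratic Hom-Lie algebra: without it, $I^\bot$ would still be bracket-stable but need not be $\alpha$-invariant, and hence would fail to be an ideal.
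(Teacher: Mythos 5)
Your proof is correct and follows essentially the same route as the paper: bracket-stability of $I^\bot$ via the invariance \eqref{InvariantForm}, and $\alpha$-stability via the compatibility \eqref{AlphaComp} together with $\alpha(I)\subseteq I$. The paper merely states the bracket-stability step as ``clear,'' whereas you spell it out; the $\alpha$-stability argument is identical.
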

\begin{proof}
It is clear that $[\g ,  I^\bot]\subseteq I^\bot$. Let $y\in I $ and $z\in I^\bot$, then $B(\alpha (y),z)=B(y,\alpha (z))=0$ since $\alpha (I)\subseteq I$. We conclude that $I^\bot$ is an ideal  of $\g.$
\end{proof}
\begin{proposition}
Let $(\mathfrak{g}, [\cdot, \cdot], \alpha,B)$ be a quadratic Hom-Lie algebra. Then $\g=\g_1\oplus \cdots \oplus \g_n$ such that
\begin{enumerate}
\item $\g_i$ is an irreducible ideal of $\g$, for any $i\in\{1,\cdots,n\}$,
\item $B(\g_i,\g_j)=\{0\},$ for any $i,j\in\{1,\cdots,n\}$ such that $i\neq j,$
\item $(\mathfrak{g}_i, [\cdot, \cdot]_{|\g_i\times\g_i}, \alpha_{|\g_i},B_{|\g_i\times\g_i})$ is an irreducible quadratic Hom-Lie algebra.
    \end{enumerate}
\end{proposition}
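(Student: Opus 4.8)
The plan is to argue by induction on $\dim\g$, mimicking the classical orthogonal decomposition of quadratic Lie algebras while keeping track of the twist map $\alpha$. If $\g$ is itself irreducible (in particular when $\dim\g$ is minimal) there is nothing to prove: take $n=1$. Otherwise, by the definition of irreducibility there exists a nondegenerate ideal $I$ with $\{0\}\neq I\neq\g$. By the preceding lemma its orthogonal $I^\bot$ is again an ideal, and the first task is to establish the orthogonal splitting $\g=I\oplus I^\bot$.

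For that splitting I would invoke only linear algebra of the nondegenerate form $B$: since $B_{|I\times I}$ is nondegenerate we have $I\cap I^\bot=\{0\}$, and because $\dim I+\dim I^\bot=\dim\g$ for any nondegenerate $B$ this forces $\g=I\oplus I^\bot$. The same reasoning shows $B_{|I^\bot\times I^\bot}$ is nondegenerate, for if $x\in I^\bot$ is orthogonal to all of $I^\bot$ then it is orthogonal to $I\oplus I^\bot=\g$, whence $x=0$. Thus $I$ and $I^\bot$ are both nondegenerate ideals, and by the Remark each inherits a quadratic Hom-Lie structure $(I,[\ ,\ ]_{|I\times I},\alpha_{|I},B_{|I\times I})$ and likewise for $I^\bot$; here the required $\alpha$-invariance of each summand is precisely the ideal condition $\alpha(I)\subseteq I$. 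The crucial commutation relation $[I,I^\bot]=\{0\}$ follows because $[I,I^\bot]$ lies in both $I$ and $I^\bot$, hence in $I\cap I^\bot=\{0\}$, while property (2) for this two-term splitting is just the defining $B(I,I^\bot)=\{0\}$.

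I would then apply the induction hypothesis to $I$ and to $I^\bot$, obtaining orthogonal decompositions $I=\bigoplus_k I_k$ and $I^\bot=\bigoplus_l J_l$ into irreducible nondegenerate ideals of $I$, respectively of $I^\bot$, and set $\{\g_1,\dots,\g_n\}=\{I_k\}\cup\{J_l\}$. The orthogonality $B(\g_i,\g_j)=\{0\}$ for $i\neq j$ then holds within each family by induction and across the two families because $B(I,I^\bot)=\{0\}$. The point that needs care, and which I expect to be the main obstacle, is upgrading each summand from an ideal of $I$ (or of $I^\bot$) to an ideal of the whole algebra $\g$, and transferring its irreducibility accordingly.

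To settle this I would record the elementary but essential fact that, once $[I,I^\bot]=\{0\}$, any ideal $J$ of $I$ is automatically an ideal of $\g$: writing $x=x_I+x_{I^\bot}\in I\oplus I^\bot$, one has $[x,J]=[x_I,J]+[x_{I^\bot},J]\subseteq J+[I^\bot,I]=J$, while $\alpha(J)\subseteq J$ since $J$ is $\alpha_{|I}$-invariant; the symmetric statement holds with the roles of $I$ and $I^\bot$ exchanged. Conversely, every nondegenerate ideal of $\g$ contained in a summand $\g_i$ is a nondegenerate ideal of $\g_i$. Hence the notions ``nondegenerate ideal of $\g$ lying inside $\g_i$'' and ``nondegenerate ideal of $\g_i$'' coincide, so the irreducibility of each $\g_i$ as a quadratic Hom-Lie algebra, furnished by the induction, is exactly its irreducibility as an ideal of $\g$. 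This yields properties (1) and (3) simultaneously and closes the induction.
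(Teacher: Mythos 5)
Your proposal is correct and follows exactly the route the paper intends: the paper's entire proof is the single line ``By induction on the dimension of $\g$,'' and your argument is precisely that induction carried out in full --- splitting off a proper nondegenerate ideal $I$, establishing $\g=I\oplus I^\bot$ with $[I,I^\bot]=\{0\}$, and using that commutation to promote ideals of the summands to ideals of $\g$ so that irreducibility transfers. The details you supply (in particular the equivalence between ``nondegenerate ideal of $\g$ inside $\g_i$'' and ``nondegenerate ideal of $\g_i$'') are exactly what the paper leaves implicit, and they are sound.
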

\begin{proof}
By induction on the dimension of $\g.$
\end{proof}

Now, let  $\g=(\g,[ \ ,\ ],\alpha,B)$ be a quadratic multiplicative  Hom-Lie algebra.
We provide in the following  some observations.
\begin{proposition}\label{CentreNul}
If the linear map $\alpha$ is an automorphism and the center $\mathcal{Z}(\g)=\{0\}$ then $\alpha$ is an involution i.e. $\alpha^2=id$.
\end{proposition}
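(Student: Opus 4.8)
The plan is to exploit the compatibility $B(\alpha(x),y)=B(x,\alpha(y))$, that is, the fact that $\alpha$ is self-adjoint for the nondegenerate form $B$, together with multiplicativity and invariance, in order to first pin down the eigenvalues of $\alpha$ and then to prove that $\alpha$ is semisimple. First I would decompose $\g=\bigoplus_{\lambda}\g_\lambda$ into generalized eigenspaces of $\alpha$ (possible since $\K$ is algebraically closed), where $\lambda\neq 0$ because $\alpha$ is invertible. Self-adjointness forces $B(\g_\lambda,\g_\mu)=0$ whenever $\lambda\neq\mu$: if $(\alpha-\lambda)^N$ annihilates $x\in\g_\lambda$, then $0=B((\alpha-\lambda)^N x,y)=B(x,(\alpha-\lambda)^N y)$, and for $y\in\g_\mu$ with $\mu\neq\lambda$ the operator $(\alpha-\lambda)^N$ is invertible on $\g_\mu$, so $B(x,\g_\mu)=0$. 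Hence distinct generalized eigenspaces are orthogonal and each $B|_{\g_\lambda\times\g_\lambda}$ is nondegenerate. Multiplicativity, $\alpha([x,y])=[\alpha(x),\alpha(y)]$, yields the grading-type inclusion $[\g_\lambda,\g_\mu]\subseteq\g_{\lambda\mu}$.

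Next I would show every eigenvalue lies in $\{1,-1\}$. Suppose $[x,y]\neq 0$ for some $x\in\g_\lambda$, $y\in\g_\mu$; then $0\neq[x,y]\in\g_{\lambda\mu}$, so nondegeneracy of $B$ on $\g_{\lambda\mu}$ produces $z\in\g_{\lambda\mu}$ with $B([x,y],z)\neq0$. Invariance gives $B([x,y],z)=B(x,[y,z])$, where $[y,z]\in\g_{\lambda\mu^2}$, and orthogonality of distinct eigenspaces forces $\lambda=\lambda\mu^2$, i.e. $\mu^2=1$; by skew-symmetry $\lambda^2=1$ as well. Consequently, if $\lambda^2\neq1$ then $[\g_\lambda,\g]=0$, so $\g_\lambda\subseteq\mathcal{Z}(\g)=\{0\}$ and $\g_\lambda=\{0\}$. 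Therefore $\g=\g_1\oplus\g_{-1}$.

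It remains to rule out nontrivial Jordan blocks, which I expect to be the main obstacle. Consider the multiplicative Jordan decomposition $\alpha=\alpha_s\alpha_u$. Since the eigenvalues are $\pm1$, $\alpha_s$ acts as $\pm\mathrm{id}$ on $\g_{\pm1}$, so $\alpha_s^2=\mathrm{id}$ and $\alpha^2=\alpha_u^2$; thus it suffices to prove $\alpha_u=\mathrm{id}$. Both $\alpha_s$ and $\alpha_u$ are automorphisms of $(\g,[\ ,\ ])$ and, being polynomials in $\alpha$, are self-adjoint for $B$. Then $D:=\log\alpha_u$ is a nilpotent derivation of $\g$ that is again self-adjoint. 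The decisive computation combines the derivation identity $D[x,y]=[Dx,y]+[x,Dy]$, self-adjointness $B(Dw,z)=B(w,Dz)$, and invariance. Evaluating $B(D[x,y],z)$ in two ways, once by the derivation rule and invariance and once by self-adjointness and invariance, gives
\[
B(Dx,[y,z])+B(x,[Dy,z])=B(x,[y,Dz]),
\]
and rewriting $B(Dx,[y,z])=B(x,D[y,z])=B(x,[Dy,z])+B(x,[y,Dz])$ reduces this to
\[
2\,B(x,[Dy,z])=0 \quad\text{for all } x.
\]
Nondegeneracy then yields $[Dy,z]=0$ for all $y,z$, so $D\g\subseteq\mathcal{Z}(\g)=\{0\}$ and $D=0$. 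Hence $\alpha_u=\exp(D)=\mathrm{id}$ and $\alpha^2=\mathrm{id}$.

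I would flag the semisimplicity step as the crux. The eigenvalue analysis is a direct grading computation using orthogonality and centrality, whereas ruling out Jordan blocks rests on the observation that a self-adjoint derivation of a centerless quadratic Hom-Lie algebra must vanish; it is precisely here that invariance, self-adjointness and the hypothesis $\mathcal{Z}(\g)=\{0\}$ combine, and where one must justify carefully that the Jordan components of $\alpha$ remain multiplicative automorphisms and self-adjoint before taking $D=\log\alpha_u$.
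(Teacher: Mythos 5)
Your proof is correct, but it follows a genuinely different and much heavier path than the paper's. The paper disposes of the proposition with a single six-step chain of identities, alternating invariance, the symmetry $B(\alpha(x),y)=B(x,\alpha(y))$, and multiplicativity:
\begin{align*}
B([\alpha(x),y],z)&=B(\alpha(x),[y,z])=B(x,\alpha([y,z]))=B(x,[\alpha(y),\alpha(z)])\\
&=B([x,\alpha(y)],\alpha(z))=B(\alpha([x,\alpha(y)]),z)=B([\alpha(x),\alpha^{2}(y)],z),
\end{align*}
whence $[\alpha(x),(\mathrm{id}-\alpha^{2})(y)]=0$ for all $x,y$ by nondegeneracy, and surjectivity of $\alpha$ gives $\mathrm{Im}(\mathrm{id}-\alpha^{2})\subseteq\mathcal{Z}(\g)=\{0\}$; note this uses only surjectivity of $\alpha$ and places no restriction on $\dim\g$. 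Your route --- generalized eigenspace decomposition, $B$-orthogonality of distinct eigenspaces, the grading $[\g_\lambda,\g_\mu]\subseteq\g_{\lambda\mu}$ forcing all eigenvalues into $\{1,-1\}$, then multiplicative Jordan decomposition and the vanishing of the $B$-symmetric nilpotent derivation $D=\log\alpha_u$ --- is sound, and the steps you rightly flag are standard facts: the Jordan components of an automorphism of any finite-dimensional algebra are again automorphisms (in your setting this also follows directly from your grading, since $\alpha_s$ acts by the scalar $\lambda\mu$ on $[\g_\lambda,\g_\mu]\subseteq\g_{\lambda\mu}$), and the logarithm of a unipotent automorphism is a derivation in characteristic $0$. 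What your approach buys is structural information the short proof hides: $\alpha$ is semisimple, $\g=\g_1\oplus\g_{-1}$ is a $B$-orthogonal $\Z_2$-splitting, and the standalone lemma that a $B$-symmetric derivation of a centerless quadratic Hom-Lie algebra must vanish --- your ``decisive computation'', which is precisely the infinitesimal analogue of the paper's identity chain. What it costs is finite-dimensionality, which your argument needs essentially (eigenspace and Jordan decompositions, nilpotency of $\log\alpha_u$) and which the paper's computation never invokes; since the paper does not explicitly assume $\dim\g<\infty$ in this proposition, you should state that hypothesis if you keep your version.
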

\begin{proof}
Let $x,y,z\in \g$, we have
\begin{eqnarray*}
B([\alpha (x),y],z)&&=B(\alpha (x),[y,z])=B(x,\alpha ([y,z])
\\ \ &&=B(x,[\alpha (y),\alpha (z)])=B([x,\alpha (y)],\alpha (z))
\\ \ &&=B(\alpha ([x,\alpha (y)]),z)=B([\alpha(x),\alpha^2(y)],z).
\end{eqnarray*}
Then $B([\alpha (x),y]-[\alpha(x),\alpha^2(y)],z)=0$ which may be written $B([\alpha (x),y-\alpha^2(y)],z)=0$. Hence, for any $x,y\in\g$ we have $[\alpha(x),(id-\alpha^2)(y)]=0$. Since $\alpha$ is bijective and $\mathcal{Z}(\g)=\{0\}$ then $\alpha^2=id$.
\end{proof}
\begin{proposition}
There exist two nondegenerate  ideals $I,J$  of  $\g=(\g,[ \ ,\ ],\alpha,B)$ such that
\begin{enumerate}
\item $B(I,J)=\{0\}$,
\item $\g=I\oplus J$,
\item $\alpha_{|I}$ is nilpotent and $\alpha_{|J}$ is invertible.
\end{enumerate}
\end{proposition}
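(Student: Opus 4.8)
The plan is to take the Fitting decomposition of $\g$ relative to the endomorphism $\alpha$. Assuming as throughout that $\g$ is finite-dimensional, I would choose $N$ large enough that $\ker\alpha^N=\ker\alpha^{N+1}$ and $\alpha^N(\g)=\alpha^{N+1}(\g)$, and set
$$I=\ker\alpha^N, \qquad J=\alpha^N(\g).$$
The standard Fitting lemma then gives $\g=I\oplus J$ with $\alpha_{|I}$ nilpotent and $\alpha_{|J}$ invertible, which already yields items (2) and (3). Both subspaces are visibly $\alpha$-invariant, so what remains is to prove that they are ideals, that they are $B$-orthogonal, and that $B$ restricts nondegenerately to each.

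First I would check that $I$ is an ideal. Invariance under $\alpha$ is immediate, and for the bracket I would invoke multiplicativity: for $x\in I$ and $y\in\g$ the identity $\alpha^N([x,y])=[\alpha^N(x),\alpha^N(y)]=[0,\alpha^N(y)]=0$ shows $[x,y]\in\ker\alpha^N=I$. Next I would identify $J$ with $I^\bot$. Since the compatibility condition \eqref{AlphaComp} makes $\alpha$ self-adjoint for $B$, the iterate $\alpha^N$ is self-adjoint as well, so that $v\in(\alpha^N(\g))^\bot$ precisely when $B(\alpha^N(v),\g)=0$, i.e. when $\alpha^N(v)=0$; hence $(\alpha^N(\g))^\bot=\ker\alpha^N=I$. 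Taking orthogonals in the nondegenerate form $B$ (using $W^{\bot\bot}=W$ in finite dimension) gives $J=\alpha^N(\g)=I^\bot$. In particular $B(I,J)=\{0\}$, which is item (1), and $J$ is an ideal by the lemma asserting that the orthogonal of an ideal is again an ideal.

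Finally I would deduce nondegeneracy of the two restrictions. From $J=I^\bot$ and $I=J^\bot$, any $x\in I$ with $B(x,I)=\{0\}$ also satisfies $B(x,J)=\{0\}$, hence $B(x,\g)=\{0\}$ and $x=0$ by nondegeneracy of $B$; thus $B_{|I\times I}$ is nondegenerate, and symmetrically for $B_{|J\times J}$. This exhibits $I$ and $J$ as nondegenerate ideals and establishes all the asserted properties. The step requiring genuine care, rather than routine bookkeeping, is the identification $J=I^\bot$: it is exactly there that the two hypotheses are used together — multiplicativity to make $I$ an ideal, and the self-adjointness coming from \eqref{AlphaComp} to match the Fitting image with the orthogonal of the Fitting kernel — so that is where I would focus the argument.
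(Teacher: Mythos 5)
Your proof is correct, and it shares the paper's starting point: the Fitting decomposition $I=\ker\alpha^N$, $J=\alpha^N(\g)$, with multiplicativity giving $\alpha^N([x,y])=[\alpha^N(x),\alpha^N(y)]=0$, so that $I$ is an ideal. The second half, however, runs genuinely differently. The paper works on $J$ by hand: it shows $[J,J]\subseteq J$ by writing elements of $J$ as $\alpha^n$-images and using multiplicativity, proves $[I,J]=\{0\}$ by an invariance-plus-nondegeneracy computation, and checks $B(I,J)=\{0\}$ by moving $\alpha^n$ across $B$ via \eqref{AlphaComp}; it never explicitly verifies that the restrictions of $B$ to $I$ and $J$ are nondegenerate. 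You instead prove the single identity $J^\bot=\ker\alpha^N=I$ (self-adjointness of $\alpha^N$, inherited from \eqref{AlphaComp}, plus nondegeneracy of $B$), deduce $I^\bot=(J^\bot)^\bot=J$ from the double-orthogonal property, and then everything else is general quadratic bookkeeping: the orthogonality $B(I,J)=\{0\}$ is immediate, $J$ is an ideal by the paper's earlier lemma that the orthogonal of an ideal is an ideal, and nondegeneracy of both restrictions follows from $\g=I\oplus J$ together with $B(I,J)=\{0\}$. What your route buys is economy and completeness: one linear-algebra identity replaces three separate bracket computations, and you explicitly close the nondegeneracy step that the paper leaves tacit. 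What the paper's route records in addition is the structural fact $[I,J]=\{0\}$; in your argument this also comes for free, since once $I$ and $J$ are both ideals one has $[I,J]\subseteq I\cap J=\{0\}$.
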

\begin{proof}
The fitting decomposition with respect to the linear map $\alpha$ leads to the existence of an integer $n$ such that $\g=I\oplus J$, where $I=Ker( \alpha^n)$ and $J=Im ( \alpha^n)$, satisfying
\begin{itemize}
\item $\alpha (I)\subseteq I$,
\item $\alpha (J)\subseteq I$,
\item $\alpha_{|I}$ is nilpotent,
\item $\alpha_{|J}$ is invertible.
\end{itemize}
Let $x\in \g$, $y\in I$. We have $\alpha^n([x,y])=[\alpha^n(x),\alpha^n(y)]=0$ since $\alpha^n(y)=0$, and  $[x,y]\in I$. Then $[\g,I]\subseteq I$. In addition $\alpha^n (\alpha(y))=\alpha^{n+1}(y)=0$ which implies that $\alpha (y)\in Ker (\alpha^n)$.
Therefore $I$ is an ideal of $\g$.

Let  $x,y\in J$ then there exist $x',y'\in\g$ such that $x=\alpha^n (x')$ and $y=\alpha^n (y')$.  We have $[x,y]=[\alpha^n(x'),\alpha^n(y')]=\alpha^n([x',y'])\in J.$
 In addition $\alpha (J)\subseteq J$. Therefore $J$ is a subalgebra.

 Let $x\in I$ and $y\in J$ . There exists $y'\in\g$ such that  $y=\alpha^n (y')$. For any $z\in\g$, we have
 $B([x,y],z)=-B([y,x],z)=-B(y,[x,z])=-B(\alpha^n (y'),[x,z])=-B(y',\alpha^n ([x,z])=-B(y',[\alpha^n (x),\alpha^n (z)])=0$. Then $[x,y]=0,$ since $B$ is a nondegenerate bilinear form.
 We conclude that $I=Im(\alpha^n)$ is an ideal of $\g$ and $[I,J]=0.$

 Now let $x\in I$ and $y=\alpha^n (y')\in J$, where $y'\in\g$. We have $B(x,y)=B(x,\alpha^n (y'))=B(\alpha^n (x),y')=0$ since $\alpha^n (x)=0$. Therefore $B(I,J)=0.$
\end{proof}

\begin{corollary}
Let $(\mathfrak{g}, [\cdot, \cdot], \alpha,B)$ be a quadratic Hom-Lie algebra which is $B$-irreducible. Then either $\alpha$ is nilpotent or $\alpha$ is an automorphism of $\g$.
\end{corollary}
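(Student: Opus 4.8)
The plan is to read the corollary directly off the immediately preceding proposition, using only the definition of $B$-irreducibility. First I would apply that proposition to obtain the two nondegenerate ideals furnished by the Fitting decomposition, namely $I=Ker(\alpha^n)$ and $J=Im(\alpha^n)$ for a suitable integer $n$, which satisfy $\g=I\oplus J$, $B(I,J)=\{0\}$, with $\alpha_{|I}$ nilpotent and $\alpha_{|J}$ invertible.

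The decisive step is to notice that $I$ and $J$ are both \emph{nondegenerate} ideals of $\g$. Since $\g$ is $B$-irreducible, by definition it contains no nondegenerate ideal distinct from $\{0\}$ and $\g$; hence $I\in\{\{0\},\g\}$ and $J\in\{\{0\},\g\}$. Because the sum $\g=I\oplus J$ is direct, these possibilities cannot hold simultaneously for both summands unless $\g=\{0\}$: if both equalled $\g$ the sum would fail to be direct, and if both equalled $\{0\}$ then $\g$ would be trivial. Therefore, assuming $\g\neq\{0\}$, exactly one of $I,J$ equals $\g$ and the other equals $\{0\}$.

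It then remains to translate this dichotomy into the statement. If $I=\g$ (so $J=\{0\}$), then $\alpha=\alpha_{|I}$ is nilpotent. If $J=\g$ (so $I=\{0\}$), then $\alpha=\alpha_{|J}$ is invertible, and since $\g$ is multiplicative the bijective map $\alpha$ satisfies $\alpha([x,y])=[\alpha(x),\alpha(y)]$, so it is an automorphism of $\g$. I do not expect a genuine obstacle here, as the whole analytic content has already been packaged into the Fitting-decomposition proposition. The only points requiring care are verifying that the alternative is exclusive by exploiting the direct-sum structure, and observing that the passage from \emph{invertible} to \emph{automorphism} relies on multiplicativity, which is the ambient hypothesis in this subsection.
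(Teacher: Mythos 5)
Your proposal is correct and is essentially the argument the paper intends: the corollary is stated as an immediate consequence of the preceding Fitting-decomposition proposition, whose two nondegenerate ideals $I$ and $J$ must, by $B$-irreducibility, be $\{0\}$ and $\g$ in one order or the other, giving nilpotent or invertible $\alpha$. Your added remarks — that the dichotomy is exclusive because the sum is direct, and that invertibility upgrades to "automorphism" via the ambient multiplicativity hypothesis — are exactly the (unstated) details the paper relies on.
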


\section{Construction of Quadratic Hom-Lie algebras and Examples } \label{sect2}
We show in the following
some constructions leading to new  examples of quadratic Hom-Lie algebras. We
use  Theorem \ref{thmYauConstrHomLie} and Theorem \ref{ThmConstrNthDerivedLie} to provide some classes
of quadratic Hom-Lie algebras starting from an ordinary  quadratic
Lie algebras, respectively from any multiplicative quadratic Hom-Lie algebra. Also we provide constructions using elements in the centroid of a Lie algebras and constructions of $T^*$-extension type.

Let $(\g,[~,~],B)$ be a quadratic Lie algebra. We denote by
$Aut_S(\mathfrak{g},B)$ the set of symmetric  automorphisms of
$ \mathfrak{g}$ with respect of $B$, that is automorphisms
$f:\g\rightarrow \g$   such that $B(f(x),y)=B(x,f(y))$, $\forall x,y\in \g.$

\begin{proposition}
Let $(\g,[~,~],B)$ be a quadratic Lie algebra and
$\alpha \in Aut_S(\mathfrak{g},B)$.

Then $\mathfrak{g}_\alpha=(\g,[~,~]_\alpha,\alpha,B_\alpha)$, where
for any $x,y\in \g$
\begin{eqnarray}[x,y]_\alpha=[\alpha(x),\alpha(y)]\\
B_\alpha(x,y)=B(\alpha(x),y), \end{eqnarray} is a quadratic Hom-Lie
algebra.
\end{proposition}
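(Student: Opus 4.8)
The plan is to verify, one by one, the four requirements of Definition \ref{def:QuadraticHomLie} for the quadruple $\g_\alpha=(\g,[\ ,\ ]_\alpha,\alpha,B_\alpha)$, handling the Hom-Lie structure and the bilinear form separately. The guiding principle is that almost everything follows from two facts about $\alpha$: that it is a Lie algebra automorphism (so it commutes with the bracket), and that it is symmetric with respect to $B$ (so it can be moved across the two slots of $B$).

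First I would dispose of the Hom-Lie axioms by reducing to Yau's construction, Theorem \ref{thmYauConstrHomLie}. The key observation is that, since $\alpha$ is an automorphism of $(\g,[\ ,\ ])$, it is in particular an endomorphism and satisfies $\alpha([x,y])=[\alpha(x),\alpha(y)]$. Hence $[x,y]_\alpha=[\alpha(x),\alpha(y)]=\alpha([x,y])$, i.e. the twisted bracket is exactly $\alpha\circ[\ ,\ ]$. Theorem \ref{thmYauConstrHomLie} then yields at once that $(\g,[\ ,\ ]_\alpha,\alpha)$ is a Hom-Lie algebra, so no separate skew-symmetry or Hom-Jacobi computation is needed.

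Next I would check that $B_\alpha$ is a symmetric nondegenerate bilinear form. Bilinearity is immediate. For symmetry I use $\alpha\in Aut_S(\g,B)$: $B_\alpha(x,y)=B(\alpha(x),y)=B(x,\alpha(y))=B(\alpha(y),x)=B_\alpha(y,x)$. For nondegeneracy, if $B_\alpha(x,y)=0$ for all $y$, then $B(\alpha(x),y)=0$ for all $y$, so $\alpha(x)=0$ by nondegeneracy of $B$, whence $x=0$ since $\alpha$ is bijective.

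Finally I would verify the compatibility \eqref{AlphaComp} of $B_\alpha$ with $\alpha$ and the invariance of $B_\alpha$ with respect to the twisted bracket. For the compatibility, applying the symmetry of $\alpha$ twice gives $B_\alpha(\alpha(x),y)=B(\alpha^2(x),y)=B(\alpha(x),\alpha(y))=B_\alpha(x,\alpha(y))$. The invariance is the one place that chains the two hypotheses on $\alpha$, and I expect it to be the main (though only mildly delicate) obstacle: starting from $B_\alpha([x,y]_\alpha,z)=B(\alpha([\alpha(x),\alpha(y)]),z)$, I would first move the outer $\alpha$ across $B$ by symmetry to obtain $B([\alpha(x),\alpha(y)],\alpha(z))$, then apply the invariance of the original form $B$ to reach $B(\alpha(x),[\alpha(y),\alpha(z)])=B(\alpha(x),[y,z]_\alpha)=B_\alpha(x,[y,z]_\alpha)$. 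Once the reduction to Theorem \ref{thmYauConstrHomLie} is made and this single chain is carried out, the proof is complete and short.
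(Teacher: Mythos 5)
Your proof is correct and follows essentially the same route as the paper's: both invoke Theorem \ref{thmYauConstrHomLie} for the Hom-Lie structure (using $[x,y]_\alpha=\alpha([x,y])$ since $\alpha$ is an automorphism), both get nondegeneracy of $B_\alpha$ from bijectivity of $\alpha$, and both verify invariance and the compatibility \eqref{AlphaComp} by the identical chains of equalities. Your explicit check of the symmetry of $B_\alpha$ is a small point the paper leaves implicit, but it is not a different approach.
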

\begin{proof}The triple $(\g,[~,~]_\alpha,\alpha)$ is a Hom-Lie algebra
by Theorem \ref{thmYauConstrHomLie}.

The linear form $B_\alpha$ is nondegenerate since $B$ is
nondegenerate and $\alpha$ bijective.

We show that the identity (\ref{InvariantForm}) is satisfied by
$\mathfrak{g}_\alpha=(\g,[~,~]_\alpha,\alpha,B_\alpha)$. Let
$x,y,z\in \g$, then
\begin{eqnarray*}
B_\alpha([x,y]_\alpha,z)&=&B(\alpha([\alpha(x),\alpha(y)]),z)
\\ \
&=& B([\alpha(x),\alpha(y)],\alpha(z))
\\ \
&=& B(\alpha(x),[\alpha(y),\alpha(z)]) \quad (\text{Invariance of } B)
\\ \
&=& B(\alpha(x),[y,z]_\alpha)
\\ \
&=& B_\alpha(x,[y,z]_\alpha).
\end{eqnarray*}
Therefore $B_\alpha$ is invariant.

We have $\alpha \in Aut_S(\mathfrak{g}_\alpha,B_\alpha)$. Indeed
$$\alpha ([x,y]_\alpha)=\alpha ([\alpha (x),\alpha (y)])=
[\alpha^2 (x),\alpha^2 (y)]=[\alpha (x),\alpha (y)]_\alpha,$$ and
$$B_\alpha(\alpha(x),y)=B(\alpha(\alpha(x)),y)=B(\alpha(x),\alpha(y))=
B_\alpha(x,\alpha(y)).$$
\end{proof}

The following theorem permits to obtain new quadratic Hom-Lie algebras starting from a multiplicative quadratic  Hom-Lie algebra.
\begin{proposition}
Let  $\left( \g,[\ ,\ ] ,\alpha,B \right) $ be a multiplicative quadratic  Hom-Lie algebra. For any $n\geq 0$, the  quadruple
\begin{equation}\label{QuadraticDerivedHomAlgtype1}
\g _{(n)}=\left( \g,[\ ,\ ]^{(n)}=\alpha^{n }\circ[\ ,\ ] ,\alpha^{n+1}, B_{\alpha^n}\right),
\end{equation}
where $B_{\alpha^n}$ is defined for $x,y\in\g$ by $B_{\alpha^n}(x,y)=B(\alpha^n (x),y)$, determine a multiplicative quadratic Hom-Lie algebra.
\end{proposition}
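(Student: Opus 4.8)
The plan is to verify the four defining properties of a quadratic Hom-Lie algebra for the quadruple $\g_{(n)}$, relying on what is already known about the underlying twisted bracket. That $(\g,[\ ,\ ]^{(n)},\alpha^{n+1})$ is a Hom-Lie algebra is exactly Theorem \ref{ThmConstrNthDerivedLie}, so no work is needed there. Multiplicativity of $\alpha^{n+1}$ for the new bracket is a one-line check: using the iterated multiplicativity of $\alpha$, both $\alpha^{n+1}([x,y]^{(n)})$ and $[\alpha^{n+1}(x),\alpha^{n+1}(y)]^{(n)}$ reduce to $\alpha^{2n+1}([x,y])$. Hence the only remaining task is to establish that $B_{\alpha^n}$ is symmetric, nondegenerate, invariant for $[\ ,\ ]^{(n)}$, and compatible with $\alpha^{n+1}$ in the sense of \eqref{AlphaComp}.

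First I would record two elementary facts used throughout: by induction on $k$, multiplicativity gives $\alpha^{k}([x,y])=[\alpha^{k}(x),\alpha^{k}(y)]$, and \eqref{AlphaComp} gives $B(\alpha^{k}(x),y)=B(x,\alpha^{k}(y))$, for all $k\geq 0$. Symmetry and $\alpha^{n+1}$-compatibility of $B_{\alpha^n}$ then follow immediately: for symmetry, $B_{\alpha^n}(x,y)=B(\alpha^{n}(x),y)=B(x,\alpha^{n}(y))=B(\alpha^{n}(y),x)=B_{\alpha^n}(y,x)$ by iterated compatibility and the symmetry of $B$; and for \eqref{AlphaComp}, both $B_{\alpha^n}(\alpha^{n+1}(x),y)$ and $B_{\alpha^n}(x,\alpha^{n+1}(y))$ collapse to $B(\alpha^{2n+1}(x),y)$.

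Next I would check the invariance of $B_{\alpha^n}$ with respect to $[\ ,\ ]^{(n)}$. The left-hand side is $B_{\alpha^n}([x,y]^{(n)},z)=B(\alpha^{2n}([x,y]),z)$, while starting from the right-hand side,
\begin{align*}
B_{\alpha^n}(x,[y,z]^{(n)}) &= B(\alpha^{n}(x),\alpha^{n}([y,z])) = B(\alpha^{n}(x),[\alpha^{n}(y),\alpha^{n}(z)])\\
&= B([\alpha^{n}(x),\alpha^{n}(y)],\alpha^{n}(z)) = B(\alpha^{n}([x,y]),\alpha^{n}(z)) = B(\alpha^{2n}([x,y]),z),
\end{align*}
using in turn iterated multiplicativity, the invariance \eqref{InvariantForm} of $B$, multiplicativity again, and iterated compatibility. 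The two sides agree, so $B_{\alpha^n}$ is invariant for the new bracket.

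The one genuinely delicate point, and the step I expect to be the main obstacle, is nondegeneracy of $B_{\alpha^n}$. Since $B$ is nondegenerate, the radical of $B_{\alpha^n}$ equals $\{x : B(\alpha^n(x),y)=0 \ \forall y\}=\ker(\alpha^{n})$. Thus $B_{\alpha^n}$ is nondegenerate if and only if $\alpha^{n}$ is injective, i.e. $\alpha$ is injective (equivalently invertible in finite dimension). In the Fitting decomposition $\g=\ker(\alpha^{m})\oplus \mathrm{Im}(\alpha^{m})$ discussed earlier, any nonzero nilpotent part of $\alpha$ forces $B_{\alpha^n}$ to be degenerate for $n\geq 1$. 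I would therefore either add the hypothesis that $\alpha$ is invertible (consistent with the regular/involutive cases that are the focus of the paper) or restrict the nondegeneracy claim accordingly; granting injectivity of $\alpha$, nondegeneracy is automatic and, together with the symmetry, invariance, compatibility, and multiplicativity established above, completes the proof.
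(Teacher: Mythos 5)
Your argument is correct and, in its computational core, identical to the paper's: symmetry and the condition \eqref{AlphaComp} follow from iterated $B$-symmetry of $\alpha$, invariance follows from multiplicativity together with the invariance of $B$, and nondegeneracy is reduced to bijectivity of $\alpha^n$ — exactly the paper's steps. Where you differ is in being more careful, and on the key point you are right where the paper is sloppy: the paper's proof of nondegeneracy begins ``Since $\alpha\in Aut(\g)$, by induction $\alpha^n\in Aut(\g)$,'' but invertibility of $\alpha$ is nowhere among the hypotheses of the proposition — in Definition \ref{defMultRegInvo}, \emph{multiplicative} only means $\alpha([x,y])=[\alpha(x),\alpha(y)]$, while invertibility is the separate notion of \emph{regular}. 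Your identification of the radical of $B_{\alpha^n}$ with $\ker(\alpha^n)$ pinpoints exactly what fails: an abelian $\g$ with a non-injective $B$-symmetric $\alpha$ is a multiplicative quadratic Hom-Lie algebra for which $B_{\alpha^n}$ is degenerate for every $n\geq 1$, so the injectivity (or invertibility) hypothesis you propose to add is genuinely necessary, not merely convenient; in effect you have found and repaired a gap in the statement itself. Two further small points in your favour: you verify \eqref{AlphaComp} for the actual twist map $\alpha^{n+1}$ of $\g_{(n)}$, whereas the paper checks it for $\alpha^{n}$ instead (an inconsequential but real slip), and you check multiplicativity of $\alpha^{n+1}$ with respect to $[\ ,\ ]^{(n)}$ explicitly, which the paper leaves to its citation of Theorem \ref{ThmConstrNthDerivedLie} even though that theorem, as stated, only asserts the Hom-Lie structure.
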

\begin{proof}
The triple $\g _{(n)}=\left( \g,[\ ,\ ]^{(n)}=\alpha^{n }\circ[\ ,\ ] ,\alpha^{n+1} \right)$ is a Hom-Lie algebra by Theorem \ref{ThmConstrNthDerivedLie}.

Since $\alpha \in Aut(\g )$ by induction we have $\alpha^n \in Aut(\g )$. The bilinear form $B_{\alpha^n}$ is nondegenerate because $B$ is nondegenerate and $\alpha^n$ is bijective. It is  is symmetric. Indeed
$$B_{\alpha^n}(x,y)=B(\alpha^n (x),y)=B(x, \alpha^n (y))=B( \alpha^n (y),x)=B_{\alpha^n}(y,x).$$

The invariance of $B_{\alpha^n}$ is given by
\begin{align*}
B_{\alpha^n}([x,y]^n, z)&=B(\alpha^n \circ\alpha^n([x,y]), z)=B( \alpha^n([x,y]),\alpha^n( z))=B( [\alpha^n(x),\alpha^n(y)],\alpha^n( z))\\
\ &=B( \alpha^n(x),[\alpha^n(y),\alpha^n( z)])=B( \alpha^n(x),\alpha^n([y,z]))=B_{\alpha^n}(x,[y, z]^n).
\end{align*}
We have also $B_{\alpha^n}(\alpha ^n (x),y)=B_{\alpha^n}(x,\alpha ^n (y))$, indeed
 $$B_{\alpha^n}(\alpha ^n (x),y)=B(\alpha ^{2n} (x),y)=B(\alpha ^{n} (x),\alpha ^{n} (y))=B_{\alpha^n}(x,\alpha ^n (y)).$$
\end{proof}
We provide here a construction a   Hom-Lie algebra $\mathcal{L}$ which is a generalization of the  trivial $T^*$-extension   introduced by M. Bordemann \cite{Bordemann97} and also the double extension of $\{0\}$ by $\mathcal{L}$ see \cite{MedinaRevoy}.
\begin{proposition}
Let $(\g,[\ ,\ ]_\mathfrak{g} )$ be a Lie algebra and  $\g^*$ be the underlying  dual vector space. The vector space $\mathcal{L}=\g\oplus \g^*$ equipped with the following  product
\begin{equation}\
[ \ ,\ ]: \begin{array}{c}
 \mathcal{L} \times \mathcal{L} \rightarrow \mathcal{L}\\
(x+f,y+h)\mapsto [x,y]_{\mathfrak{g}}+f\circ ady-h \circ adx
\end{array}
\end{equation}
and a bilinear form
\begin{equation}
B: \begin{array}{c}
\mathcal{L}\times\mathcal{L}\rightarrow \K \\
(x+f,y+h)\mapsto f (y)+h(x)
\end{array}
\end{equation}
is a quadratic Lie algebra, which we denote by $\mathcal{L}.$
\end{proposition}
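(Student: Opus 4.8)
The plan is to verify three things in order: that $[\ ,\ ]$ defines a Lie bracket on $\mathcal{L}=\g\oplus\g^*$, that $B$ is a symmetric nondegenerate bilinear form, and that $B$ is invariant with respect to the bracket. Throughout I will use the natural action of $\g$ on $\g^*$ by the coadjoint representation, writing $f\circ\ad y$ for the linear form $z\mapsto f([y,z])$; this is the dual of the adjoint action and is the standard ingredient in a $T^*$-extension. I would emphasize at the outset that the $\g^*$-part is abelian (two forms bracket to zero) and that $[\g,\g^*]\subseteq\g^*$, so $\g^*$ is an abelian ideal, which organizes the whole computation.

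First I would check skew-symmetry and the Jacobi identity. Skew-symmetry is immediate from the defining formula since $[x,y]_\g$ is skew and the cross terms $f\circ\ad y-h\circ\ad x$ switch sign under exchange. For the Jacobi identity I would split the bracket of three elements $x+f,\,y+h,\,z+k$ into its $\g$-component and its $\g^*$-component. The $\g$-component is just the Jacobi identity in $\g$ and vanishes. The $\g^*$-component, after cyclic summation, is a sum of terms of the form $k\circ\ad[x,y]_\g$ and $(f\circ\ad y)\circ\ad z$; here the key algebraic fact is that the coadjoint action is a genuine representation of $\g$ on $\g^*$, i.e. $f\circ\ad[x,y]_\g=(f\circ\ad x)\circ\ad y-(f\circ\ad y)\circ\ad x$, which is precisely the dualization of the Jacobi identity in $\g$. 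Feeding this in, the cyclic sum of the $\g^*$-component collapses to zero.

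Next, symmetry of $B$ is clear from $B(x+f,y+h)=f(y)+h(x)=h(x)+f(y)=B(y+h,x+f)$. For nondegeneracy I would argue that if $x+f$ is orthogonal to all of $\mathcal{L}$, then pairing against elements $0+h$ for arbitrary $h\in\g^*$ forces $h(x)=0$ for all $h$, hence $x=0$; pairing against $y+0$ for arbitrary $y\in\g$ then forces $f(y)=0$ for all $y$, hence $f=0$. Finally, for invariance I would compute $B([x+f,y+h],z+k)$ and $B(x+f,[y+h,z+k])$ separately and match them. The former expands to $k([x,y]_\g)+(f\circ\ad y-h\circ\ad x)(z)$, and the latter to $(f\circ\ad z)(\,\cdot\,)$-type terms evaluated appropriately; unwinding both via the definition $f\circ\ad y\,(z)=f([y,z]_\g)$ and using invariance of the canonical pairing together with the Jacobi identity in $\g$ shows the two sides agree.

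The main obstacle is bookkeeping in the Jacobi and invariance verifications: one must keep the $\g$-valued and $\g^*$-valued components rigorously separated and correctly track in which slot each adjoint operator acts. The single conceptual point that makes everything work is that the coadjoint map $y\mapsto f\circ\ad y$ defines a representation of $\g$, so that the cross terms in both the Jacobi identity and the invariance identity are controlled by the Jacobi identity of $\g$ itself rather than requiring any new hypothesis. No new structural input beyond $(\g,[\ ,\ ]_\g)$ being a Lie algebra is needed, which is why this construction is universal.
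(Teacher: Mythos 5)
Your proof is correct. The paper gives no proof of this proposition at all --- it records the construction as the known trivial $T^*$-extension of Bordemann \cite{Bordemann97} (equivalently, the double extension of $\{0\}$ by $\mathcal{L}$, cf. \cite{MedinaRevoy}) --- so there is nothing in the text to compare against; your verification is the standard one, and the key step is right: the Jacobi identity of the new bracket splits into a $\g$-component (Jacobi in $\g$) and a $\g^*$-component that collapses precisely because $y\mapsto f\circ\ad y$ dualizes to a representation, i.e. $f\circ\ad[x,y]_\g=(f\circ\ad x)\circ\ad y-(f\circ\ad y)\circ\ad x$. One small refinement: in the invariance check the two expanded sides, $k([x,y]_\g)+f([y,z]_\g)-h([x,z]_\g)$ and $f([y,z]_\g)+h([z,x]_\g)-k([y,x]_\g)$, already agree using only skew-symmetry of $[\ ,\ ]_\g$, so the appeal to the Jacobi identity of $\g$ at that point is unnecessary (though harmless).
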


 In the sequel we denote $\mathcal{L}$ by $T^*(\mathfrak{g})$ and $B$ by $B_0$.

\begin{theorem}\label{Thm1}
Let $(\g,[\ ,\ ])$ be a Lie algebra and $\alpha \in Aut(\mathfrak{g}).$ Then the endomorphism $\Omega := \alpha + \ ^t\alpha $ of $T^*(\mathfrak{g})$ is a symmetric automorphism of $T^*(\mathfrak{g})$ with respect to $B_0$ if and only if $Im(\alpha^2-id)\subseteq \mathcal{Z}(\mathfrak{g})$,  where $\mathcal{Z}(\mathfrak{g})$ is the center of $\mathfrak{g}$.

Hence, if  $Im(\alpha^2-id)\subseteq \mathcal{Z}(\mathfrak{g})$ then $( T_0^*(\mathfrak{g})_\Omega,[\ ,\ ]_\Omega,\Omega,B_\Omega )$ is a RQH-Lie algebra where $\Omega=\alpha+\ ^t\alpha.$
\end{theorem}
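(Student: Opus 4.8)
The plan is to split the statement into its two parts: an \emph{iff} characterizing when $\Omega=\alpha+{}^t\alpha$ is a symmetric automorphism of $T^*(\mathfrak{g})$, and the subsequent RQH-Lie structure, which will fall out immediately from the earlier construction once the \emph{iff} is in hand. I would first fix notation: $\Omega$ acts on $T^*(\mathfrak{g})=\mathfrak{g}\oplus\mathfrak{g}^*$ by $\Omega(x+f)=\alpha(x)+{}^t\alpha(f)$, where ${}^t\alpha(f)=f\circ\alpha$. Two reductions are essentially free. Bijectivity of $\Omega$ is automatic: $\alpha\in Aut(\mathfrak{g})$ forces ${}^t\alpha$ to be bijective on $\mathfrak{g}^*$, so $\Omega$ is a linear isomorphism. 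Symmetry of $\Omega$ with respect to $B_0$ is also automatic and uses no hypothesis on $\alpha$, since a direct evaluation gives
$$B_0(\Omega(x+f),y+h)=f(\alpha(y))+h(\alpha(x))=B_0(x+f,\Omega(y+h)).$$
Hence the whole content of the equivalence reduces to deciding when $\Omega$ is an algebra homomorphism, i.e. $\Omega([u,v])=[\Omega(u),\Omega(v)]$.

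The core of the argument is to expand both sides of this identity for $u=x+f$, $v=y+h$ using the product of $T^*(\mathfrak{g})$. On the $\mathfrak{g}$-component both sides equal $\alpha([x,y])$, because $\alpha$ is a Lie automorphism. On the $\mathfrak{g}^*$-component, evaluating at an arbitrary $z\in\mathfrak{g}$, the left side $\Omega([u,v])$ yields $f([y,\alpha(z)])-h([x,\alpha(z)])$, while the right side $[\Omega(u),\Omega(v)]$ yields $f([\alpha^2(y),\alpha(z)])-h([\alpha^2(x),\alpha(z)])$. Since $f,h$ are arbitrary functionals, equality for all $u,v,z$ is equivalent to $[(id-\alpha^2)(y),\alpha(z)]=0$ for all $y,z$; as $\alpha$ is surjective, $\alpha(z)$ ranges over all of $\mathfrak{g}$, so this says exactly $Im(\alpha^2-id)\subseteq\mathcal{Z}(\mathfrak{g})$. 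This proves the equivalence. The step I expect to demand the most care is the bookkeeping of the transposes: one must keep straight that ${}^t\alpha(f\circ \ad y)(z)=f([y,\alpha(z)])$ whereas ${}^t\alpha(f)\circ \ad\alpha(y)(z)=f([\alpha^2(y),\alpha(z)])$, and it is precisely this extra $\alpha^2$ that produces the center condition.

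For the second assertion, assume $Im(\alpha^2-id)\subseteq\mathcal{Z}(\mathfrak{g})$. By the equivalence just established, $\Omega\in Aut_S(T^*(\mathfrak{g}),B_0)$. Since $(T^*(\mathfrak{g}),[\ ,\ ],B_0)$ is a quadratic Lie algebra by the preceding proposition, I would then invoke the earlier construction sending a quadratic Lie algebra together with a symmetric automorphism to a quadratic Hom-Lie algebra, applied with the symmetric automorphism $\Omega$. This gives that $(T^*(\mathfrak{g})_\Omega,[\ ,\ ]_\Omega,\Omega,B_\Omega)$, with $[u,v]_\Omega=[\Omega(u),\Omega(v)]$ and $B_\Omega(u,v)=B_0(\Omega(u),v)$, is a quadratic Hom-Lie algebra. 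Finally, since $\Omega$ is invertible, it is regular, so the quadruple is an RQH-Lie algebra, which completes the proof.
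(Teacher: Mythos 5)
Your proposal is correct and follows essentially the same route as the paper: the same component-wise expansion of $\Omega([x+f,y+h])=[\Omega(x+f),\Omega(y+h)]$ reducing the homomorphism condition to $[(id-\alpha^2)(y),\alpha(z)]=0$ (hence to $Im(\alpha^2-id)\subseteq\mathcal{Z}(\mathfrak{g})$ by surjectivity of $\alpha$), the same direct verification that $B_0(\Omega(x+f),y+h)=B_0(x+f,\Omega(y+h))$, and the same appeal to the composition construction for symmetric automorphisms of quadratic Lie algebras to get the RQH-Lie structure. Your explicit remarks that bijectivity of $\Omega$ and its $B_0$-symmetry hold unconditionally are slight clarifications of the paper's argument, not a different method.
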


\begin{proof}
Let $x,y\in \g$ and $f,h\in \g^*$.

\begin{eqnarray*}
\Omega ([x+f,y+h])&=&\Omega([x,y]_{\mathfrak{g}}+f\circ ady-h \circ adx)\\
\ &=& \alpha([x,y]_{\mathfrak{g}})+f\circ ady\circ \alpha -h \circ adx \circ \alpha,
\end{eqnarray*}

and
\begin{eqnarray*}
[\Omega (x+f),\Omega (y+h)]&=&[ \alpha(x)+f\circ \alpha, \alpha(y)+h\circ \alpha]\\
\ &=& [\alpha (x),\alpha (y)]_{\mathfrak{g}}+f\circ \alpha \circ ad \alpha (y) -h \circ \alpha\circ ad \alpha (x) ,
\end{eqnarray*}
Then $\Omega ([x+f,y+h])=[\Omega (x+f),\Omega (y+h)]$ if and only if
$$\forall x,y\in \g,\quad f\circ ady\circ \alpha -h \circ adx \circ \alpha=f\circ \alpha \circ ad \alpha (y) -h \circ \alpha\circ ad \alpha (x).
$$
That is for all $z\in \g$
$$f([y,\alpha (z)])-h([x,\alpha (z)])=f(\alpha[\alpha(y),z])-h(\alpha[\alpha(x),z]).$$
Hence, $\Omega$ is an automorphism of $T^*(\mathfrak{g})$ if and only if $f([x,\alpha(y)])=f(\alpha[\alpha(x),y])$, $\forall f\in \g^*$ $\forall x,y \in \g,$
  which is equivalent to $[x,\alpha(y)]=\alpha[\alpha(x),y]$ $\forall x,y \in \g.$

As a consequence, $\Omega \in  Aut(T_0^*(\mathfrak{g}))$ if and only if $[\alpha^2(x)-x,\alpha (y)]_\g=0$ $\forall x,y \in \g$, ie. $Im(\alpha^2 -id)\subseteq \mathcal{Z}(\mathfrak{g})$, since $\alpha \in Aut(\mathfrak{g}).$

In the following we show that $\Omega$ is symmetric with respect to $B_0$. Indeed, let $x,y\in \g$ and $f,h\in \g^*$

\begin{eqnarray*}
B_0(\Omega (x+f),y+h)&=&B_0(\alpha(x)+f\circ \alpha,y+h)
\\ \
&=& f\circ \alpha(y)+h(\alpha(x))
\\ \
&=& f\circ \alpha(y)+h\circ \alpha(x)
\\ \
&=& B_0(x+f,\alpha(y)+h\circ \alpha)
\\ \
&=& B_0(x+f,\Omega (y+h)).
\end{eqnarray*}

The last assertion  is a consequence of the previous calculations and  Proposition \ref{prop2}.
\end{proof}

In the following we provide examples which show that the class of Lie algebras with automorphisms satisfying the condition $Im (\alpha^2(x)-x) \in \mathcal{Z}(\mathfrak{g})$ is large. We consider first Lie algebras with involutions.

\begin{corollary}\label{cor4.5}
Let $(\g,[\ ,\ ]_\mathfrak{g})$ be a Lie   algebra and $\theta\in Aut(\g )$ such that $\theta ^2=id$ ($\theta$ is an involution), then $\theta^2(x)-x= 0 \in \mathcal{Z}(\mathfrak{g})$, $\forall x\in \g$. Thus $( T_0^*(\mathfrak{g})_\Omega,[\ ,\ ]_\Omega,\Omega,B_\Omega )$ is a RQH-Lie algebra where $\Omega=\alpha+\ ^t\alpha.$
\end{corollary}

\begin{example}
Recall that considering an involution on a Lie algebra $\g$ is equivalent to have a $\Z _2$-graduation on $\g$. The Lie algebras with involutions are called symmetric (see \cite{Dixmier2} \cite{TauvelLivre}).

It  is well known that starting from a Lie algebra one may construct a symmetric Lie algebra in the following way :

Let $( \g, [\cdot,\cdot])$ be a Lie algebra, we consider the Lie algebra $( \LL, [\cdot,\cdot]_\LL)$ where $\LL=\g \times \g$ and the bracket defined by for all $x,y,x',y'\in \g$ by $[(x,y),(x',y')]_\LL:=([x,x'],[y,y']).$

It is easy to check that the map $\
\theta: \begin{array}{c}
 \mathfrak{L}  \rightarrow \mathfrak{L}\\
(x,y)\mapsto (y,x)
\end{array}
$ is an automorphism of $\LL$.

Then the trivial  $T^*$-extension of $\LL$ has $\Omega=\theta+\ ^t \theta$ as a symmetric automorphism with respect to $B_0$. Moreover, $\Omega$ is an involution. According to corollary  \ref{cor4.5}, we have  $( T_0^*(\LL)_\Omega,[\ ,\ ]_\Omega,\Omega,(B_0)_\Omega )$ is a quadratic Hom-Lie algebra.
\end{example}

\begin{example}\label{exampleSS1}
Let $( \g, [\cdot,\cdot])$ be a semisimple Lie algebra with an involution $\theta$. Recall that the classification of semisimple Lie algebras with  involutions could be found in \cite{Lister}.
The Killing form $\mathcal{K}$ of $\g$ is nondegenerate and $\theta$ is symmetric with respect to $\mathcal{K}$. Then $(\g_\theta,[\ ,\ ]_\theta,\theta,\mathcal{K}_\theta)$ is a RQH-Lie algebra.

For example, let $\g=\mathfrak{sl}_n(\K)$ (with $n\geq 2$). The linear map  $\theta : \g\rightarrow\g$ defined for all $x\in \g$ by $\theta (x)=-^t x$ is an involution automorphism. Therefore we endow  $(\mathfrak{sl}_n(\K))_\theta$ with a RQH-Lie algebra structure.
\end{example}

\begin{example}
We show an example of Lie algebra $\g$ with automorphisms $\alpha$ which are not involutions and satisfying $Im(\alpha^2-id)\subseteq \mathcal{Z}(\mathfrak{g})$.

Let $\g$ be a finite-dimensional vector space and $\B=\{x_0,\cdots, x_n\}$, $(n\geq 1)$ be a basis of $\g$. We define on $\g$ a structure of Lie algebra by $$[x_0,x_i]=x_{i+1},\quad \forall i\in \{1,\cdots, n-1\}$$
The others brackets are defined obtained by skewsymmetry or equal to zero.

This Lie algebra is nilpotent and called filiform Lie algebra, its center is the subvector space generated by $<x_n>$.

The endomorphism $\alpha : \g \rightarrow \g $ defined by
\begin{align*}
& \alpha (x_0)=x_0 +\lambda x_n,\quad  \text{where } \lambda\in \K\setminus \{0\},\\
& \alpha (x_i)=x_i, \quad \forall i\in \{1,\cdots, n\}
\end{align*}
is an automorphism of $\g$ satisfying
\begin{align*}
& \alpha^2 (x_0)=x_0 +2 \lambda x_n,\\
& \alpha^2 (x_i)=x_i, \quad \forall i\in \{1,\cdots, n\}
\end{align*}
Therefore $(\alpha^2-id)(\g)\subseteq \mathcal{Z}(\mathfrak{g})$ and $\alpha^2 \neq id$. According to Theorem \ref{Thm1},   $( T_0^*(\LL)_\Omega,[\ ,\ ]_\Omega,\Omega,(B_0)_\Omega )$, where $\Omega=\alpha+^t \alpha$, is a RQH-Lie algebra.
\end{example}

\begin{example}[Nonabelian $2$-nilpotent Lie algebras]
 Let $\g=V\oplus \mathcal{Z}(\g)$, where $V\neq\{0\}$ is a subspace  of the vector space $\g$ with $[V,V]=[\g,\g]\subseteq \mathcal{Z}(\g)$.

Let $\lambda : \g\rightarrow \mathcal{Z}(\g)$ be a nontrivial  linear map and $\alpha : \g\rightarrow \g$ is an endomorphism of $\g$ defined by
$$\alpha (v+z):=v+\lambda (v)+z\quad \forall v\in V\ \  \forall z\in \mathcal{Z}(\g).
$$
We have $\alpha ([v+z,v'+z'])=\alpha ([v,v'])=[v,v']$ since $[v,v']\in \mathcal{Z}(\g)$.
Also $[\alpha (v+z),\alpha (v'+z')])=[v,v']$. Therefore, the map $\alpha$ is an injective Lie algebra morphism. Thus $\alpha$ is an automorphism of $\g$.

Moreover, if $v\in \g$ and $z\in \mathcal{Z}(\g)$, we  have
\begin{align*}
(\alpha^2-id)(v+z)&= \alpha^2(v+z)-(v+z)\\
\ &= \alpha(v+\lambda (v)+z)-(v+z)\\
\ &= v+2 \lambda (v)+z-v-z)\\
\ &= 2 \lambda (v).
\end{align*}
Then $\alpha^2-id\neq 0$ and $Im(\alpha^2-id)\subseteq \mathcal{Z}(\g).$
It follows that $( T_0^*(\g)_\Omega,[\ ,\ ]_\Omega,\Omega,(B_0)_\Omega )$, where $\Omega=\alpha+^t \alpha$, is a RQH-Lie algebra.

It is clear that $T_0^*(\g)_\Omega$ is 2-nilpotente. It 's also a quadratic Lie algebra.
\end{example}

%
\begin{proposition}
Let $\A$ be a commutative associative algebra and $\g$ be a Lie algebra.

If $\A$ has an automorphism $\theta$ such that $Im(\theta^2-id)\subseteq Ann (\A)$, where $Ann(\A)$ denotes the annihilator of $\A$, then the endomorphism $\widetilde{\theta} :=id_\g\otimes \theta$ of $\g\otimes \A$ is an automorphism of the Lie algebra $( \g\otimes \A,[\ ,\ ])$, where $[x\otimes a,y\otimes b]:= [x,y]_\g \otimes ab$ for all $x,y\in \g$ and  $a,b\in\A$. In addition, $Im(\widetilde{\theta}^2-id_{\g \otimes\A})\subseteq \mathcal{Z}(\g\otimes\A)$. Then
$( T_0^*(\g\otimes \A)_\Omega,[\ ,\ ]_\Omega,\Omega,(B_0)_\Omega )$ is a RQH-Lie algebra.

Moreover, if $\theta^2\neq id_\A$ then $\widetilde{\theta}^2\neq id_{\g \otimes\A}$.
\end{proposition}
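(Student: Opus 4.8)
The plan is to verify the three ingredients required by Theorem \ref{Thm1}, applied to the Lie algebra $\g\otimes\A$ with the candidate automorphism $\alpha=\widetilde{\theta}$, and then to dispose of the final ``moreover'' clause by a one-element argument. First I would check that $\widetilde{\theta}=id_\g\otimes\theta$ is an automorphism of $(\g\otimes\A,[\ ,\ ])$, the latter being the standard tensor (current) construction whose Jacobi identity follows from that of $\g$ together with commutativity and associativity of $\A$. Multiplicativity for the bracket is immediate from the fact that $\theta$ respects the product of $\A$: on simple tensors, $\widetilde{\theta}([x\otimes a,y\otimes b])=[x,y]_\g\otimes\theta(ab)=[x,y]_\g\otimes\theta(a)\theta(b)=[\widetilde{\theta}(x\otimes a),\widetilde{\theta}(y\otimes b)]$, and this extends bilinearly to all of $\g\otimes\A$. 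Bijectivity is clear since $\widetilde{\theta}=id_\g\otimes\theta$ is the tensor product of the bijections $id_\g$ and $\theta$.

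The second and central step is to establish $Im(\widetilde{\theta}^2-id_{\g\otimes\A})\subseteq\mathcal{Z}(\g\otimes\A)$. The key observation is the factorization $\widetilde{\theta}^2-id_{\g\otimes\A}=id_\g\otimes(\theta^2-id_\A)$, so that on an arbitrary element $\sum_i x_i\otimes a_i$ the operator produces $\sum_i x_i\otimes(\theta^2-id_\A)(a_i)\in\g\otimes Im(\theta^2-id_\A)\subseteq\g\otimes Ann(\A)$ by hypothesis. It then suffices to show $\g\otimes Ann(\A)\subseteq\mathcal{Z}(\g\otimes\A)$: for $c\in Ann(\A)$ and any $y\otimes b$ we have $[x\otimes c,y\otimes b]=[x,y]_\g\otimes cb=0$ because $cb=0$, and summing over simple tensors shows every element of $\g\otimes Ann(\A)$ commutes with all of $\g\otimes\A$. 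Combining the two inclusions gives exactly the required center condition.

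With these two facts in hand, the main conclusion follows directly: $\g\otimes\A$ is a Lie algebra, $\widetilde{\theta}\in Aut(\g\otimes\A)$, and $Im(\widetilde{\theta}^2-id)\subseteq\mathcal{Z}(\g\otimes\A)$, so Theorem \ref{Thm1} applied with $\alpha=\widetilde{\theta}$ yields that $(T_0^*(\g\otimes\A)_\Omega,[\ ,\ ]_\Omega,\Omega,(B_0)_\Omega)$ with $\Omega=\widetilde{\theta}+{}^t\widetilde{\theta}$ is a RQH-Lie algebra. For the final clause I would argue contrapositively at the level of a single tensor: if $\theta^2\neq id_\A$, pick $a\in\A$ with $\theta^2(a)\neq a$ and any nonzero $x\in\g$; then $\widetilde{\theta}^2(x\otimes a)-x\otimes a=x\otimes(\theta^2(a)-a)\neq0$, since a simple tensor of two nonzero vectors over a field is nonzero, whence $\widetilde{\theta}^2\neq id_{\g\otimes\A}$.

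I do not expect a genuine obstacle here; the only point requiring care is the passage from simple tensors to general elements — both the center inclusion and the image computation must be phrased for sums $\sum_i x_i\otimes a_i$ rather than merely for decomposable tensors — together with the minor bookkeeping of invoking Theorem \ref{Thm1} under the correct substitution. The essential content is simply that $\g$ tensored with the annihilator of $\A$ lands inside the center, and that the multiplicativity of $\theta$ makes $id_\g\otimes\theta$ a bracket homomorphism.
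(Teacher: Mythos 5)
Your proposal is correct and follows exactly the route the paper intends: the paper's own proof is just the statement ``It follows from direct calculation and Theorem~\ref{Thm1},'' and your argument supplies precisely that calculation --- multiplicativity and bijectivity of $\widetilde{\theta}=id_\g\otimes\theta$, the factorization $\widetilde{\theta}^2-id_{\g\otimes\A}=id_\g\otimes(\theta^2-id_\A)$ together with the inclusion $\g\otimes Ann(\A)\subseteq\mathcal{Z}(\g\otimes\A)$, and the simple-tensor argument for the final clause --- before invoking Theorem~\ref{Thm1} with $\alpha=\widetilde{\theta}$. No gaps; the only negligible caveat is that the ``moreover'' clause implicitly assumes $\g\neq\{0\}$, an assumption the paper also makes tacitly.
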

\begin{proof}
It follows from direct calculation and Theorem \ref{Thm1}.
\end{proof}

\begin{example}
Let $\A$ be the vector space spanned by $\{e,f,h,t\}$ on which we define a commutative associative multiplication by
$$ee=f,\quad ef=fe=h,\quad  eh=he=t,\quad ff=t.$$
We consider the  algebra morphism $\theta :\A \rightarrow \A$ defined by
 $$\theta (e)=e+qt,\quad \theta(f)=f,\quad  \theta (h)=h,\quad \theta (t)=t.$$
 where $q\in \K \setminus \{0\}.$
 It is easy to check that $Im(\theta^2-id)\subseteq Ann (\A)$ and $\theta^2\neq id.$

 Let $\g$ be a Lie algebra (for example $\g = \mathfrak{sl}(2)$), then  $\g\otimes \A$ is a Lie algebra with $\widetilde{\theta}=id_\g \otimes \theta \in Aut(\g\otimes \A)$ satisfying $Im(\widetilde{\theta}^2-id_{\g \otimes\A})\subseteq \mathcal{Z}(\g\otimes\A)$. Then
$( T_0^*(\g\otimes \A)_\Omega,[\ ,\ ]_\Omega,\Omega,(B_0)_\Omega )$ is a RQH-Lie algebra.
\end{example}


\section{Connection between quadratic Lie algebras and  Hom-Lie algebras}
We establish a connection between some classes of Lie algebras (resp. quadratic Lie algebras) and  classes of  Hom-Lie algebras (resp. quadratic Hom-Lie algebras).
\subsection{Lie algebras with involutive automorphisms}
\begin{theorem}
There exists a biunivoque correspondence between the class of Lie algebras (resp. quadratic Lie algebras) admitting involutive automorphisms (resp. symmetric involutive automorphisms) and the class of  Hom-Lie algebras (resp. quadratic Hom-Lie algebras) where twist maps are involutive automorphisms (resp. symmetric involutive automorphisms)

\end{theorem}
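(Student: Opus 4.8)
The plan is to establish the biunivoque correspondence by constructing explicit maps in both directions and verifying they are mutually inverse. The key observation from Theorem \ref{thmYauConstrHomLie} and Proposition \ref{CompoMethodLie} is that composition by an involution $\alpha$ (so $\alpha^{-1}=\alpha$) converts a Lie algebra into a Hom-Lie algebra and, reciprocally, composition by $\alpha^{-1}=\alpha$ converts an involutive Hom-Lie algebra back into a Lie algebra. The correspondence I would set up sends a Lie algebra $(\g,[\ ,\ ])$ with involutive automorphism $\alpha$ to the Hom-Lie algebra $\g_\alpha=(\g,[\ ,\ ]_\alpha=\alpha\circ[\ ,\ ],\alpha)$ of Theorem \ref{thmYauConstrHomLie}; conversely it sends an involutive Hom-Lie algebra $(\g,[\ ,\ ],\alpha)$ with $\alpha$ an involutive automorphism to the Lie algebra $(\g,[\ ,\ ]_{\alpha^{-1}}=\alpha\circ[\ ,\ ],\alpha)$ of Proposition \ref{CompoMethodLie}, retaining $\alpha$ as the distinguished involution.

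First I would check that the two constructions are well defined on the indicated classes: that $\g_\alpha$ is indeed a Hom-Lie algebra whose twist map $\alpha$ is still an involutive automorphism of the new bracket, and that in the reverse direction $\alpha$ remains a Lie algebra automorphism of $[\ ,\ ]_{\alpha^{-1}}$. The crucial computation both ways is that $\alpha$ commutes with the new bracket. For instance, in the forward direction one has $\alpha([x,y]_\alpha)=\alpha(\alpha[x,y])=\alpha[\alpha x,\alpha y]=[\alpha x,\alpha y]_\alpha$, using that $\alpha$ is a Lie-algebra automorphism and $\alpha^2=\mathrm{id}$. Second I would verify the two constructions are mutually inverse: starting from $(\g,[\ ,\ ])$ and passing to $\g_\alpha$ and back yields the bracket $\alpha\circ[\ ,\ ]_\alpha=\alpha\circ\alpha\circ[\ ,\ ]=\alpha^2\circ[\ ,\ ]=[\ ,\ ]$ since $\alpha^2=\mathrm{id}$, recovering the original Lie bracket, and symmetrically in the other direction. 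Here the involutivity $\alpha^2=\mathrm{id}$ is exactly what makes the round trip close, which is why the correspondence is restricted to involutive twist maps rather than arbitrary invertible ones.

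For the quadratic enhancement I would carry the bilinear form along unchanged, keeping $B$ fixed under both constructions, and verify that invariance in one category is equivalent to the appropriate invariance plus the compatibility \eqref{AlphaComp} in the other. Concretely, if $B$ is an invariant symmetric nondegenerate form for the Lie algebra and $\alpha$ is symmetric with respect to $B$ (i.e. $B(\alpha x,y)=B(x,\alpha y)$), then I would check the Hom-Lie invariance $B([x,y]_\alpha,z)=B(x,[y,z]_\alpha)$ by writing $B(\alpha[x,y],z)=B([x,y],\alpha z)=B(x,[y,\alpha z])$ and comparing with $B(x,\alpha[y,z])=B(x,[y,z]_\alpha)$; the symmetry of $\alpha$ and the Lie-invariance of $B$ should reconcile these. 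The condition \eqref{AlphaComp} is precisely the symmetry of $\alpha$, so it transfers directly. The main obstacle I anticipate is purely bookkeeping rather than conceptual: one must track precisely which invariance identity holds in which category and confirm that the symmetry of $\alpha$ relative to $B$ is preserved and characterizes the correspondence on the quadratic side, so that symmetric involutive automorphisms of quadratic Lie algebras match symmetric involutive twist maps of quadratic Hom-Lie algebras. Since $B$ and $\alpha$ are unchanged and the round-trip identity $\alpha^2\circ[\ ,\ ]=[\ ,\ ]$ already forces bijectivity of the correspondence, no deeper difficulty should arise.
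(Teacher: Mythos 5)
Your treatment of the brackets is essentially the paper's own argument: twist the Lie bracket by $\alpha$ to pass to the Hom-Lie side (Theorem \ref{thmYauConstrHomLie}), untwist by $\alpha^{-1}=\alpha$ to come back (Proposition \ref{CompoMethodLie}), and use $\alpha^{2}=\mathrm{id}$ to see that the two constructions are mutually inverse and that $\alpha$ stays an involutive automorphism of the new bracket throughout. That half is sound.

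The quadratic half, however, has a genuine gap: you cannot ``carry the bilinear form along unchanged.'' Your own computation exposes the problem. You reduce the desired invariance $B([x,y]_\alpha,z)=B(x,[y,z]_\alpha)$ to comparing $B(x,[y,\alpha(z)])$ with $B(x,\alpha([y,z]))=B(x,[\alpha(y),\alpha(z)])$, and by nondegeneracy these agree for all $x$ only if $[y-\alpha(y),\alpha(z)]=0$ for all $y,z$, i.e.\ only if $\mathrm{Im}(\mathrm{id}-\alpha)\subseteq\mathcal{Z}(\g)$; the symmetry of $\alpha$ and the Lie-invariance of $B$ do \emph{not} reconcile the two sides in general. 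Concretely, take $\g=\mathfrak{sl}_2(\K)$ with basis $e,f,h$, brackets $[h,e]=2e$, $[h,f]=-2f$, $[e,f]=h$, the Killing form $\mathcal{K}$, and the symmetric involution $\theta(x)=-{}^{t}x$, so that $\theta(e)=-f$, $\theta(f)=-e$, $\theta(h)=-h$. Then
\begin{equation*}
\mathcal{K}\bigl([h,e]_\theta,e\bigr)=\mathcal{K}\bigl(\theta(2e),e\bigr)=-2\,\mathcal{K}(f,e)\neq 0=\mathcal{K}\bigl(h,[e,e]_\theta\bigr),
\end{equation*}
so $\mathcal{K}$ is not invariant for the twisted bracket and $(\g,[\ ,\ ]_\theta,\theta,\mathcal{K})$ is not a quadratic Hom-Lie algebra. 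The missing idea --- and what the paper does in both directions of its proof --- is to twist the form together with the bracket, setting $B_\alpha(x,y):=B(\alpha(x),y)$. Then $B_\alpha([x,y]_\alpha,z)=B(\alpha^{2}([x,y]),z)=B([x,y],z)=B(x,[y,z])=B_\alpha(x,[y,z]_\alpha)$, using $\alpha^{2}=\mathrm{id}$ and the symmetry of $\alpha$; moreover $B_\alpha$ is symmetric, nondegenerate, satisfies \eqref{AlphaComp}, and --- crucially for bijectivity of the correspondence --- the round trip also closes on forms, since $(B_\alpha)_\alpha(x,y)=B(\alpha^{2}(x),y)=B(x,y)$. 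With this modification your argument becomes the paper's proof; without it the quadratic part of the correspondence fails.
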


\begin{proof}
Let $(\g,[ \ ,\ ])$ be a symmetric Lie algebra with  $\theta$  an involutive automorphism of $\g$.

Then, according to Theorem \ref{thmYauConstrHomLie}, $(g_\theta,[\ ,\ ]_\theta,\theta)$ is a Hom-Lie algebra where $\theta$ is an involutive automorphism of $\g_\theta$.

Moreover, if $\g$ has an invariant scalar product $B$ such that $\theta$ is symmetric with respect to $B$, we have seen that
\begin{equation}
B_\theta: \begin{array}{c}
\g_\theta\times\g_\theta\rightarrow \K \\
(x,y)\mapsto B_\theta(x,y):=B(\theta(x),y)
\end{array}
\end{equation}
defines a quadratic structure on $\g_\theta$.

Conversely, let $(H,[\ ,\ ]_H, \theta)$ be a Hom-Lie algebra where $\theta$ is an involutive automorphism of $H$.

We will untwist the Hom-Lie algebra structure by considering the vector space $H$ and the bracket
\begin{equation}
[\ ,\ ]: \begin{array}{c}
H\times H \rightarrow H \\
(x,y)\mapsto [x,y]:=[\theta(x),\theta (y)]_H
\end{array}
\end{equation}
Obviously the new bracket is bilinear and skewsymmetric. We show that it satisfies the Jacobi identity.

Indeed, for $x,y,z\in H$ we have
\begin{align*}
[x,[y,z]]&= [\theta (x),\theta ([y,z])]_H\\
\ &= [\theta (x),\theta ([\theta(y),\theta(z)]_H)]_H\\
\ &= [\theta (x),[\theta^2(y),\theta^2(z)]_H)]_H\\
\ &= [\theta (x),[y,z]_H)]_H.
\end{align*}
Thus
$$\circlearrowleft_{x,y,z}{[x,[y,z]]}=\circlearrowleft_{x,y,z}{[\theta (x),[y,z]_H)]_H}=0.$$
Thus $(H,[\ ,\ ])$ is a Lie algebra.

Furthermore, for $x,y\in H$
$$\theta ([x,y])=\theta ([\theta(x),\theta(y)]_H)=[\theta^2(x),\theta^2(y)]_H =[x,y]_H$$
and
$$[\theta(x),\theta(y)]=[\theta^2(x),\theta^2(y)]_H=[x,y]_H.$$
Then $\theta ([x,y])=[\theta(x),\theta(y)]$. Therefore $\theta$ is an involutive automorphism of the Lie algebra  $(H,[\ ,\ ])$.

Also for $x,y\in H$
\begin{align*} [x,y]_\theta:&=[\theta(x),\theta(y)]
=[\theta^2(x),\theta^2(y)]_H
=[x,y]_H.
\end{align*}
Then $(H,[\ ,\ ]_\theta,\theta)$ is the Hom-Lie algebra  $(H,[\ ,\ ]_H,\theta)$.

Now, let $(H,[\ ,\ ]_H, \theta,B)$ be a quadratic  Hom-Lie algebra.

The bilinear form
\begin{equation}
T: \begin{array}{c}
H\times H \rightarrow \K \\
(x,y)\mapsto T(x,y)=B(\theta(x),y)
\end{array}
\end{equation}
is symmetric and nondegenerate.

Indeed, for Let
$x,y,z\in H$, we have
\begin{eqnarray*}
T([x,y],z)&=&B(\theta([x,y]),z)
\\ \
&=& B(\theta[\theta(x),\theta(y)]_H,z)
\\ \
&=& B([x,y]_H,z)
\\ \
&=& B(x,[y,z]_H)
\\ \
&=& B(\theta (x),\theta ([y,z]_H)) \quad \text{ $\theta$ is  $B$-symmetric}
\\ \
&=& B(\theta (x),[\theta (y),\theta (z)]_H))
\\ \
&=& B(\theta (x),[y,z]))
\\ \
&=& T(x,[y,z]).
\end{eqnarray*}
Then $T$ is invariant.   In the other hand,
$$T(\theta (x),y)=B(x,y)=B(\theta (x), \theta (y))=T(x,\theta (y)).$$
That is $\theta$ is symmetric with respect to $T$.

Therefore $(H,[\ ,\ ], T)$ is a quadratic Lie algebra and  $(H,[\ ,\ ]_\theta, \theta,T_\theta)$ is an IQH-Lie algebra.
\end{proof}

\subsection{Quadratic Hom-Lie algebras and Centroid's elements}

Now we discuss the connection between Hom-Lie algebras where the twist map is in the centroid and quadratic Lie algebras.
 Let $(\g,[\ ,\ ],B)$ be a quadratic Lie algebra and $\theta\in Cent(\g )$ such that $\theta$ is invertible and symmetric with respect to $B$. We set
 $$ Cent_S(\g)=\{ \theta\in Cent(\g) : \theta \text{ symmetric with respect to } B\}.
 $$

 We consider
 \begin{equation}
B_\theta: \begin{array}{c}
\g\times\g\rightarrow \K \\
(x,y)\mapsto B_\theta(x,y):=B(\theta(x),y)
\end{array}
\end{equation}
We have
\begin{itemize}
\item $B_\theta$ is symmetric.
\item $B_\theta$ is nondegenerate.
\item $B_\theta$ is invariant, indeed
\begin{align*}
B_\theta(\{x,y\},z) &=B_\theta([\theta(x),y],z)=B(\theta([\theta (x),y]),z)\\
\ &=B([\theta (x),y],\theta (z))=B(\theta (x),[y,\theta (z)])\\
\ & = B(\theta(x),[\theta (y),z])=B(\theta(x),\{y,z\})\\
\ &= B_\theta(x,\{y,z\}).
\end{align*}
\end{itemize}
Also, we have
$$B_\theta(\theta(x),y)=B(\theta^2(x),y)=B(\theta(x),\theta (y))=B_\theta(x,\theta(y)).
$$
Then $(\g,\{\ ,\ \}, \theta,B_\theta )$ is a quadratic Hom-Lie algebra.

Notice that $B_\theta $ is also an invariant scalar product of the Lie algebra $\g$.

We have also that $(\g,[\  ,\ ]_\theta, \theta,B_\theta )$ is a quadratic Hom-Lie algebra. indeed
\begin{align*}
B_\theta([x,y]_\theta,z) &=B_\theta([\theta(x),\theta (y)],z)=B(\theta([\theta (x),\theta (y)]),z)\\
\ &=B([\theta (x),\theta (y)],\theta (z))=B(\theta (x),[\theta (y),\theta (z)])\\
\ & = B(\theta(x),[y,z]_\theta)\\
\ &= B_\theta(x,[y,z]_\theta).
\end{align*}
Observe that
\begin{align*}
\theta ([x,y]_\theta)=\theta [\theta (x),\theta (y)]=[\theta^2 (x),\theta (y)]=[\theta (x),y]_\theta.\\
\theta (\{x,y\})=\theta [\theta (x),y]=[\theta^2 (x),y]=\{\theta (x),y\}.
\end{align*}
We may say that $\theta\in Cent(\g,\{\ ,\ \} )$ and $\theta\in Cent(\g,[\ ,\ ]_\theta ).$

Conversely, let $(\g,[\ ,\ ],\alpha )$ be a Hom-Lie algebra such that $\alpha \in Cent(\g,[\ ,\ ],\alpha )$.

We define a new bracket as
$\{x,y\}:=[\alpha (x),y].
$. Then $(\g, \{\ ,\ \}))$ is a Lie algebra. Indeed
\begin{itemize}
\item The bracket is skewsymmetric.
\item We have
\begin{align*}
\{x,\{y,z\}\}&=[\alpha (x),[\alpha(y),z]],\\
\{y,\{z,x\}\}&=[\alpha (y),[\alpha(z),x]=[\alpha^2(y),[z,x]],\\
\{z,\{x,y\}\}&=[\alpha (z),[\alpha(x),y]]=[\alpha (z),[x,\alpha(y)]].
\end{align*}
Then
$$\circlearrowleft_{x,y,z}{\{x,\{y,z\}\}}=[\alpha (x),[\alpha(y),z]]+[\alpha^2(y),[z,x]]+[\alpha (z),[x,\alpha(y)]]=0.
$$
\end{itemize}

We may  define another  which gives rise to also a Lie algebra by
$[x,y]_\alpha:=[\alpha (x),\alpha(y)].
$.  Indeed the  bracket is skewsymmetric and we have
\begin{align*}
[x,[y,z]_\alpha]_\alpha&=[\alpha (x),\alpha([\alpha(y),\alpha (z)])]=[\alpha (x),[\alpha^2(y),\alpha (z)]]=[\alpha^2 (x),[\alpha(y),\alpha (z)]],\\
[y,[z,x]_\alpha]_\alpha&=[\alpha (y),\alpha([\alpha(z),\alpha (x)])]=[\alpha (y),[\alpha^2(z),\alpha (x)]]=[\alpha^2 (y),[\alpha(z),\alpha (x)]],\\
[z,[x,y]_\alpha]_\alpha&=[\alpha (z),\alpha([\alpha(x),\alpha (y)])]=[\alpha (z),[\alpha^2(x),\alpha (y)]]=[\alpha^2 (z),[\alpha(x),\alpha (y)]].
\end{align*}
Therefore
$$[\alpha^2 (x),[\alpha(y),\alpha(z)]]+[\alpha^2(y),[\alpha(z),\alpha(x)]]+[\alpha ^2(z),[\alpha(x),\alpha(y)]]=0.
$$
Now if there is an invariant scalar product $B$ on $(\g,[\ ,\ ] )$ and assume that $\alpha$ is invertible and symmetric with respect to $B$. Consider the bilinear form $B_\alpha$ defined by $B_\alpha (x,y)=B(\alpha (x),y)$. We have
\begin{eqnarray*}
B_\alpha(\{x,y\},z)&=&B(\alpha(\{x,y\})]),z)
\\ \
&=& B(\alpha([\alpha(x),y],z)
\\ \
&=& B(\alpha(x),[y,\alpha(z)])
\\ \
&=& B(\alpha(x),[\alpha (y),z])
\\ \
&=& B_\alpha(x,\{y,z\}).
\end{eqnarray*}
Similarly we have
\begin{eqnarray*}
B_\alpha([x,y]_\alpha,z)&=&B(\alpha([\alpha(x),\alpha(y)]),z)
\\ \
&=& B([\alpha(x),\alpha(y)],\alpha(z))
\\ \
&=& B(\alpha(x),[\alpha(y),\alpha(z)])
\\ \
&=& B(\alpha(x),[y,z]_\alpha)
\\ \
&=& B_\alpha(x,[y,z]_\alpha).
\end{eqnarray*}
Therefore $(\g,\{\ ,\ \} ,B_\alpha)$ and $(\g,[\ ,\ ]_\alpha ,B_\alpha)$ are quadratic Lie algebras.
Hence, we have the following theorem:
\begin{theorem}
There exists a biunivoque correspondence between the class of Lie algebras (resp. quadratic Lie algebras) admitting an  element in the centroid (resp. symmetric  invertible element in the centroid) and the class of  Hom-Lie algebras (resp. quadratic Hom-Lie algebras) where twist map is in the centroid (resp. symmetric  invertible element in the centroid).
\end{theorem}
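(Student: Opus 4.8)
The plan is to exhibit the two constructions discussed immediately before the statement as a pair of mutually inverse maps between the two classes, so that the asserted biunivoque correspondence follows. In one direction, starting from a Lie algebra $(\g,[\ ,\ ])$ together with an invertible element $\theta\in Cent(\g)$ (symmetric with respect to $B$ in the quadratic case), I send it to the Hom-Lie algebra $(\g,\{\ ,\ \},\theta)$ with $\{x,y\}=[\theta(x),y]$ (and, in the quadratic case, the form $B_\theta(x,y)=B(\theta(x),y)$). That this output lies in the target class --- that $(\g,\{\ ,\ \},\theta)$ is a Hom-Lie algebra, that $\theta\in Cent(\g,\{\ ,\ \})$, and that $B_\theta$ is symmetric, nondegenerate and invariant with $\theta$ symmetric for $B_\theta$ --- is exactly the chain of computations carried out above the statement, so I would simply cite them. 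I would use this $\{\ ,\ \}$-construction as the one realizing the bijection, since it is linear in $\theta$ and hence easily invertible; the companion bracket $[x,y]_\theta=[\theta(x),\theta(y)]$ gives a parallel correspondence obtained by untwisting in both slots.

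In the other direction, given a Hom-Lie algebra $(\g,[\ ,\ ]_H,\alpha)$ with $\alpha$ an invertible element of its centroid, I would \emph{untwist} by setting $[x,y]:=[\alpha^{-1}(x),y]_H$. First I note that $\alpha^{-1}$ is again in the centroid and stays symmetric: applying $\alpha^{-1}$ to $\alpha[x,y]_H=[\alpha(x),y]_H$ and substituting $x\mapsto\alpha^{-1}(x)$ gives $[\alpha^{-1}(x),y]_H=\alpha^{-1}[x,y]_H$, while the same substitution in $B(\alpha(x),y)=B(x,\alpha(y))$ transfers symmetry to $\alpha^{-1}$. The verification that $(\g,[\ ,\ ])$ is then a genuine Lie algebra and that the recovered form is invariant, symmetric and nondegenerate is identical to the backward computation already given, applied with $\alpha^{-1}$ in place of $\alpha$.

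The only genuinely new step, and the main point to get right, is that these two assignments are inverse to one another; this is pure bookkeeping with $\theta$ and $\theta^{-1}$, but it is where invertibility is essential. Composing forward then backward, the Lie bracket recovered from $\{x,y\}=[\theta(x),y]$ is $\{\theta^{-1}(x),y\}=[\theta(\theta^{-1}(x)),y]=[x,y]$, and the form is recovered via $B_\theta(\theta^{-1}(x),y)=B(x,y)$; composing backward then forward, $[\alpha^{-1}(x),y]_H$ twisted back by $\alpha$ gives $[\alpha(\alpha^{-1}(x)),y]_H=[x,y]_H$, and similarly for the form. Hence the two maps are mutually inverse, the twist map $\theta$ (respectively $\alpha$) being preserved throughout, which is precisely the required correspondence, and the quadratic refinement follows by carrying the data $B\leftrightarrow B_\theta$ along. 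The subtle point is exactly this: using the naive backward rule $[\alpha(x),y]_H$ instead of $[\alpha^{-1}(x),y]_H$ would produce a spurious factor $\theta^{2}$ in the round trip, so the untwisting must be performed with the inverse of the twist map --- which is why invertibility of the centroid element is hypothesized.
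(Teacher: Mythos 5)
Your forward map is the paper's, but your backward map is genuinely different, and the difference matters. The paper's ``conversely'' construction takes a Hom-Lie algebra $(\g,[\ ,\ ]_H,\alpha)$ with $\alpha$ in the centroid and defines the Lie bracket $\{x,y\}:=[\alpha(x),y]_H$, i.e.\ it twists \emph{again} by $\alpha$ rather than untwisting, and then the theorem is asserted (``Hence, we have the following theorem'') without any check that the two assignments are mutually inverse. In fact they are not: composing the paper's two maps sends $[\ ,\ ]$ to $[\theta^2(\cdot),\cdot]$, which is the identity only when $\theta^2=id$. Your round trips with $[\alpha^{-1}(x),y]_H$ are exactly what is needed to turn the paper's pair of constructions into an actual biunivoque correspondence, and your closing remark about the ``spurious factor $\theta^{2}$'' pinpoints precisely the defect in the paper's implicit argument. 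On this point your proof is the more complete one.

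Two caveats. First, the Jacobi identity for your untwisted bracket is not \emph{literally} the paper's computation ``with $\alpha^{-1}$ in place of $\alpha$'': the Hom-Jacobi identity at your disposal is twisted by $\alpha$, not by $\alpha^{-1}$. You need the extra one-line observation that, by the centroid property, $[\alpha^{-1}(x),[y,z]_H]_H=\alpha^{-2}\bigl([\alpha(x),[y,z]_H]_H\bigr)$, so the cyclic sum for the twist $\alpha^{-1}$ is $\alpha^{-2}$ applied to the given Hom-Jacobi sum and hence vanishes; equivalently, $(\g,[\ ,\ ]_H,\alpha^{-1})$ is again a Hom-Lie algebra with $\alpha^{-1}$ in its centroid, after which the paper's computation does apply verbatim. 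Second, your backward map requires $\alpha$ to be invertible, which the statement hypothesizes only in the quadratic clause; in the plain (non-quadratic) clause the centroid element is not assumed invertible, and there your bijection is unavailable --- indeed no bijection can be built from these constructions in that generality (take $\alpha=0$: every Lie algebra admits $0$ in its centroid, the forward map then kills every bracket, yet \emph{every} skew-symmetric bracket with twist $0$ is a Hom-Lie algebra with twist in the centroid). So you have proved the quadratic half and the regular part of the first half; the non-invertible part of the statement is not covered by your argument --- but it is not covered by the paper's either, since without invertibility the paper's two constructions fail to be mutually inverse.
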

An interesting case is when the element $\theta$  of the centroid is not of the form $\theta=k \ id _\g$ where $k\in\K$.  

One may replace the class of quadratic Lie algebras admitting a  symmetric  invertible element $\theta$ in the centroid,  such that $\theta\neq k \ id_\g$ where $k\in\K $, by the class of quadratic Lie algebras of  quadratic dimension larger than 2, i.e. there exist $B$ and $B'$ two invariant scalar products on $\g$ such that $\nexists \lambda \in \K$ such that $B=\lambda B'.$  For quadratic dimension see \cite{BajoBenayadi,BenayadiJA2003,Pinczon10}.
\begin{corollary}
There exists a one to one  correspondence between the class of  quadratic Lie algebra of  quadratic dimension larger than 2 and the class of  quadratic Hom-Lie algebras where the twist map is invertible and  in the centroid  such that it is not a multiple of $id_\g$.
\end{corollary}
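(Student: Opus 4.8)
The plan is to deduce this corollary from the preceding Theorem by reformulating, on the Lie-algebra side, the existence of a symmetric invertible element $\theta$ of the centroid with $\theta\neq k\,id_\g$ as the intrinsic condition that the quadratic dimension of $\g$ be larger than $2$. First I note that in the correspondence of the Theorem the twist map of the associated Hom-Lie algebra is exactly the centroid element $\theta$ (the new bracket being $\{x,y\}=[\theta(x),y]$ with twist map $\theta$); hence the supplementary requirement that $\theta$ not be a scalar multiple of $id_\g$ is transported verbatim to the Hom-Lie side. It therefore suffices to identify, for a fixed quadratic Lie algebra $(\g,[\ ,\ ],B)$, the symmetric invertible centroid elements $\theta\neq k\,id_\g$ with the second, non-proportional, invariant scalar products.

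The key step is the bijection between symmetric elements of $Cent(\g)$ and invariant symmetric bilinear forms. To a symmetric $\theta\in Cent(\g)$ I associate $B_\theta(x,y):=B(\theta(x),y)$, which is symmetric and invariant by the computation already carried out above. Conversely, given any invariant symmetric bilinear form $B'$ on $\g$, nondegeneracy of $B$ provides a unique $\theta\in End(\g)$ with $B'(x,y)=B(\theta(x),y)$; symmetry of $B'$ and of $B$ gives $B(\theta(x),y)=B(x,\theta(y))$, so $\theta$ is $B$-symmetric, while combining the invariance of $B'$ with that of $B$ yields $B(\theta([x,y]),z)=B([\theta(x),y],z)$ for all $z$, whence $\theta([x,y])=[\theta(x),y]$ and $\theta\in Cent(\g)$. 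Under this correspondence $\theta$ is invertible precisely when $B_\theta$ is nondegenerate, i.e.\ an invariant scalar product, and $\theta=k\,id_\g$ precisely when $B_\theta=kB$.

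It follows that $\g$ carries a symmetric invertible $\theta\in Cent(\g)$ with $\theta\neq k\,id_\g$ if and only if it admits an invariant scalar product $B_\theta$ that is not proportional to $B$, that is, if and only if its quadratic dimension is larger than $2$ in the sense fixed above. Substituting this equivalence into the Theorem yields the announced one-to-one correspondence between quadratic Lie algebras of quadratic dimension larger than $2$ and quadratic Hom-Lie algebras whose twist map is an invertible element of the centroid that is not a multiple of $id_\g$.

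The only point requiring a little care is the backward direction of the bijection, namely checking that the operator $\theta$ attached to $B'$ genuinely lies in the centroid and is $B$-symmetric; both follow from the short invariance and symmetry computations above. I expect no further obstacle, because the paper's notion of quadratic dimension larger than $2$ already demands that the second form $B'$ be a scalar product (nondegenerate), so the invertibility of $\theta$ is automatic; had one worked instead with the full space of invariant symmetric forms, one would first replace a possibly degenerate $B'$ by $B+tB'$ for generic $t\in\K$, which is nondegenerate since $\det(B+tB')$ is a polynomial in $t$ not vanishing at $t=0$, and whose centroid element $id_\g+t\theta$ is invertible and not a multiple of $id_\g$.
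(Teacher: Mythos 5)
Your proposal is correct and follows essentially the same route as the paper: both arguments rest on the bijection $B'(x,y)=B(\theta(x),y)$ between invariant scalar products non-proportional to $B$ and $B$-symmetric invertible centroid elements $\theta\neq k\,id_\g$ (with centroid membership extracted from the invariance of $B$ and $B'$, symmetry of $\theta$ from symmetry of $B'$, and invertibility from nondegeneracy), after which the preceding theorem is invoked in both directions. Your closing remark on handling a possibly degenerate second form via $B+tB'$ is a harmless addition the paper does not need, since its definition of quadratic dimension larger than $2$ already assumes both forms are nondegenerate.
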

Consider a quadratic Lie algebras of  quadratic dimension larger than 2,  then  there exists $B$ and $B'$ two invariant scalar products on $\g$ such that it doesn't exist $ \lambda \in \K$ such that $B=\lambda B'.$ Since $B$ is nondegenerate then there exist $\theta\in End(\g)$ such that $B'(x,y)=B( \theta (x),y)$ and $\theta\neq k \ id _\g$ where $k\in\K$.

Observe that the invariance of$B$ and  $B'$ induces that $\theta$ is in the centroid of $\g.$ Indeed for any $x,y,z\in \g$
the identity $
B'([x,y],z)=B'(x,[y,z]) $
is equivalent  to
$B(\theta ([x,y]),z)=B(\theta (x),[y,z])=B([\theta (x),y],z).$
 It leads to $\theta ([x,y])=[\theta(x),y]$ since $B$ is non degenerate.
 Hence $B$ and $B'$ are nondegenerate implies that $\theta$ is invertible.

 Notice that $B'(x,y)=B'(y,x)$ implies $B(\theta (x),y)=B(\theta(y),x)=B(x,\theta (y)).$
 Conversely when we start with a Hom-Lie algebra where the twist map is invertible and in the centroid, we have constructed in the proof of the previous theorem two quadratic structures on this algebra.

\section{Simple and Semisimple Hom-Lie algebras}
In this section we give some observations about simple and semisimple Hom-Lie algebras.  In particular, we study simple and semisimple Hom-Lie algebras with involution. Simple involutive Hom-Lie algebras will appear in the last section when we will discuss a structure theorem of IQH-Lie algebra.

 A Hom-Lie algebra is said to be simple if it has no non-trivial ideals and is not abelian. A Hom-Lie algebra $\mathfrak{g}$  is called semisimple if its radical is zero (a radical is the  maximal solvable ideal). Equivalently, $\mathfrak{g}$ is semisimple if it does not contain any non-zero abelian ideals. In particular, a simple Hom-Lie algebra is semisimple.
In \cite{JinLi}, the authors studied  when a finite-dimensional semi-simple Lie algebra admits non-trivial hom-Lie algebra structures.

\begin{proposition}\label{HomSimple1}
Let $( \g, [\cdot,\cdot])$ be a Lie algebra and $\alpha\in Aut(\g)$. If $\g$ is simple then  $\g_\alpha$ obtained by composition method is simple.
\end{proposition}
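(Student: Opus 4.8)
The plan is to show that passing from the Lie algebra $(\g,[\ ,\ ])$ to the Hom-Lie algebra $\g_\alpha=(\g,[\ ,\ ]_\alpha,\alpha)$, where $[\ ,\ ]_\alpha=\alpha\circ[\ ,\ ]$, does not change the collection of ideals, so that simplicity is inherited. The key point is that the twist map $\alpha$ is bijective, so composing the bracket with $\alpha$ is an invertible operation that can neither create nor destroy ideals.

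First I would take an arbitrary ideal $I$ of $\g_\alpha$ and argue that it is already an ideal of the underlying Lie algebra $\g$. By the definition of an ideal of a Hom-Lie algebra we have $\alpha(I)\subseteq I$ and $[x,y]_\alpha=\alpha([x,y])\in I$ for all $x\in I$, $y\in\g$. Since $\alpha\in Aut(\g)$ is bijective and $\g$ is finite dimensional, the inclusion $\alpha(I)\subseteq I$ forces $\alpha(I)=I$: the restriction $\alpha_{|I}$ is an injective endomorphism of the finite-dimensional space $I$, hence surjective, so $\alpha^{-1}(I)=I$ as well. Applying $\alpha^{-1}$ to $\alpha([x,y])\in I$ then yields $[x,y]\in I$ for all $x\in I$, $y\in\g$; together with $\alpha(I)\subseteq I$ this says exactly that $I$ is an ($\alpha$-invariant) ideal of $(\g,[\ ,\ ])$. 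Conversely, any $\alpha$-invariant ideal of $\g$ is clearly an ideal of $\g_\alpha$, so the two notions coincide.

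Now I would invoke the simplicity of $\g$: its only ideals are $\{0\}$ and $\g$, both of which are $\alpha$-invariant. Hence the only ideals of $\g_\alpha$ are $\{0\}$ and $\g$, that is, $\g_\alpha$ has no proper nonzero ideal. It remains to check that $\g_\alpha$ is not abelian: since $\g$ is simple its bracket $[\ ,\ ]$ is nonzero, and because $\alpha$ is injective the composite $[\ ,\ ]_\alpha=\alpha\circ[\ ,\ ]$ is nonzero as well. Therefore $\g_\alpha$ is simple.

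The only genuinely delicate point is the implication $\alpha(I)\subseteq I\Rightarrow\alpha(I)=I$, which is what lets us transport ideals back and forth through $\alpha$; this is immediate in finite dimension but is the hypothesis one must not overlook. Everything else is a direct unwinding of the definition of $[\ ,\ ]_\alpha$ and of an ideal of a Hom-Lie algebra.
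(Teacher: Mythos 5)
Your proof is correct and follows essentially the same route as the paper: both arguments show that any ideal $I$ of $\g_\alpha$ is already an ideal of $(\g,[\ ,\ ])$ by upgrading $\alpha(I)\subseteq I$ to $\alpha(I)=I$, then invoke simplicity of $\g$ and the injectivity of $\alpha$ to rule out the abelian case. The only (welcome) difference is that you justify the step $\alpha(I)=I$ explicitly via finite-dimensionality, a point the paper's proof asserts without comment.
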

\begin{proof}
Indeed, let $I$ be an ideal in $\g_\alpha$, that is $[\g_\alpha,I]\subseteq I$ and $\alpha (I)\subseteq I$. Then, $\forall x\in \g$ and  $\forall a\in I$ we have $[\alpha(x),\alpha(a)]\in I$. Therefore $I$ is an ideal of $\g$ because $\alpha(I)=I  $. Hence, $I=\{0\}$ or $I=\g=\g_\alpha.$

Since $[\g,\g]\neq \{0\}$, then $[\g_\alpha,\g_\alpha]_\alpha\neq \{0\}$. Thus, $\g_\alpha$ is a simple Hom-Lie algebra.
\end{proof}
Hence the previous proposition  provides a way to construct simple Hom-Lie algebras, see example \ref{exampleSS1}.

We have proved in Proposition \ref{HomSimple1} that a simple Lie algebra with an automorphism (in particular an involution)  gives rise to a simple Hom-Lie algebra using composition method. The converse is discussed further  according to Proposition
\ref{CompoMethodLie}.

Let $(\g,[\ ,\ ] ,\theta)$ be a simple  involutive Hom-Lie algebra. We denote by $\rho (x)$, for $x\in\g $, the linear map
$\rho (x) :\g \rightarrow \g$ defined for $y\in\g$ by $\rho (x)(y)=[x,y].$

We also set $B$ to be  the map $B:\g \times \g \rightarrow \K$ defined for $x,y\in\g$ by $B(x,y)=tr (\rho (x)\rho (y)).$ Obviously $B$ is bilinear and symmetric. We show now that it is also invariant. Let $x,y,z\in\g$ then
\begin{align*}
\rho ([x,y])(z)&=[[x,y],z]=-[z,[x,y]]\\
\ &=-[\theta(\theta(z)),[x,y]]=[\theta(x),[y,\theta(z)]]+[\theta(y),[\theta(z),x]]\\
\ &=\theta([x,[\theta(y),z]])+\theta([y,[z,\theta(x)]])\\
\ &=(\theta \circ \rho (x)\circ \rho (\theta(y))-\theta \circ \rho (y)\circ \rho (\theta(x)))(z).
\end{align*}
In addition we have $\rho(\theta(x))(z)=[\theta(x),z]=\theta([x,\theta(z)])=\theta\circ\rho(x)\circ\theta(z)$ which leads for any $x\in\g$ to
$$ \rho(\theta(x))=\theta\circ\rho(x)\circ\theta.$$
Since for any $x,y\in\g$ we have
$$\rho ([x,y])=\theta \circ \rho (x)\circ \rho (\theta(y))-\theta \circ \rho (y)\circ \rho (\theta(x)),$$
it follows  for any $x,y,z\in\g$
\begin{eqnarray*}
\rho ([x,y])\rho(z)&&=(\theta \circ \rho (x)\circ \rho (\theta(y))-\theta \circ \rho (y)\circ \rho (\theta(x)))\rho(z),\\
\ &&=\theta \circ \rho (x)\circ\theta\circ\rho(y)\circ\theta\circ\rho(z)-\theta \circ \rho (y)\circ\theta\circ\rho(x)\circ\theta\circ\rho(z).
\end{eqnarray*}
Therefore
\begin{eqnarray*}
tr (\rho ([x,y])\rho(z))&&=tr (\theta \circ \rho (x)\circ\theta\circ\rho(y)\circ\theta\circ\rho(z))-tr (\theta \circ \rho (y)\circ\theta\circ\rho(x)\circ\theta\circ\rho(z)),\\
\ &&=tr (\theta\circ\rho(y)\circ\theta\circ\rho(z)\circ\theta \circ \rho (x))-tr (\theta\circ\rho(z)\circ\theta \circ \rho (y)\circ\theta\circ\rho(x)),\\
\ &&=tr( (\theta\circ\rho(y)\circ\theta\circ\rho(z)\circ\theta-\theta\circ\rho(z)\circ\theta \circ \rho (y)\circ\theta)\circ\rho(x)),\\
\ &&=tr (\rho([y,z])\circ\rho(x)),\\
\ &&=tr (\rho(x)\circ\rho([y,z])).
\end{eqnarray*}
Which proves that $B([x,y],z)=B(x,[y,z])$, that is $B$ is invariant.

Let us consider $B_\theta:\g \times\g \rightarrow \K$ defined for any $x,y\in\g$ by $B_\theta (x,y)=B(\theta (x),y).$

We have for $x,y,z\in\g$
\begin{eqnarray*}
\rho (\theta(x))\rho(y)(z)&&=[\theta(x),[y,z]]=[x,\theta([y,z])]_\theta,\\
\ &&=[\theta^2(x),[\theta(y),\theta(z)]]_\theta,\\
\ &&=[\theta^2(x),[\theta^2(y),\theta^2(z)]_\theta]_\theta,\\
\ &&=[x,[y,z]_\theta]_\theta.
\end{eqnarray*}
Hence $\rho (\theta(x))\rho(y)=ad_{\g_\theta}(x)\circ ad_{\g_\theta}(y),$ where $g_\theta$ is the Lie algebra associated to $(\g,[\ ,\ ],\theta).$

We conclude that $B_\theta$ is the Killing form of the Lie algebra $g_\theta$.

Then $B_\theta=\mathcal{K}$ and  for $x,y\in\g,$  $B(x,y)=\mathcal{K}(\theta(x),y)=\mathcal{K}_\theta(x,y).$

\begin{theorem}\label{SimpleInvolutiveHomLie}
Let $(\g,[\ ,\ ] ,\theta)$ be a simple  involutive Hom-Lie algebra then $g_\theta=(\g,[\ ,\ ]_\theta )$ is either a simple Lie algebra or a semisimple Lie algebra.
Moreover in the second case it decomposes into $S\oplus \theta (S)$ where $S$ is a simple ideal.

In addition  the form  $B:\g \times \g \rightarrow \K$, defined for $x,y\in\g$ by $B(x,y)=tr (\rho (x)\rho (y))$  where $\rho(x)=[x,\cdot ]$, defines a quadratic structure on  $(\g,[\ ,\ ] ,\theta)$ and $B(x,y)=\mathcal{K}(\theta (x),y)$, where $\mathcal{K}$ is the Killing form of the Lie algebra $g_\theta=(\g,[\ ,\ ]_\theta)$.

Conversely, if $(\g, [\, \ ])$ is a simple Lie algebra and $\theta\in Aut(\g)$  is an involution then $(\g,[\ ,\ ], \theta)$ is a simple Hom-Lie algebra.
\end{theorem}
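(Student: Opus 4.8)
The plan is to untwist the Hom-Lie structure into a Lie algebra and to transport the simplicity hypothesis through a dictionary between Hom-Lie ideals and $\theta$-invariant Lie ideals. Since $\theta$ is an involutive automorphism, Proposition~\ref{CompoMethodLie} and its remark give that $\g_\theta=(\g,[\ ,\ ]_\theta)$, with $[x,y]_\theta=[\theta(x),\theta(y)]$, is a Lie algebra, and a direct check shows $\theta$ remains an involutive automorphism of $\g_\theta$. The key observation is an ideal correspondence: if $I$ is $\theta$-invariant (note $\theta(I)\subseteq I$ forces $\theta(I)=I$ because $\theta^2=id$), then $[I,\g]\subseteq I$ holds if and only if $[I,\g]_\theta\subseteq I$, since $[a,x]_\theta=[\theta(a),\theta(x)]$ and $[a,x]=[\theta(\theta(a)),\theta(\theta(x))]=[\theta(a),\theta(x)]_\theta$. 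Hence the ideals of the Hom-Lie algebra $(\g,[\ ,\ ],\theta)$ are exactly the $\theta$-invariant ideals of $\g_\theta$, and simplicity of the Hom-Lie algebra means $\g_\theta$ admits no $\theta$-invariant ideal other than $\{0\}$ and $\g$.

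Next I would prove that $\g_\theta$ is semisimple. Its radical $R$ is a characteristic ideal, hence $\theta(R)=R$, so $R\in\{\{0\},\g\}$. If $R=\g$ then $\g_\theta$ is solvable; its derived ideal is again characteristic, thus $\theta$-invariant, and solvability forces it to be $\{0\}$, i.e. $\g_\theta$ is abelian. But $[x,y]_\theta=0$ for all $x,y$ gives $[\g,\g]=0$ (as $\theta$ is bijective), contradicting that a simple Hom-Lie algebra is non-abelian; therefore $R=\{0\}$ and $\g_\theta$ is semisimple. Writing $\g_\theta=S_1\oplus\cdots\oplus S_k$ as the sum of its simple (minimal) ideals, the automorphism $\theta$ permutes the $S_i$ through an involution $\sigma$ of $\{1,\dots,k\}$. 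Every union of $\sigma$-orbits spans a $\theta$-invariant ideal, so the absence of proper $\theta$-invariant ideals forces $\sigma$ to have a single orbit; since $\sigma^2=id$ this orbit has one or two elements. Either $k=1$ and $\g_\theta$ is simple, or $k=2$ with $\theta(S_1)=S_2$, giving the decomposition $\g_\theta=S\oplus\theta(S)$ with $S=S_1$ simple.

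For the quadratic structure, the computation preceding the statement already establishes that $B(x,y)=tr(\rho(x)\rho(y))$ is symmetric and invariant and that $B_\theta=\mathcal{K}$, the Killing form of $\g_\theta$, so that $B(x,y)=\mathcal{K}(\theta(x),y)$. Now that $\g_\theta$ is semisimple, Cartan's criterion makes $\mathcal{K}$ nondegenerate, whence $B$ is nondegenerate since $\theta$ is bijective. The compatibility $B(\theta(x),y)=B(x,\theta(y))$ reduces, using $B=\mathcal{K}(\theta(\cdot),\cdot)$ and $\theta^2=id$, to the automorphism-invariance $\mathcal{K}(\theta(x),\theta(y))=\mathcal{K}(x,y)$ of the Killing form; this finishes the verification that $(\g,[\ ,\ ],\theta,B)$ is an IQH-Lie algebra. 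The converse is immediate from Proposition~\ref{HomSimple1}: an involution $\theta\in Aut(\g)$ of a simple Lie algebra is in particular an automorphism, so the Hom-Lie algebra obtained from $\g$ by the composition method is simple.

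I expect the main obstacle to be the orbit analysis of $\theta$ on the simple factors together with the surrounding ideal bookkeeping: one must argue cleanly that, in a semisimple Lie algebra, every ideal is a sum of simple factors, that $\theta$-invariant ideals correspond to $\sigma$-stable sets of factors, and that the nonexistence of proper such ideals forces $\sigma$ to be transitive and hence $k\le 2$. The remaining ingredients are either classical Lie theory (characteristic radical, Cartan's criterion) or were already carried out in the computation that precedes the theorem.
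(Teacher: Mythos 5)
Your proposal is correct (granting, as the paper itself does throughout Section 6, that the involution $\theta$ is multiplicative, i.e.\ an automorphism for $[\ ,\ ]$; without this neither your untwisting $[x,y]_\theta=[\theta(x),\theta(y)]$ nor the paper's own computations make sense), but it reaches the structural dichotomy by a genuinely different route. The paper argues directly and more elementarily: assuming $\g_\theta$ is not simple, it picks a \emph{minimal} ideal $S$ of $\g_\theta$, shows $[\g_\theta,S]_\theta=S$ (otherwise $\theta(S)$ would sit in the center, contradicting simplicity of the Hom-Lie algebra), deduces that $S+\theta(S)$ is a Hom-Lie ideal and hence equals $\g$, shows $S\cap\theta(S)=\{0\}$ (otherwise $S$ itself would be a $\theta$-invariant ideal, i.e.\ a Hom-Lie ideal), and so obtains $\g_\theta=S\oplus\theta(S)$ with $S$ simple; semisimplicity falls out of this explicit decomposition, with no appeal to radical theory or to the structure theorem for semisimple Lie algebras. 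You instead first prove semisimplicity abstractly (the radical and the derived ideal are characteristic, hence $\theta$-invariant, hence trivial by your ideal dictionary), and only then invoke the decomposition of a semisimple Lie algebra into simple ideals together with an orbit analysis of the permutation that $\theta$ induces on the factors; this is conceptually cleaner and makes the case split ($k=1$ versus $k=2$ with $\theta(S_1)=S_2$) transparent, at the cost of consuming more classical machinery. Two points your write-up makes explicit that the paper leaves implicit are welcome: the precise correspondence between ideals of the Hom-Lie algebra $(\g,[\ ,\ ],\theta)$ and $\theta$-invariant ideals of $\g_\theta$, and the verification of the compatibility condition $B(\theta(x),y)=B(x,\theta(y))$ via automorphism-invariance of the Killing form, which the paper never checks. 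Finally, for the converse you read the statement, exactly as the paper does in the paragraph following the theorem, as asserting simplicity of the \emph{composed} Hom-Lie algebra $(\g,[\ ,\ ]_\theta,\theta)$ and quote Proposition \ref{HomSimple1}; this reading is in fact forced, since with the original bracket the triple $(\g,[\ ,\ ],\theta)$ need not even satisfy the Hom-Jacobi identity (e.g.\ $\mathfrak{sl}_3(\K)$ with $\theta(x)=-{}^t x$).
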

\begin{proof}
Assume that $g_\theta$ is not simple, then $g_\theta$ contains a minimal ideal $S$. Notice that $g_\theta$ is equal to $\g$ as a linear space, we refer by $\g$ to the Hom-Lie algebra and by $\g_\theta$ for the corresponding Lie algebra by composition.  Recall that $I$ is a minimal ideal of $g_\theta$ if $I\neq\{0\}$, $I\neq\g_\theta$ and if $J$ is another ideal of $g_\theta$ such that $J\subseteq I$ then $J=\{0\}$ or $J=I.$

We do have $[g_\theta,S]_\theta$ is an ideal of $g_\theta$  such that $[g_\theta,S]_\theta\subseteq S$. Since $S$ is minimal then $[g_\theta,S]_\theta=\{0\}$ or $[g_\theta,S]_\theta= S.$

If $[g_\theta,S]_\theta=\{0\}$ then $[\theta(g_\theta),\theta(S)]=\{0\}$ which implies $[\g,\theta(S)]=\{0\}$ and $\theta(S)\subseteq\mathcal{Z}(g).$ Hence $\mathcal{Z}(g)\neq\{0\}$ since $\theta(S)\neq\{0\}$. This contradicts the fact that $(\g,[\ ,\ ],\theta)$ is a simple Hom-Lie algebra.

Consequently we have  $[g_\theta,S]_\theta= S$. It follows $[\g,\theta(S)]=S$, then  $\theta([\g,\theta(S)])=\theta(S).$ Thus $[\theta(\g),\theta^2(S)])=[\g,S]=\theta(S).$ In addition, $\theta(S+\theta(S))=\theta(S)+\theta^2(S)=\theta(S)+S.$

Then $S+\theta(S)$ is an ideal of  $(\g,[\ ,\ ] ,\theta)$ and $S+\theta(S)\neq\{0\}$. Therefore $\g=S+\theta(S).$

We show that we have a direct sum. Since $\theta$ is an automorphism of $\g_\theta$ then $\theta(S)$ is an ideal of $\g_\theta$.Thus $S\bigcap \theta (S)=\{0\}$ or $S\bigcap \theta (S)=S$ since $S$ is a minimal ideal in $g_\theta.$
Assume $S\bigcap \theta (S)=S$, then $S\subseteq \theta (S)$ which leads to   $S= \theta (S)$ since $\theta$ is bijective.

We do have
$[\g,S]=\theta([\theta(\g),\theta(S)]=\theta([\g, S]_\theta)\subseteq\theta(S)=S$
 and $\theta(S)=\theta^2(S)=S.$ Then $S$ is an ideal of $(\g,[\ ,\ ] ,\theta)$. Thus $S=\g$ which contradicts the assumption $S\neq\g$.  Hence $S\bigcap \theta (S)=\{0\}$ and $\g=S\oplus\theta(S).$

 Consequently, we do have $\g_\theta=S\oplus\theta(S).$ Indeed $\theta$ is an automorphism of $\g$ then $\theta$ is an automorphism of $\g_\theta$. Therefore $\theta(S)$ is an ideal of $\g$. We do have $[\g_\theta,S]_\theta=S$ then $[S\oplus\theta(S),S]_\theta=S$ which implies $[S,S]_\theta=S$ since $[\theta(S),S]_\theta=\{0\}.$

 Therefore $S$ is a simple ideal because $S$ is a minimal ideal such that $[S,S]_\theta=S.$

 It follows that $\g_\theta$ is a semisimple Lie algebra because $S$ and $\theta(S)$ are simple ideals of $\g_\theta.$ We may view $\g_\theta$ as a $\mathbb{Z}_2$-graded simple Lie algebra.

 We have also that $\mathcal{K} $ is nondegenerate and consequently $B$ is non degenerate since $B(x,y)=\mathcal{K}(\theta(x),y)$ for $x,y\in\g$.
\end{proof}

We have proved that if $(\g , [\ ,\ ] )$ is a simple Lie algebra with involution  $\theta$ then $(\g_\theta , [\ ,\ ]_\theta ,\theta )$ is a simple Hom-Lie algebra. We have also proved that if $(\g , [\ ,\ ] ,\theta )$ is a simple Hom-Lie algebra then the Lie algebra $(\g_\theta , [\ ,\ ]_\theta )$ is either simple or $\g_\theta=S\oplus\theta (S)$, where $S$ is a simple ideal of the Lie algebra  $(\g_\theta , [\ ,\ ]_\theta )$, (in particular  $(\g_\theta , [\ ,\ ]_\theta )$ is a semisimple Lie algebra.

\begin{proposition}\label{Prop6.3} 
 Let $(\g , [\ ,\ ] )$ is a semisimple Lie algebra different from $\{0\}$ with an involution $\theta$ such that   $\g=S+\theta (S)$, where $S$ is a simple ideal of   $(\g , [\ ,\ ] )$. Then the Hom-Lie algebra $(\g_\theta , [\ ,\ ]_\theta ,\theta )$ is simple.
\end{proposition}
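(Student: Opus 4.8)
The plan is to reduce the classification of ideals of the Hom-Lie algebra $\g_\theta$ to the classification of $\theta$-stable ideals of the underlying semisimple Lie algebra $(\g,[\ ,\ ])$, and then to exploit the simplicity of $S$ together with the fact that $\theta$ interchanges $S$ and $\theta(S)$.

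First I would record that $\g_\theta=(\g,[\ ,\ ]_\theta,\theta)$, with $[x,y]_\theta=\theta([x,y])=[\theta(x),\theta(y)]$, is a Hom-Lie algebra by Theorem \ref{thmYauConstrHomLie}, and that it is not abelian: since $\g$ is semisimple and nonzero, $[\g,\g]=\g$, whence $[\g,\g]_\theta=\theta([\g,\g])=\theta(\g)=\g\neq\{0\}$.

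Next comes the key reduction. Let $I$ be an ideal of $\g_\theta$, so that $\theta(I)\subseteq I$ and $[x,y]_\theta\in I$ for all $x\in I$, $y\in\g$. Because $\theta$ is an involution, applying $\theta$ to $\theta(I)\subseteq I$ gives $I\subseteq\theta(I)$, so in fact $\theta(I)=I$. Writing $[x,y]_\theta=[\theta(x),\theta(y)]$ and letting $w=\theta(x)$ range over $\theta(I)=I$ while $z=\theta(y)$ ranges over $\theta(\g)=\g$, the bracket condition becomes $[w,z]\in I$ for all $w\in I$, $z\in\g$; that is, $I$ is an ordinary Lie ideal of $\g$ which is moreover $\theta$-stable. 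Thus the ideals of $\g_\theta$ are exactly the $\theta$-stable ideals of the Lie algebra $\g$.

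It then remains to determine those $\theta$-stable ideals, and I would split into two cases according to whether $\theta(S)=S$. Since $S$ is a simple (hence minimal) ideal and $\theta\in Aut(\g)$, the image $\theta(S)$ is again a simple ideal, and $S\cap\theta(S)$ is an ideal of $\g$ contained in the simple ideal $S$, so $S\cap\theta(S)\in\{\{0\},S\}$. If $S\cap\theta(S)=S$, then $S=\theta(S)$ and $\g=S$ is simple, whose only ideals $\{0\}$ and $\g$ are trivially $\theta$-stable. Otherwise $S\cap\theta(S)=\{0\}$ and $\g=S\oplus\theta(S)$ is a direct sum of two simple ideals; a standard argument (any ideal $J$ of the semisimple $\g$ satisfies $J=[\g,J]=(S\cap J)\oplus(\theta(S)\cap J)$, with $S\cap J$ and $\theta(S)\cap J$ ideals of the simple summands) shows the only ideals of $\g$ are $\{0\},S,\theta(S),\g$. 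Of these, $\theta$ interchanges $S$ and $\theta(S)$ (as $\theta^2(S)=S\neq\theta(S)$), so neither is $\theta$-stable, leaving only $\{0\}$ and $\g$. In either case $\g_\theta$ has no proper nonzero ideal and is not abelian, hence is simple. The main obstacle is precisely the middle reduction: one must verify carefully that the Hom-Lie ideal axioms translate into ``$\theta$-stable Lie ideal,'' which hinges on $\theta$ being an involutive automorphism so that $\theta(I)=I$ and $\theta(\g)=\g$ can be used to strip the twists off $[\ ,\ ]_\theta$; the subsequent ideal count is routine semisimple structure theory.
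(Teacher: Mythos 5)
Your proof is correct and follows essentially the same route as the paper: an ideal $I$ of $\g_\theta$ satisfies $\theta(I)=I$ (by involutivity of $\theta$) and is therefore a $\theta$-stable Lie ideal of $\g$, which by the structure $\g=S+\theta(S)$ forces $I=\{0\}$ or $I=\g$, with non-abelianness following from $[\g,\g]=\g$. You are in fact slightly more careful than the paper, which tacitly assumes $S\neq\theta(S)$ when it asserts $I\neq S$ and $I\neq\theta(S)$; your explicit case split on whether $\theta(S)=S$, together with the spelled-out classification of ideals of $S\oplus\theta(S)$, closes that small gap.
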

\begin{proof}
Let $I$ be an ideal of $\g_\theta$ such that $I\neq \{0\}$. Since $\theta (\g )=\g$ and $\g =\g_\theta$  then  $[\g ,\theta (I )]\subseteq I$ which implies that $[\g ,I]\subseteq I$. It follows that $I$ is an ideal of $\g$. Consequently  $I=S$ or $I=\theta (S)$ or $I=\g$. Since $\theta (I)=I$  then $I\neq S$ and $I\neq \theta (S)$, therefore $I=S+\theta (S)=\g$. Moreover $\g$ is semisimple then $[\g ,\g]=\g$ which implies $[\g_\theta ,\g_\theta]_\theta=\g_\theta$.Then $(\g_\theta , [\ ,\ ]_\theta ,\theta )$ is a simple Hom-Lie algebra.
\end{proof}

More generally,  let $(\g , [\ ,\ ] ,\theta )$ be an involutive  Hom-Lie algebra. We consider the associated Lie algebra $\g_\theta=(\g,[\ ,\ ]_\theta)$ where $[x,y]_\theta=[\theta(x),\theta(y)]$, $\forall x,y\in\g$. Let $\mathfrak{R}(\g)$ be the solvable radical of  $\g_\theta$. By Taft's Theorem (see \cite{taft}), there exists a Levi's component $\mathfrak{s}$ of  $\g_\theta$ invariant by $\theta$. It is clear that $\theta(\mathfrak{R}(\g))=\mathfrak{R}(\g)$.

The component $\mathfrak{s}$  is a semisimple Lie algebra then it may be written  $\mathfrak{s}=\mathfrak{s}_1\oplus\cdots \oplus \mathfrak{s}_n$ where $\{ \mathfrak{s}\}_{1\leq i\leq n}$ is the set of the simple ideals of $\mathfrak{s}$. The fact $\theta (\mathfrak{s})\subseteq \mathfrak{s}$ implies that $\theta_{| \mathfrak{s}}$ is an involutive automorphism of $\mathfrak{s}.$ Then for $i\in\{1,\cdots , n\}$ we have $\theta (\mathfrak{s}_i)$ is a simple ideal of the Lie algebra $\mathfrak{s}.$ Therefore for any $i\in\{1,\cdots , n\}$ there exist a unique $j_i\in\{1,\cdots , n\}$ such that $\theta (\mathfrak{s}_i)=\mathfrak{s}_{j_i}$. If $i=j_i$ then $\theta (\mathfrak{s}_i)=\mathfrak{s}_{i}$ and in  this case $(\mathfrak{s}_i,{ [\ ,\ ]_\theta}_{|\mathfrak{s}_i\times \mathfrak{s}_i})$ is a simple Lie algebra with $\theta_i=\theta_{|\mathfrak{s}_i}$ is an involutive automorphism of $\mathfrak{s}_i$. We can then assume that there exist $m,p\in \mathbb{N}$ such that $m+2p= n$ and
$$\mathfrak{s}=\mathfrak{s}_1\oplus\cdots\oplus\mathfrak{s}_m\oplus (\mathfrak{s}_{m+1}\oplus\theta(\mathfrak{s}_{m+1}))\oplus\cdots\oplus (\mathfrak{s}_{m+p}\oplus\theta(\mathfrak{s}_{m+p}))$$
where,  for $i\in \{1,\cdots ,m+p\}$, $\mathfrak{s}_i$ are simple ideals of $\mathfrak{s}$ and for $i\in \{1,\cdots ,m\}$ we have $\theta (\mathfrak{s}_i)=\mathfrak{s}_i.$

Let us set, for $i\in \{1,\cdots ,m\}$,  $ \mathfrak{S}_i=\mathfrak{s}_i$ and for $i\in \{1,\cdots ,p\}$,  $ \mathfrak{S}_{m+i}=\mathfrak{s}_{m+i}\oplus\theta(\mathfrak{s}_{m+i})$. Then, for $i\in \{1,\cdots ,m\}$,  $ \mathfrak{S}_i$ is a simple ideal of $\mathfrak{s}$ that is invariant by $\theta$. Also, for $i\in \{1,\cdots ,p\}$,  $ \mathfrak{S}_{m+i}$ is a semisimple ideal of $\mathfrak{s}$ that is invariant by $\theta$.

Since, for $x,y\in\g$,  $[x,y]=[\theta(x),\theta(y)]_\theta$ then it is clear that
\begin{itemize}
\item $\mathfrak{R}(\g)$ is a solvable ideal of the Hom-Lie algebra $(\g, [\ ,\ ],\theta)$.
\item $\mathfrak{s}$ is a subalgebra of the Hom-Lie algebra $(\g, [\ ,\ ],\theta)$.
\item For $i\in \{1,\cdots ,m+p\}$
\begin{enumerate}
\item $ \mathfrak{S}_i$ is a simple ideal of the Hom-Lie algebra $(\mathfrak{s},{ [\ ,\ ]_\theta}_{|\mathfrak{s}\times \mathfrak{s}},\theta _{|\mathfrak{s}})$ is a simple
\item $ \mathfrak{S}_i$ is a simple subalgebra of the Hom-Lie algebra $(\g, [\ ,\ ],\theta)$.
\end{enumerate}
\end{itemize}
\begin{proposition}
$\mathfrak{R}(\g)$ is the greatest solvable ideal of the Hom-Lie algebra $(\g, [\ ,\ ],\theta)$.
\end{proposition}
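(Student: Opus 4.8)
The plan is to transfer everything to the Lie algebra $\g_\theta$, exploiting that $\theta$ is an involution (so $\theta^{-1}=\theta$ and $\theta(\g)=\g$) which is at the same time a multiplicative automorphism of both the Hom-Lie bracket and of $[\ ,\ ]_\theta$. The single lever underlying the whole argument is the identity
$$[A,B]_\theta=[\theta(A),\theta(B)]=[A,B]\qquad\text{for all }\theta\text{-invariant subspaces }A,B\subseteq\g,$$
which says that the two brackets \emph{agree} on $\theta$-invariant subspaces.

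First I would record the correspondence of ideals. If $I$ is a $\theta$-invariant subspace, then $[\g,I]_\theta=[\theta(\g),\theta(I)]=[\g,I]$, so $I$ is an ideal of the Hom-Lie algebra $(\g,[\ ,\ ],\theta)$ (the Hom-Lie notion automatically incorporating $\theta(I)\subseteq I$) if and only if $I$ is an ideal of $\g_\theta$. Since, by definition, every ideal of the Hom-Lie algebra is $\theta$-invariant, this equivalence applies to all of them. Next I would compare the two derived series. Writing $D^{n}$ for the Hom-Lie derived series and $D^{n}_\theta$ for that of $\g_\theta$, both begin at $\g$, and each derived term is $\theta$-invariant because $\theta$ is multiplicative for both brackets, e.g. $\theta([A,A])=[\theta(A),\theta(A)]=[A,A]$. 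Combined with the displayed identity, an immediate induction yields $D^{n}=D^{n}_\theta$ for every $n$. Consequently any $\theta$-invariant subalgebra — in particular any ideal of the Hom-Lie algebra regarded with its own restricted structure — is solvable as a Hom-Lie algebra if and only if it is solvable as a subalgebra of $\g_\theta$.

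With these two facts the statement falls out. On the one hand $\mathfrak{R}(\g)$ is $\theta$-invariant: since $\theta$ is an automorphism of $\g_\theta$, the image $\theta(\mathfrak{R}(\g))$ is again a solvable ideal of $\g_\theta$, hence contained in $\mathfrak{R}(\g)$ by maximality, and equality follows from bijectivity. By the ideal correspondence it is therefore an ideal of the Hom-Lie algebra, and by the solvability equivalence it is solvable there, being solvable in $\g_\theta$. On the other hand, if $J$ is \emph{any} solvable ideal of the Hom-Lie algebra, then $J$ is $\theta$-invariant, so it is an ideal of $\g_\theta$ and solvable in $\g_\theta$ as well; maximality of the radical forces $J\subseteq\mathfrak{R}(\g)$. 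Thus $\mathfrak{R}(\g)$ is a solvable ideal of $(\g,[\ ,\ ],\theta)$ containing every solvable ideal, i.e. it is the greatest one.

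The steps are essentially bookkeeping with the involution, and the only genuinely delicate point is the middle paragraph: one must make sure that ``solvable'' is an intrinsic notion matching across the two brackets. The expected obstacle is precisely verifying $D^{n}=D^{n}_\theta$ cleanly, i.e. checking that derived terms stay $\theta$-invariant so that the bracket-agreement identity can be reapplied at each stage; once that induction is secured, the maximality statement is automatic.
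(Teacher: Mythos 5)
Your proof is correct and takes essentially the same route as the paper: every ideal of the Hom-Lie algebra is $\theta$-invariant, the two brackets agree on $\theta$-invariant subspaces (so ideal-ness and the derived series transfer), hence any solvable Hom-Lie ideal is a solvable ideal of $\g_\theta$ and lies in $\mathfrak{R}(\g)$, while $\theta$-invariance of $\mathfrak{R}(\g)$ makes it a solvable ideal of $(\g,[\ ,\ ],\theta)$ itself. The paper compresses this into one line (with the solvability of $\mathfrak{R}(\g)$ noted in the remarks just before the proposition); your write-up merely makes the ideal correspondence and derived-series induction explicit.
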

\begin{proof}Indeed, if $I$ is a solvable ideal of $(\g, [\ ,\ ],\theta)$, then $I$ is a solvable ideal of the Lie algebra $\g_\theta$, because $\theta(I)=I$. Consequently $I\subseteq \mathfrak{R}(\g)$.
\end{proof}
\begin{corollary}\label{cor6.5}Let $(\g, [\ ,\ ],\theta)$ be an involutive Hom-Lie algebra then
$$\g= \mathfrak{S}_1\oplus\cdots \mathfrak{S}_{m+p}\oplus  \mathfrak{R}(\g)$$
where
\begin{itemize}
\item for $i\in \{1,\cdots ,m+p\}$, $ \mathfrak{S}_i$ is a simple subalgebra of $(\g, [\ ,\ ],\theta)$,
\item $\mathfrak{R}(\g)$ is the greatest solvable ideal of $(\g, [\ ,\ ],\theta)$.
\end{itemize}
\end{corollary}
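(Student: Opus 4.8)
The plan is to assemble the corollary directly from the structure theory developed in the preceding paragraphs, since the substantive work has already been carried out there; what remains is to collect three ingredients. First I would recall that passing to the associated Lie algebra $\g_\theta=(\g,[\ ,\ ]_\theta)$ and invoking Taft's theorem yields a $\theta$-invariant Levi decomposition $\g_\theta=\mathfrak{s}\oplus\mathfrak{R}(\g)$, a direct sum of vector spaces in which $\mathfrak{s}$ is a semisimple Levi factor stable under $\theta$ and $\mathfrak{R}(\g)$ is the solvable radical, also stable under $\theta$. This furnishes the ambient splitting.

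Next I would refine the semisimple part. Decomposing $\mathfrak{s}$ into its simple ideals and grouping them according to the action of the involution gives $\mathfrak{s}=\mathfrak{S}_1\oplus\cdots\oplus\mathfrak{S}_{m+p}$, where for $i\le m$ the ideal $\mathfrak{S}_i=\mathfrak{s}_i$ is fixed by $\theta$, and for $i=m+j$ with $j\le p$ the ideal $\mathfrak{S}_{m+j}=\mathfrak{s}_{m+j}\oplus\theta(\mathfrak{s}_{m+j})$ is a $\theta$-swapped pair. Combining this with the Levi splitting produces the claimed vector-space decomposition $\g=\mathfrak{S}_1\oplus\cdots\oplus\mathfrak{S}_{m+p}\oplus\mathfrak{R}(\g)$; since all summands are $\theta$-stable and closed under $[\ ,\ ]_\theta$, they are closed under the Hom-Lie bracket $[x,y]=[\theta(x),\theta(y)]_\theta$ as well, so each is genuinely a subalgebra of $(\g,[\ ,\ ],\theta)$.

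The two itemized claims then follow from results already in hand. For the simplicity of the $\mathfrak{S}_i$ as sub-Hom-Lie algebras, I would split into two cases: when $i\le m$, the associated Lie algebra is the simple Lie algebra $\mathfrak{s}_i$ equipped with the involution $\theta_{|\mathfrak{s}_i}$, so the converse direction of Theorem \ref{SimpleInvolutiveHomLie} gives that $\mathfrak{S}_i$ is a simple Hom-Lie algebra; when $i=m+j$, the associated Lie algebra is the semisimple algebra $\mathfrak{s}_{m+j}\oplus\theta(\mathfrak{s}_{m+j})=S\oplus\theta(S)$ with $S=\mathfrak{s}_{m+j}$ simple, so Proposition \ref{Prop6.3} applies and again yields simplicity. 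Finally, the assertion that $\mathfrak{R}(\g)$ is the greatest solvable ideal of $(\g,[\ ,\ ],\theta)$ is precisely the proposition established immediately above.

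The only point requiring genuine care---though it is bookkeeping rather than a real obstacle---is the consistency of brackets: one must check that restricting the Hom-Lie bracket $[\ ,\ ]$ and the twist $\theta$ to each $\mathfrak{S}_i$ reproduces exactly the sub-Hom-Lie algebra whose associated Lie algebra is the isolated simple or semisimple piece of $\mathfrak{s}$, so that Theorem \ref{SimpleInvolutiveHomLie} and Proposition \ref{Prop6.3} may be invoked verbatim. Because $\theta^2=id$ makes the passage between $[\ ,\ ]$ and $[\ ,\ ]_\theta$ an involution on brackets, this identification is immediate, and the corollary reduces to collecting the three ingredients above.
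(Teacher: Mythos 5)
Your proof is correct and takes essentially the same route as the paper: Corollary \ref{cor6.5} is stated there without a separate proof because it simply collects the discussion immediately preceding it---Taft's theorem giving a $\theta$-invariant Levi decomposition of $\g_\theta$, the grouping of the simple ideals of the Levi component into $\theta$-fixed ones and $\theta$-swapped pairs $\mathfrak{s}_i\oplus\theta(\mathfrak{s}_i)$, and the proposition identifying $\mathfrak{R}(\g)$ as the greatest solvable ideal of $(\g,[\ ,\ ],\theta)$---which is exactly what you assemble. Your explicit appeal to the converse part of Theorem \ref{SimpleInvolutiveHomLie} and to Proposition \ref{Prop6.3} for the simplicity of the $\mathfrak{S}_i$ as Hom-subalgebras merely makes precise what the paper asserts as ``clear''.
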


 \section{Double extension Theorems of Hom-Lie algebras}
 The  fundamental result on quadratic Lie algebras in \cite{MedinaRevoy} leads to  constructing and characterizing  quadratic Lie algebras   using double extension. While  $T^*$-extension concept  introduced in  \cite{Bordemann97}  concerns nonassociative algebras with nondegenerate associative symmetric bilinear form.

The following theorem extends the double extension by one-dimensional Lie algebras to  Hom-Lie algebras case.

 \begin{theorem}\label{DoubleExtension0}
 Let $(V,[\ ,\ ]_{V},\alpha_V,B_V)$ be a quadratic multiplicative Hom-Lie algebra and   $\delta: V\rightarrow V$ be a linear map. Set  $x_0\in V$, $\lambda,\lambda_0 \in\K$.  We denote, for $x\in V$,  $\rho_V(x):=[x,\cdot]_V$ and the bracket of two linear maps stands for the commutator on $End(V).$

 If hold the following conditions
 \begin{eqnarray}\label{DE1}
 && \alpha_V \circ \delta \circ \alpha_V-\lambda \delta=\rho_V(x_0),\\\label{DE2}
 &&  [\alpha_V,\delta^2]=\rho_V(\delta (x_0)),\\\label{DE3}
&& \forall x,y\in V, \ \  (\lambda\delta+\rho_V(x_0))([x,y]_V)=[\delta (x),\alpha_V(y)]_V+[\alpha_V(x),\delta (y)]_V,
 \end{eqnarray}
 then the vector space $\g:=\K b\oplus V \oplus\K e$, where $\K b$  and  $\K e$ are  one-dimensional vector spaces, is a multiplicative Hom-Lie algebra with the bracket $[\ ,\ ]:\g \times \g \rightarrow\g$ defined by
 \begin{align*}
 [x,y]&=[x,y]_V+B_V(\delta(x),y)\quad \forall x,y\in V\\
 [b,x]&=\delta (x) \quad \forall x\in V\\
 [e,z]&=0\quad \forall z\in\g
 \end{align*}
 and the linear map $\alpha:\g \rightarrow \g$ defined by
  \begin{align*}
\alpha (x)&=\alpha_V(x)+ B_V(x_0,x) e\quad \forall x\in V \\
\alpha (b)&=\lambda b +x_0+\lambda_0 e \\
\alpha (e)&=\lambda e
 \end{align*}

 In addition the symmetric bilinear form $B:\g \times \g \rightarrow\K$ defined by
 \begin{align*}
 B(x,y)&=B_V(x,y)\quad \forall x,y\in V\\
 B(b,e)&=1\\
 B(x,b)&=B(x,e )=0\quad \forall x\in V\\
 B(b,b)&=B(e, e)=0
 \end{align*}
 is an invariant scalar product on $(\g, [\ ,\ ],\alpha )$.

 In the particular case,  when $\alpha_V$ is invertible then $\alpha$ is invertible if and only if $\lambda\neq 0$ and when $\alpha_V$ is an involution, then $\alpha$ is an involution if and only if
 \begin{equation}\label{DE4}
 \lambda \in\{-1,1\},\ \  \alpha(x_0)=-\lambda x_0\  \ \text{ and  } \lambda_0=-\frac{1}{2\lambda}B_V(x_0,x_0).
 \end{equation}

 \end{theorem}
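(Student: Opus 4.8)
The plan is to treat $\g=\K b\oplus V\oplus\K e$ as a one-dimensional double extension and verify the assertions in turn, reducing everything to computations on the three ``types'' of generators $b$, $x\in V$, $e$, and exploiting that $\K e$ is central (indeed $[e,\g]=0$) while $b$ acts on $V$ through $\delta$. First I would record the elementary facts that organize all later bookkeeping: every bracket lands in $V\oplus\K e$ (no bracket has a $b$-component), the $\K e$-coefficient of $[x,y]$ is exactly $B_V(\delta(x),y)$, and $\alpha$ is ``block upper triangular'' for the flag $\K e\subset\K e\oplus V\subset\g$, with diagonal pieces $\lambda$, $\alpha_V$, $\lambda$.

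Skew-symmetry is the first check: on $V\times V$ it follows from skew-symmetry of $[\ ,\ ]_V$ together with the fact that $\delta$ is skew-adjoint for $B_V$ (so that $B_V(\delta(x),y)=-B_V(\delta(y),x)$), and on the mixed brackets it is built into the definition ($[x,b]:=-\delta(x)$, $[e,\cdot]=0$). The heart of the argument is the Hom-Jacobi identity $\circlearrowleft_{u,v,w}[\alpha(u),[v,w]]=0$, which I would verify by splitting each cyclic sum into its $V$-component and its $\K e$-component and treating the cases according to how many of $u,v,w$ equal $b$. Cases containing $e$ vanish at once by centrality. For a triple $(x,y,z)$ in $V$ the $V$-component is the Hom-Jacobi identity of $V$, while the $\K e$-component $\circlearrowleft B_V(\delta\alpha_V(x),[y,z]_V)$ collapses to $0$ using invariance of $B_V$ and skew-adjointness of $\delta$. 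For a triple $(b,x,y)$ the $V$-component is exactly
\[
(\lambda\delta+\rho_V(x_0))([x,y]_V)-[\delta(x),\alpha_V(y)]_V-[\alpha_V(x),\delta(y)]_V,
\]
which is annihilated precisely by \eqref{DE3}; the remaining scalar component, and the subcases where $b$ contributes a second $\delta$, are where \eqref{DE1} and \eqref{DE2} enter. Carrying out this matching of surviving terms to the three structural conditions is the main obstacle, essentially the only genuinely computational step.

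Multiplicativity $\alpha([u,v])=[\alpha(u),\alpha(v)]$ I would again check per type. On $V\times V$ the $V$-part is multiplicativity of $\alpha_V$, and the $\K e$-part reduces, via self-adjointness of $\alpha_V$ (condition \eqref{AlphaComp} for the quadratic structure on $V$) and invariance of $B_V$, to the identity $B_V(\alpha_V\delta\alpha_V(x),y)=\lambda B_V(\delta(x),y)+B_V(x_0,[x,y]_V)$, which is \eqref{DE1} paired against $B_V$; the $b\times V$ case invokes \eqref{DE1} (and \eqref{DE2}), and the $e$-cases are immediate. For the bilinear form, symmetry and nondegeneracy follow from nondegeneracy of $B_V$ together with the hyperbolic pairing $B(b,e)=1$. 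Invariance $B([u,v],w)=B(u,[v,w])$ and the compatibility $B(\alpha(u),v)=B(u,\alpha(v))$ are then checked on generators and hold by the very design of the data: the key instances $B([b,x],y)=B_V(\delta(x),y)=B(b,[x,y])$ and $B(\alpha(b),x)=B_V(x_0,x)=B(b,\alpha(x))$ show that the $e$-coefficient $B_V(\delta(x),y)$ of the bracket and the pair $\bigl(x_0,\,B_V(x_0,\cdot)\bigr)$ appearing in $\alpha$ are exactly the data dual to $\delta$ and to the $V$-part of $\alpha(b)$.

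Finally, the two particular cases are direct. Since $\alpha$ is triangular for the flag above with diagonal blocks $\lambda,\alpha_V,\lambda$, one gets $\det\alpha=\lambda^2\det\alpha_V$, so when $\alpha_V$ is invertible $\alpha$ is invertible if and only if $\lambda\neq0$. For the involutive case I would simply compute $\alpha^2$ on each generator: $\alpha^2(e)=\lambda^2e$ forces $\lambda^2=1$; $\alpha^2(x)=x+B_V(\alpha_V(x_0)+\lambda x_0,x)\,e$ (using $\alpha_V^2=\mathrm{id}$ and self-adjointness) forces $\alpha_V(x_0)=-\lambda x_0$; and $\alpha^2(b)=\lambda^2b+(\alpha_V(x_0)+\lambda x_0)+(2\lambda\lambda_0+B_V(x_0,x_0))\,e$ then forces $\lambda_0=-\tfrac{1}{2\lambda}B_V(x_0,x_0)$. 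Together these three requirements are exactly \eqref{DE4}, and conversely they make $\alpha^2=\mathrm{id}$, completing the equivalence.
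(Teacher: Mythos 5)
Your treatment of the two equivalences that the paper actually proves in detail (invertibility and involutivity of $\alpha$) is correct and essentially identical to the paper's: your computations of $\alpha^2$ on $e$, on $V$, and on $b$ reproduce its argument, and your block-triangularity/determinant argument for invertibility is a mild streamlining of the paper's elementwise kernel computation (legitimate in the finite-dimensional setting). The paper dismisses everything else --- skew-symmetry, Hom-Jacobi, multiplicativity, invariance of $B$ --- as ``straightforward calculations'', so the substance of your proposal is exactly that omitted part, and there are two genuine problems in it.

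First, you invoke ``the fact that $\delta$ is skew-adjoint for $B_V$''. This is not among the hypotheses \eqref{DE1}--\eqref{DE3} and cannot be derived from them: take $V$ abelian, $\alpha_V=id_V$, $x_0=0$, $\lambda=1$; then \eqref{DE1}--\eqref{DE3} hold for \emph{every} linear $\delta$, while the bracket $[x,y]=[x,y]_V+B_V(\delta(x),y)e$ is skew-symmetric if and only if $B_V(\delta(x),y)=-B_V(x,\delta(y))$. Skew-symmetry of the new bracket is literally equivalent to skew-adjointness of $\delta$, so this is an indispensable \emph{missing hypothesis} of the theorem (the converse, Theorem \ref{ConvThDE}, derives precisely this property of $\delta$, confirming it is intended data); you must add it as an assumption, not cite it as a fact. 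Second, your claim that the $\K e$-component of the Hom-Jacobi identity for a triple $(x,y,z)$ in $V$ ``collapses to $0$ using invariance of $B_V$ and skew-adjointness of $\delta$'' is false: moving $\delta$ and $\alpha_V$ across $B_V$ and using nondegeneracy, that component is equivalent to the Hom-derivation identity $\alpha_V\delta([y,z]_V)=[\delta\alpha_V(y),z]_V+[y,\delta\alpha_V(z)]_V$, which invariance and adjointness alone never give --- already for $\alpha_V=id$ it says $\delta$ is a derivation. It must be extracted from the structural conditions: substituting \eqref{DE1} into \eqref{DE3} yields $\alpha_V\delta\alpha_V([y,z]_V)=[\delta(y),\alpha_V(z)]_V+[\alpha_V(y),\delta(z)]_V$, and then multiplicativity of $\alpha_V$ together with $\alpha_V^2=id$ (i.e.\ the involutive case) converts this into the identity needed; in the general multiplicative case the gap is real, and it is inherited from the paper's own unproved claim. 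Similar care is needed elsewhere: triples containing two $b$'s vanish identically by skew-symmetry (so, contrary to your outline, nothing ``enters'' there; the scalar component of the $(b,x,y)$ case is closed by \eqref{DE2} plus skew-adjointness of $\delta$), and in the $b\times V$ multiplicativity check the $V$-component is $\alpha_V\delta=\lambda\delta\alpha_V+\rho_V(x_0)\circ\alpha_V$, which follows from \eqref{DE1} only after composing on the right with $\alpha_V$ and using information about $\alpha_V^2$. In short, the spots where you wave at \eqref{DE1}--\eqref{DE3} are exactly the spots where the computation is delicate, and as written your outline does not close there.
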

\begin{proof}
 One shows that $\g$ is a Hom-Lie algebra and $B$  an invariant scalar product  by straightforward calculations.

We assume now that $\alpha$ is invertible. Then $\lambda\neq 0.$ Let $x\in V$ such that $\alpha_V(x)=0.$ It follows that
$\alpha(x)=B_V(x_0,x)e$ which implies   $\alpha(x)=\lambda^{-1}B_V(x_0,x)\alpha(e).$ Then $x=\lambda^{-1}B_V(x_0,x)e$. Thus
$ \lambda^{-1}B_V(x_0,x)=0$, then $B_V(x_0,x)=0.$  Likewise $x=0.$ Hence $\alpha$ is invertible.

Conversely, assume that $\alpha_V$ is invertible and  $\lambda\neq 0$.  Let $x\in\g$ such that $\alpha(x)=0$. The element $x$ in $\g$ may be written $x=r b+y+ s e$ where $r,s\in\K$ and $y\in V.$ Then
\begin{align*}
0=\alpha(x)& =r(\lambda b+x_0+ \lambda_0 e)+(\alpha_V(y)+B_V(x_0,y)e)+ s \lambda e\\
\ & =(\lambda r )b +(\alpha_V(y) +r x_0)+(r \lambda_0+B(x_0,y) + s \lambda)e.
\end{align*}
It follows
\begin{eqnarray*}
\lambda r=0\\
\alpha_V(y) =-r x_0\\
r \lambda_0+B_V(x_0,y)+s h=0.
\end{eqnarray*}
The solution of the system is $r=0,\  \alpha_V(y) =0,\  B_V(x_0,y)e)=- s \lambda.$ Then $r=0,\  y =0,\  s=0,$ which gives $x=0$.
Then we conclude that $\alpha$ is invertible.

Now, assume that  $\alpha$ is an involution. Then $b=\alpha^2(b)$ is equivalent to
\begin{align*}
b &= \lambda \alpha(b) +\alpha(x_0)+\lambda \alpha(e),\\
\ & =\lambda (\lambda b+x_0+\lambda_0 e)+\alpha_V(x_0)+B(x_0,x_0)e+\lambda_0 \lambda e
\end{align*}
By identification, it follows
$$\{ \begin{array}{c}
    1-\lambda^2 =0 \\
    \alpha_V (x_0)=-\lambda x_0 \\
    B(x_0,x_0)=-2\lambda \lambda_0.
  \end{array}\
$$
Therefore, $\lambda=\pm 1$ which are the eigenvalues of the involution, $\alpha_V (x_0)=-\lambda x_0$ and
 $\lambda_0=-\frac{1}{2 \lambda} B_V(x_0,x_0).$

 Let $x\in V$. The identity $\alpha^2(x)=x$ may be written
 \begin{align*}
 \alpha (\alpha_V(x)+B(x_0,x)e)=x\\
  \alpha (\alpha_V(x))+B(x_0,x)\alpha(e)=x\\
  \alpha^2 (x)+B(x_0,\alpha_V(x))e+\lambda B(x_0,x)e=x
 \end{align*}
 Then its equivalent to
 $\alpha_V^2(x)=x$ and $ \alpha_V(x_0)=-\lambda x_0.$
This ends the proof that $\alpha$ is an involution.

Moreover,  we have proved that $\alpha$ is an involution is equivalent to
\begin{eqnarray*}
\alpha_V^2=id_V, \quad \lambda^2=\pm 1,\quad \lambda_0=-\frac{1}{2 \lambda} B_V(x_0,x_0).
\end{eqnarray*}
\end{proof}

\begin{definition}
 The quadratic multiplicative Hom-Lie algebra $(\g, [\ ,\ ],\alpha ,B)$ constructed in Theorem \ref{DoubleExtension0} is called a double extension of $(V,[\ ,\ ]_{V},\alpha_V,B_V)$ ( by the one-dimensional Lie algebra) by means $(\delta,x_0,\lambda, \lambda_0)$.

 It is called involutive double extension when  $\alpha_V$ is an involution and the condition \eqref{DE4} satisfied and its called regular double extension when  $\alpha_V$ is invertible and $\lambda\neq 0.$
 \end{definition}

 We provide in the following the converse of the previous Theorem.

 \begin{theorem}\label{ConvThDE}
Let  $(\g, [\ ,\ ],\alpha ,B)$  be an irreducible quadratic multiplicative Hom-Lie algebra such that $dim\g >1.$

If $\mathcal{Z}(\g )\neq \{0\},$ then $(\g, [\ ,\ ],\alpha ,B)$ is a double extension of a quadratic Hom-Lie algebra $(V,[\ ,\ ]_{V},\alpha_V,B_V)$ (by a one-dimensional Lie algebra) such that $dimV= dim\g -2.$

If in addition $\alpha$ is an involution then the  IQH-Lie algebra $(\g, [\ ,\ ],\alpha ,B)$ is an involutive double extension of an a IQH-Lie algebra $(V,[\ ,\ ]_{V},\alpha_V,B_V)$ and if $\alpha$ is invertible then the RQH-Lie algebra $(\g, [\ ,\ ],\alpha ,B)$ is a regular double extension of  a RQH-Lie algebra $(V,[\ ,\ ]_{V},\alpha_V,B_V).$
 \end{theorem}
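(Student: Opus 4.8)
The plan is to reverse the construction of Theorem \ref{DoubleExtension0}: locate inside $\g$ an isotropic central line fixed by $\alpha$, split off a hyperbolic plane containing it, and read the data $(\delta,x_0,\lambda,\lambda_0)$ off the action of the complementary generator. First I would produce the central line. Since $\g$ is multiplicative and quadratic, the earlier lemma shows $\mathcal{Z}(\g)$ is an ideal, hence $\alpha$-stable; as $\K$ is algebraically closed, $\alpha$ restricted to $\mathcal{Z}(\g)\ne\{0\}$ has an eigenvector $e$, say $\alpha(e)=\lambda e$. Then $\K e$ is a one-dimensional ideal. Because $(\K e)^\bot$ is again an ideal (lemma on orthogonals), $\K e\cap(\K e)^\bot$ is an ideal contained in $\K e$, so it is $\{0\}$ or $\K e$. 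Now $\dim\g>1$ forces $\g$ to be non-abelian (an abelian quadratic algebra of dimension $>1$ splits off a nondegenerate line, contradicting $B$-irreducibility), so a proper nondegenerate ideal is excluded; hence $\K e\cap(\K e)^\bot=\K e$, i.e. $B(e,e)=0$. I would also record that $\mathcal{Z}(\g)=[\g,\g]^\bot$ by invariance and nondegeneracy, so $[\g,\g]\subseteq(\K e)^\bot$.

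Next I would build the plane and the quadratic datum $V$. Choose $b$ with $B(b,e)=1$, then replace $b$ by $b-\tfrac12 B(b,b)e$ to arrange $B(b,b)=0$; the plane $P=\K b\oplus\K e$ is hyperbolic and nondegenerate, so $\g=P\oplus V$ with $V:=P^\bot$ nondegenerate, and $(\K e)^\bot=\K e\oplus V$. Set $B_V:=B_{|V\times V}$. Since $(\K e)^\bot$ is $\alpha$-stable and $e$ is central, for $v,w\in V$ one has $[b,v],[v,w]\in(\K e)^\bot$; a short invariance computation using $B([b,v],b)=-B(b,[b,v])$ even gives $[b,v]\in V$. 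I would then define $\delta(v):=[b,v]\in V$, define $\alpha_V$ and $[\,,\,]_V$ as the $V$-components of $\alpha$ and of the bracket on $V$, and read $x_0\in V$, $\lambda_0\in\K$ off $\alpha(b)=\lambda b+x_0+\lambda_0 e$ (the $b$-coefficient being $\lambda$ since $B(\alpha(b),e)=\lambda$). The remaining components then match Theorem \ref{DoubleExtension0} automatically: one checks that the $e$-coefficient of $[v,w]$ equals $B_V(\delta(v),w)$ and that the $e$-coefficient of $\alpha(v)$ equals $B_V(x_0,v)$.

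It remains to verify that $(V,[\,,\,]_V,\alpha_V,B_V)$ is a quadratic multiplicative Hom-Lie algebra and that $(\delta,x_0,\lambda,\lambda_0)$ satisfies \eqref{DE1}--\eqref{DE3}. Skew-symmetry, symmetry, and nondegeneracy of $B_V$ are immediate; the Hom-Jacobi identity, multiplicativity, invariance of $B_V$, and $B_V$-symmetry of $\alpha_V$ all follow by projecting the corresponding identities of $\g$ onto $V$ along $P$. The three conditions arise the same way: \eqref{DE1} from the $V$-component of $\alpha([b,v])=[\alpha(b),\alpha(v)]$, \eqref{DE3} from the Hom-Jacobi identity evaluated on $(b,v,w)$, and \eqref{DE2} from combining multiplicativity with the Hom-Jacobi identity on the iterated brackets $[b,[b,v]]$. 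This bookkeeping---tracking $b$-, $V$-, and $e$-components through the Hom-Jacobi and multiplicativity relations, and in particular extracting \eqref{DE2} correctly---is the main obstacle; the isotropy established above is precisely what makes every projection well defined.

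Finally I would treat the refinements. By construction $\g$ is exactly the double extension of $(V,[\,,\,]_V,\alpha_V,B_V)$ by $(\delta,x_0,\lambda,\lambda_0)$, and $\dim V=\dim\g-2$. If $\alpha$ is an involution, then $\lambda^2=1$ from $\alpha^2(e)=e$, $\alpha_V^2=\mathrm{id}_V$ by projecting $\alpha^2=\mathrm{id}$, and identifying $\alpha^2(b)=b$---carried out exactly as in the proof of Theorem \ref{DoubleExtension0}---yields \eqref{DE4}, so the extension is involutive. If $\alpha$ is invertible, then $\lambda\ne0$ since $e$ is an eigenvector, whence $\alpha$ restricts to an isomorphism of $(\K e)^\bot$ and $\alpha_V$ is invertible, giving a regular double extension.
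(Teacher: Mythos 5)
Your overall strategy is exactly the paper's: extract a central eigenvector $e$ of $\alpha$ (using that $\mathcal{Z}(\g)$ is an $\alpha$-stable ideal and $\K$ is algebraically closed), force $B(e,e)=0$ from $B$-irreducibility, split off the hyperbolic plane $\K b\oplus\K e$, set $V=(\K b\oplus\K e)^\bot$, read off $(\delta,x_0,\lambda,\lambda_0)$, and verify the hypotheses of Theorem \ref{DoubleExtension0} by projecting the identities of $\g$ onto the $b$-, $V$- and $e$-components. Up to and including the identities $[b,v]=\delta(v)\in V$, $\varphi(v,w)=B_V(\delta(v),w)$, $\alpha(v)=\alpha_V(v)+B_V(x_0,v)e$ and $\alpha(b)=\lambda b+x_0+\lambda_0 e$, your steps coincide with the paper's and are correct, as is your treatment of the involutive and regular refinements.

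However, the bookkeeping that you yourself flag as the main obstacle is misassigned at two of the three places, and in a way that would make the argument fail in the general (non-involutive) case. First, the $V$-component of $\alpha([b,v])=[\alpha(b),\alpha(v)]$ does not give \eqref{DE1}: since $e$ is central, that component reads $\alpha_V\circ\delta=(\lambda\delta+\rho_V(x_0))\circ\alpha_V$, whereas \eqref{DE1} is $\alpha_V\circ\delta\circ\alpha_V=\lambda\delta+\rho_V(x_0)$; the two coincide only if $(\lambda\delta+\rho_V(x_0))\circ(\alpha_V^2-\mathrm{id})=0$, e.g.\ when $\alpha_V^2=\mathrm{id}$, and in the first part of the theorem $\alpha$ is neither invertible nor involutive. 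The paper instead obtains \eqref{DE1} from the $e$-component of multiplicativity on pairs $v,w\in V$, namely $B_V(x_0,[v,w]_V)+\lambda B_V(\delta(v),w)=B_V(\delta(\alpha_V(v)),\alpha_V(w))$, converted via invariance of $B_V$ and the $B_V$-symmetry of $\alpha_V$. Second, \eqref{DE2} cannot come from the Hom-Jacobi identity on the triple $(b,b,v)$ (your ``iterated brackets $[b,[b,v]]$''): by skew-symmetry that instance is identically zero, a fact the paper records explicitly. In the paper, \eqref{DE2} is extracted from the $e$-component of the Hom-Jacobi identity on $(b,v,w)$ with $v,w\in V$, after first proving that $\delta$ is $B_V$-skew-symmetric (from $B([b,v],w)=-B(v,[b,w])$); it is this skew-symmetry that produces the $\delta^2$ terms. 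The $V$-component of the same instance gives \eqref{DE3}, as you say. These are not cosmetic slips: establishing \eqref{DE1}--\eqref{DE3} is precisely what makes $\g$ a double extension, so the plan as written does not verify the hypotheses of Theorem \ref{DoubleExtension0}, although the correct projections are available within the framework you set up.
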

 \begin{proof}
Let $(\g, [\ ,\ ],\alpha ,B)$ be an irreducible quadratic Hom-Lie algebra such that $dim\g >1$. Assume that $\mathcal{Z}(\g) \neq \{0\}$ and $\alpha (\mathcal{Z}(\g))\subseteq \mathcal{Z}(\g).$ Then there exists $\lambda\in\K$ ($\K$ algebraically closed) and $e\in\mathcal{Z}(\g)\setminus \{0\}$ such that $\alpha(e)=\lambda e$. We have  $B(e,e)=0$ because $\K e$ is an ideal of $\g$, $\g$ is irreducible and $dim\g > 1$. Since $B$ is nondegenerate then there exists $b\in\g$ such that $B(e,b)=1$ and $B(b,b)=0.$

The vector space $A=\K e\oplus \K b$ is nondegenerate (i.e. $B_{|A\times A}$ is nondegenerate ), then $\g =\A \oplus \A^{\perp}.$  Set  $V=\A^{\perp}$, then $B_{|V\times V}$ is nondegenerate and $(\K e)^{\perp}=\K e \oplus V.$

Then $\g =\K e \oplus V\oplus  \K b$ with $B(e,e)=B(b,b)=0$, $B(e,b)=1$, $B(V,e)=B(V,b)=\{0\}$ and  $B_{|V\times V}$ is nondegenerate.

There exist  a linear map  $\alpha_V: V\rightarrow V$  and a linear form $f: V\rightarrow \K $ such that $\alpha (x)=f(x)e+\alpha_V(x)$, $\forall x\in V$, because $(\K e)^{\perp}$ is an ideal of $\g$ implies $\alpha((\K e)^{\perp})\subseteq (\K e)^{\perp}.$

There exist $\lambda_0 \in \K,$ $\gamma_0\in\K$, $x_0\in W$ such that $\alpha (b)=\lambda_0 e+x_0+\gamma_0 b.$

The fact that $(\K e)^{\perp}$ is ideal implies that
\begin{enumerate}
\item there exist  a bilinear map $[\ ,\ ]_V:V\times V\rightarrow V$ and a bilinear form $\varphi:V\times V\rightarrow \K$ such that
\begin{equation}\label{M1}
[x,y]=\varphi(x,y)e+[x,y]_V, \ \ \forall x,y\in V.
\end{equation}
\item there exist  a linear map $\delta:V\rightarrow V$ and a linear form $h:V\rightarrow \K$ such that
\begin{equation}\label{M2}
[b,y]=\delta(x)+h(x)e, \ \ \forall x\in V.
\end{equation}
\end{enumerate}
Let $x,y,z\in V$, we have $[\alpha(x),[y,z]]=[\alpha(x),[y,z]_V]$ because $e\in \mathcal{Z}(\g)$. Then
$$
[\alpha(x),[y,z]]=[\alpha_V(x),[y,z]_V]=[\alpha_V(x),[y,z]_V]_V+\varphi(\alpha_V(x),[y,z]_V)e.$$
Therefore the Hom-Jacobi identity $ \circlearrowleft_{x,y,z}{[\alpha(x),[y,z]]}=0 $ implies that
\begin{equation}\label{I}
\circlearrowleft_{x,y,z}{[\alpha_V(x),[y,z]_V]_V}=0,
\end{equation}
and
\begin{equation}\label{II}
\circlearrowleft_{x,y,z}{\varphi(\alpha_V(x),[y,z]_V)}=0.
\end{equation}
Also the skewsymmetry $[\ ,\ ]$ is equivalent for $x,y\in V$  to
\begin{eqnarray}
\label{etoile}[x,y]_V=-[y,x]_V,\\
\label{etoilePrime}\varphi (x,y)=-\varphi (y,x).
\end{eqnarray}
Thus \eqref{I} and \eqref{etoile} show that $(V,[\ ,\ ]_V,\alpha_V)$ is a Hom-Lie algebra.


Let $x,y\in\mathcal{Z}$
\begin{align*}
0& =[\alpha(b),[x,y]]+[\alpha(x),[y,b]]+[\alpha(y),[b,x]]\\
\ & =[x_0,[x,y]]+\lambda [b,[x,y]]+[\alpha_V(x),[y,b]]+[\alpha_V(y),[b,x]]\\
\ & =[x_0,[x,y]_V]+\lambda [b,[x,y]_V]-[\alpha_V(x),\delta(y)]+[\alpha_V(y),\delta(x)]\\
\ & =[x_0,[x,y]_V]_V+\varphi(x_0,[x,y]_V)e+\lambda \delta([x,y]_V)+\lambda  h([x,y]_V)e\\
\ \ & +[\alpha_V(x),\delta(y)]_V +\varphi(\alpha_V(x),\delta(y))e+[\alpha_V(y),\delta(x)]_V
+\varphi(\alpha_V(y),\delta(x))e.\\
\end{align*}
Therefore
\begin{eqnarray}
\label{III}
\varphi(x_0,[x,y]_V)+\lambda h([x,y]_V)\
  -\varphi(\alpha_V(x),\delta(y))
+\varphi(\alpha_V(y),\delta(x))=0,\\ \label{IV}
[x_0,[x,y]_V]_V+\lambda \delta([x,y]_V)-[\alpha_V(x),\delta(y)]_V+[\alpha_V(y),\delta(x)]_V=0.
\end{eqnarray}
Let $x,y\in V$
$$\varphi(x,y)=B([x,y],b)=B(b,[x,y])=B([b,x],y)=B(\delta(x),y),$$
because $B(e,y)=0.$ Then $[x,y]=[x,y]_V+B(\delta(x),y)e,$ that is \eqref{M1}.

Let $x\in V$,
$$B([b,x],b)=-B(x,[b,b])=0.$$
In the other hand $B([b,x],b)=h(x)$ because $B(V,b)=\{0\}$ and $B(e,b)=1.$ Then $h(x)=0.$ We conclude that $[B,x]=\delta(x)\in V$ i.e. \eqref{M2}.

Let $y\in V$, $B(\alpha(b),y)=B(b,\alpha(y)),$ which is equivalent to $B(x_0,y)=B(b,\alpha_V(y)+f(y)e)=f(y).$ Then
$f(y)=B(x_0,y).$ It follows $\alpha(y)=\alpha_V(y)+B(x_0,y)e.$

Also $B(\alpha(b),e)=B(b,\alpha(e))$ is equivalent to $\gamma_0=B(b,\lambda e)=\lambda.$ Then
$$\alpha(b)=\lambda_0e+x_0+\lambda b=\lambda b+x_0+\lambda_0 e.$$

Now \eqref{III} is equivalent to
\begin{equation}\label{IIIPrime}
B(\delta(x_0),[x,y]_V)-B(\delta(\alpha_V(x)),\delta(y))+B(\delta(\alpha_V(y)),\delta(x))=0,
\end{equation}
because $h=0$ and $\varphi(x,y)=B(\delta(x),y)$, $\forall x,y\in V.$

Let $x,y,z\in V$
\begin{eqnarray*}
B([x,y],z)=B([x,y]_V,z)=B_V([x,y]_V,z),\\
B(x,[y,z])=B(x,[y,z]_V)=B_V(x,[y,z]_V).
\end{eqnarray*}
Then $B_V$ is invariant.

In addition $B(\alpha(x),y)=B(x,\alpha(y))$ implies $B_V(\alpha_V(x),y)=B_V(x,\alpha_V(y)),$ that is $\alpha_V$ is $B_V$-symmetric. It is obvious  that $B_V$ is symmetric.

We have
\begin{align*}
\alpha([x,y])&=\alpha([x,y]_V+B(\delta(x),y)e)\\
\ &=\alpha_V([x,y]_V)+B(x_0,[x,y]_V)e+\lambda B(\delta(x),y)e\\
\ &=\alpha_V([x,y]_V)+B_V(x_0,[x,y]_V)e+ B(\lambda\delta(x),y)e.
\end{align*}
In the other hand
$$[\alpha(x),\alpha(y)]=[\alpha_V(x),\alpha_V(y)]_V+B(\delta(\alpha_V(x)),\alpha_V(y))e.$$
Then
\begin{equation}\label{At1}
\alpha_V([x,y]_V)=[\alpha_V(x),\alpha_V(y)].
\end{equation}
Also
$$B_V(x_0,[x,y]_V)+B(\lambda \delta(x),y)=B(\delta(\alpha_V(x),\alpha_V(y))$$
which is equivalent to
$$B_V([x_0,x]_V,y)+B(\lambda \delta(x),y)=B(\alpha_V\circ\delta\circ\alpha_V(x),y),$$
and to
\begin{equation}\label{BB1}
B_V([x_0,x]_V+\lambda \delta(x)-\alpha_V\circ\delta\circ\alpha_V(x),y)=0.
\end{equation}
Since $B$ is nondegenerate, then \eqref{BB1} leads to
$$[x_0,x]_V+\lambda \delta(x)-\alpha_V\circ\delta\circ\alpha_V(x)=0.$$
By setting $\rho_V(x_0)=[x_0,\cdot]_V$, the previous identity may be written
\begin{equation}\label{CDC1}
\alpha_V\circ\delta\circ\alpha_V-\lambda \delta(x)=\rho_V(x_0).
\end{equation}

Then we have proved that $(V,[\ ,\ ]_{V},\alpha_V,B_V)$ is quadratic Hom-Lie algebra. Also there exists $\delta\in End(V)$ such that the condition \ref{CDC1} is satisfied. Hence
\begin{align*}
 [x,y]&=[x,y]_V+B_V(\delta(x),y)\quad \forall x,y\in V\\
 [b,x]&=\delta (x) \quad \forall x\in V\\
 \end{align*}
 and the linear map $\alpha:\g \rightarrow \g$ defined by
  \begin{align*}
\alpha (x)&=\alpha_V(x)+ B_V(x_0,x) e\quad \forall x\in V \\
\alpha (e)&=\lambda e  \\
\alpha (b)&=\lambda b +x_0+\lambda_0 e.
 \end{align*}

Let $x,y\in V$, we have $B([b,x],y)=-B(x,[b,y])$ which is equivalent to $B(\delta(x),y)=-B(x,\delta(y)),$ then to $B_V(\delta(x),y)=-B(x,\delta(y)).$ It means that $\delta$ is skewsymmetric with respect to $B$.

The condition \eqref{IIIPrime} is equivalent to $B_V(\delta(x_0),[x,y]_V)-B-V(\delta\circ\alpha_V(x),\delta(y))+B_V(\delta\circ\alpha_V(y),\delta(x))=0$ then $B_V([\delta(x_0),x],y)+B(\delta^2\circ\alpha_V(x),y)-B_V(y,\alpha_V\circ\delta^2(x))=0$. It is equivalent to $[\delta(x_0),x]+(\delta^2\circ\alpha_V-\alpha_V\circ\delta^2)(x)=0$, thus to
\begin{equation}\label{CDC2}
\delta^2\circ\alpha_V-\alpha_V\circ\delta^2=-[\delta(x_0),\cdot].
\end{equation}
Recall that we have denoted $[\delta(x_0),\cdot]$ by $\rho(\delta(x_0)).$

Let $x,y\in V$. We have
\begin{align*}
\circlearrowleft_{b,x,y}{[\alpha(b),[x,y]]}&=[\lambda b+x_0,[x,y]_V]-[\alpha_V(x),\delta(y)]+[\alpha_V(y),\delta(x)]\\
\ &=\lambda \delta([x,y]_V)+[x_0,[x,y]_V]_V+B(\delta(x_0),[x,y]_V)e-[\alpha_V(x),\delta(y)]_V\\
\ & -B(\delta(\alpha_V(x)),\delta(y))e
+[\alpha_V(y),\delta(x)]_V+B(\delta(\alpha_V(y)),\delta(x))e=0
\end{align*}
which leads to
\begin{equation*}\lambda \delta([x,y]_V)+[x_0,[x,y]_V]-[\alpha_V(x),\delta(y)]_V+[\alpha_V(y),\delta(x)]_V=0\end{equation*}
and
\begin{equation*}B([\delta(x_0),x]+\delta^2\circ\alpha_V(x)-\alpha_V\circ\delta^2(x),y)=0.\end{equation*}
These two   identities are equivalent to
\begin{equation*}\lambda \delta([x,y]_V)=[\delta(x),\alpha_V(y)]_V+[\alpha_V(x),\delta(y)]_V-[x_0,[x,y]_V]\end{equation*}
which is equivalent to \eqref{IIIPrime} and
\begin{equation*}
\delta^2\circ \alpha_V-\alpha_V\circ \delta^2=[\delta(x_0),\cdot ],
\end{equation*}
which is equivalent to  \eqref{CDC2}.
The identity \eqref{eq3} may be written
\begin{equation}\label{CDC3}
(\lambda \delta +[x_0,\cdot ]_V)([x,y]_V)=[\delta(x),\alpha_V(y]_V+[\alpha_V(x),\delta(y)]_V.
\end{equation}
$\circlearrowleft_{b,b,x}{[\alpha(b),[b,x]]}=0
$ is satisfied.

We have proved that $(V,[\ ,\ ]_{V},\alpha_V,B_V)$ is quadratic multiplicative Hom-Lie algebra. Also it is clear  that using identities \eqref{CDC1},\eqref{CDC2},\eqref{CDC3} the Hom-Lie algebra $\g$ is a double extension of this Hom-Lie algebra by means of $(\delta, x_0,\lambda, \lambda_0)$.
%
  \end{proof}
 Theorem \ref{ConvThDE} and the Lemma \ref{CentreNul} lead to
 \begin{proposition}\label{prop7.4}
 Let $(\g, [\ ,\ ],\alpha ,B)$ be a RQH-Lie algebra. Then it is obtained from a centerless  IQH-Lie algebra by a finite sequence of regular  double extensions by the one dimensional Lie algebra or/and orthogonal direct sum of RQH-Lie algebras.
\end{proposition}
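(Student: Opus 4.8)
The plan is to argue by induction on $\dim\g$, at each stage stripping off a single regular double extension and using the decomposition of a quadratic Hom-Lie algebra into an orthogonal direct sum of irreducible ideals to reduce to the irreducible case. The two engines of the argument are exactly Proposition~\ref{CentreNul} (which converts the centerless situation into an involutive one) and the converse double extension Theorem~\ref{ConvThDE} (which supplies the inductive step).

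First I would fix the anchor of the induction. If $\mathcal{Z}(\g)=\{0\}$, then since $\alpha$ is an automorphism (the algebra being regular) Proposition~\ref{CentreNul} forces $\alpha^{2}=id$; hence $(\g,[\ ,\ ],\alpha,B)$ is already a centerless IQH-Lie algebra and the conclusion holds with no extension performed. This is the only way the recursion can bottom out on the ``simple'' side.

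Assume now $\mathcal{Z}(\g)\neq\{0\}$ and write $\g=\g_{1}\oplus\cdots\oplus\g_{n}$ as an orthogonal direct sum of irreducible ideals. Each $\g_{i}$ is $\alpha$-invariant (being an ideal), and since $\alpha$ is bijective on the finite-dimensional space $\g$ its restriction $\alpha_{|\g_i}$ is again an automorphism; thus every factor is an irreducible RQH-Lie algebra, and it suffices to put each factor in the required form and recombine the factors by an orthogonal direct sum of RQH-Lie algebras. Fixing an irreducible factor $\g_{i}$: if $\mathcal{Z}(\g_{i})=\{0\}$ it is a centerless IQH-Lie algebra by the base case; otherwise $\mathcal{Z}(\g_{i})\neq\{0\}$ and $\dim\g_{i}>1$, so Theorem~\ref{ConvThDE} applies and realizes $\g_{i}$ as a \emph{regular} double extension of an RQH-Lie algebra $V_{i}$ with $\dim V_{i}=\dim\g_{i}-2$. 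As $\dim V_{i}<\dim\g$, the induction hypothesis expresses $V_{i}$ through centerless IQH-Lie algebras by double extensions and orthogonal sums, and performing the final regular double extension does the same for $\g_{i}$.

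The step I expect to be the main obstacle is the bookkeeping of the recursion rather than any single identity: after a double extension the contracted algebra $V_{i}$ need not be irreducible, so the argument must feed $V_{i}$ back through the decomposition, and one must check that the regularity clause of Theorem~\ref{ConvThDE} really keeps $\alpha_{V_i}$ invertible, so that each $V_{i}$ remains an RQH-Lie algebra to which the induction can be applied. Termination is clear, since the dimension drops by $2$ at every double extension and each proper irreducible summand is strictly smaller than $\g$. The one delicate point is the identification of the indecomposable building blocks: one should confirm that, apart from the one-dimensional abelian summands, the atoms produced by the process are precisely the centerless IQH-Lie algebras, which is exactly what Proposition~\ref{CentreNul} secures once the center vanishes.
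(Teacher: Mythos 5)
Your proposal is correct and follows essentially the same route as the paper: the paper's entire ``proof'' is the one-line remark that Theorem~\ref{ConvThDE} together with Proposition~\ref{CentreNul} lead to the result, and your induction on dimension (orthogonal decomposition into irreducible ideals, Proposition~\ref{CentreNul} to conclude $\alpha^2=id$ when the center vanishes, Theorem~\ref{ConvThDE} to strip off a regular double extension when it does not) is precisely the argument the paper leaves implicit. The only caveats --- that a one-dimensional irreducible factor is abelian, hence has nonzero center but escapes Theorem~\ref{ConvThDE} (which needs $\dim\g>1$), and that both Proposition~\ref{CentreNul} and Theorem~\ref{ConvThDE} assume multiplicativity, which the definition of RQH-Lie algebra does not formally include --- are glossed over by the paper itself, so they are defects of the statement rather than of your argument.
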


%

\begin{corollary}
 Let $(\g, [\ ,\ ],\alpha ,B)$ be an IQH-Lie algebra.
 Then it is obtained from  centerless IQH-Lie algebras  by a finite sequence of involutive  double extensions by the one dimensional Lie algebra or/and orthogonal direct sum of IQH-Lie algebras.
\end{corollary}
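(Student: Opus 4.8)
The plan is to mirror the argument behind Proposition \ref{prop7.4}, carrying out an induction on $\dim\g$ but staying entirely inside the involutive class, since by condition \eqref{DE4} an involutive double extension of an IQH-Lie algebra is again an IQH-Lie algebra. First I would dispose of the reducible case: applying the orthogonal decomposition of a quadratic Hom-Lie algebra into irreducible ideals, write $\g=\g_1\oplus\cdots\oplus\g_n$ with $B(\g_i,\g_j)=\{0\}$ for $i\neq j$ and each $\g_i$ a nondegenerate irreducible ideal. Because $\alpha^2=id_\g$, the restriction $\alpha_{|\g_i}$ is again an involution, and $B_{|\g_i\times\g_i}$ is nondegenerate; hence each $(\g_i,[\ ,\ ]_{|\g_i\times\g_i},\alpha_{|\g_i},B_{|\g_i\times\g_i})$ is itself an irreducible IQH-Lie algebra. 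Thus it suffices to treat the irreducible case and then reassemble by orthogonal direct sum.

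For the irreducible induction step, let $\g$ be an irreducible IQH-Lie algebra. If $\mathcal{Z}(\g)=\{0\}$ then $\g$ is already a centerless IQH-Lie algebra and serves as a building block, which together with the one-dimensional abelian algebras covers the base of the induction (the case $\dim\g\le 1$). If $\mathcal{Z}(\g)\neq\{0\}$, I would first observe that $\alpha(\mathcal{Z}(\g))=\mathcal{Z}(\g)$: the center of a multiplicative Hom-Lie algebra is an ideal, so $\alpha(\mathcal{Z}(\g))\subseteq\mathcal{Z}(\g)$, and since $\alpha$ is bijective (being an involution) equality holds. This $\alpha$-invariance is exactly what is needed to invoke the involutive part of Theorem \ref{ConvThDE}, which produces an IQH-Lie algebra $(V,[\ ,\ ]_V,\alpha_V,B_V)$ with $\dim V=\dim\g-2$ such that $\g$ is an involutive double extension of $V$ by means of some $(\delta,x_0,\lambda,\lambda_0)$ satisfying \eqref{DE4}.

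I would then apply the induction hypothesis to $V$: it is obtained from centerless IQH-Lie algebras by a finite sequence of involutive double extensions by the one-dimensional Lie algebra and orthogonal direct sums. Appending the one further involutive double extension that recovers $\g$ from $V$ shows that $\g$ admits the same description, completing the induction on the irreducible case. Reassembling the orthogonal summands $\g_1,\dots,\g_n$ then yields the statement for an arbitrary IQH-Lie algebra $\g$.

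The hard part is not any single computation---those are supplied by Theorem \ref{DoubleExtension0}, Theorem \ref{ConvThDE}, and the orthogonal decomposition---but rather the bookkeeping that keeps the construction inside the involutive class at every stage: one must check that restriction to an irreducible ideal preserves both the involution and the nondegeneracy of $B$, that the center is $\alpha$-invariant so that Theorem \ref{ConvThDE} applies in its involutive form, and that the algebra $V$ it outputs is genuinely an IQH-Lie algebra so the induction can recurse. A minor subtlety to address in the base case is the one-dimensional algebra $\K e$ with $B(e,e)\neq0$, which is irreducible with $\mathcal{Z}=\K e\neq\{0\}$ yet too small to be a double extension; this must be listed among the admissible building blocks alongside the centerless IQH-Lie algebras.
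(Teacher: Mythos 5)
Your proposal is correct and takes essentially the same route as the paper: the corollary is obtained there precisely by iterating the involutive case of Theorem \ref{ConvThDE} together with the orthogonal decomposition into irreducible ideals (this is what the paper means by deriving it from Proposition \ref{prop7.4}), which is exactly your induction on dimension staying inside the involutive class. Your remark that the one-dimensional algebra $\K e$ with $B(e,e)\neq 0$ must be admitted as an extra building block is a legitimate sharpening of the statement's wording, and it agrees with the paper's own later inclusion of the one-dimensional Hom-Lie algebra in the set $\mathcal{U}$.
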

\begin{remark}
The Proposition \ref{prop7.4}  reduces the study of IQH-Lie algebras to centerless IQH-Lie algebras.

Hence, the problem of Quadratic Hom-Lie algebras reduces to study two classes
\begin{enumerate}
\item Quadratic Hom-Lie algebras $(\g, [\ ,\ ],\alpha ,B)$  with $\alpha$ nilpotent,
\item centerless IQH-Lie algebras.
\end{enumerate}
\end{remark}

We study in the following the double extension of IQH-algebra  by an involutive Hom-Lie algebra of any dimension.
\begin{theorem}[Involutive double extension Theorem]
 Let $(V,[\ ,\ ]_{V},\alpha_V,B_V)$ be an IQH-Lie algebra and   $(A,[\ ,\ ]_A,\alpha_A)$ be an involutive Hom-Lie algebra.

 Let $\phi :\begin{array}{c}
              A\rightarrow End(V) \\
              a\rightarrow \phi (a)
            \end{array}
 $ be a representation of $A$ on $ (V,\alpha_A)$ such that

 \begin{eqnarray}\label{TDE1}
 && \alpha_V \circ \phi(a)([x,y]_V)=[\phi (a)\circ\alpha_V(x),y]_V+[x,\phi(a) \circ\alpha_V (y)]_V \  \  \forall x,y\in V,\\\label{TDE2}
 && \phi (a)\circ\alpha_A(a)=\alpha_V\circ\phi (a)\circ\alpha_V\  \ \cdot\forall a\in A ,\\\label{TDE3}
&& B_V(\phi (a)(x),y)=-B_V(x,\phi (a)(y) \  \  \forall x,y\in V \ \  \  \forall a\in A.
 \end{eqnarray}

 Let $\psi :\begin{array}{cc}
              V\times V& \rightarrow A^\ast \\
              (x,y)& \rightarrow \psi (x,y)
            \end{array}
 $ defined by $\forall x,y\in V \    \forall a\in A$ by $\psi(x,y)(a)=B_V(\phi (a)(x),y).$

 Then the vector space $\g:=A\oplus V \oplus A^\ast $   endowed with  the bracket $[\ ,\ ]:\g \times \g \rightarrow\g$ defined for $f,f'\in A^\ast$, $a,a'in A$ and $v,v'in V$ by
 \begin{equation*}
 [f+v+a,f'+v'+a']=\widetilde{\ad}_A(a)(f')-\widetilde{\ad}_A(a')(f)+\psi (v,v')+[v,v']_V+[a,a']_A
 \end{equation*}
 and the linear map $\alpha:\g \rightarrow \g$ defined by
  \begin{equation*}
\alpha (f+v+a)=^t \alpha_A(a)+\alpha_V (f)+\alpha_A (a)
 \end{equation*}
is an involutive  Hom-Lie algebra.

 In addition, the bilinear form $B:\g \times \g \rightarrow \K$ defined by
 \begin{equation*}
 B_\gamma(f+v+a,f'+v'+a')=B_V(a,a')+f(w')+f'(w)+\gamma (a,a')
 \end{equation*}
 where $\gamma$ is an invariant symmetric bilinear form on $(A,[\ ,\ ],\alpha_A )$ such that $\gamma(\alpha(a),a')=\gamma(a,\alpha(a'))$,  $\forall a,a'\in A$, is a quadratic structure on $\g, [\ ,\ ],\alpha).$

 The IQH-Lie algebra  $\g, [\ ,\ ],\alpha,B_\gamma)$ is called the involutive double extension of $(V,[\ ,\ ]_{V},\alpha_V,B_V)$ by $(A,[\ ,\ ]_A,\alpha_A)$ by means of $(\phi,\gamma).$
 \end{theorem}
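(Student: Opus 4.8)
The algebra $\g=A\oplus V\oplus A^\ast$ is a \emph{double extension} of the IQH-Lie algebra $V$ by the involutive Hom-Lie algebra $A$, in the spirit of Medina--Revoy, built on a $T^\ast$-type extension by $A^\ast$; the plan is to verify the Hom-Lie axioms and the four defining properties of a quadratic structure by projecting each identity onto the three summands. First I would note that $\alpha$ respects the decomposition, acting as $\alpha_A$ on $A$, as $\alpha_V$ on $V$, and as ${}^t\alpha_A$ on $A^\ast$. Involutivity is then immediate, since $\alpha^2=\alpha_A^2\oplus\alpha_V^2\oplus({}^t\alpha_A)^2$ and $\alpha_A^2=\mathrm{id}_A$, $\alpha_V^2=\mathrm{id}_V$, $({}^t\alpha_A)^2={}^t(\alpha_A^2)=\mathrm{id}$. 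Skew-symmetry is checked componentwise: the coadjoint part $\widetilde{\ad}_A(a)(f')-\widetilde{\ad}_A(a')(f)$ is antisymmetric by construction, the brackets $[\,\cdot,\cdot\,]_A$ and $[\,\cdot,\cdot\,]_V$ are skew, and for $\psi$ one computes $\psi(x,y)(a)=B_V(\phi(a)x,y)=-B_V(x,\phi(a)y)=-B_V(\phi(a)y,x)=-\psi(y,x)(a)$ using \eqref{TDE3} and the symmetry of $B_V$.

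The properties of $B_\gamma$ apart from invariance are formal. Symmetry follows from the symmetry of $B_V$ and $\gamma$ and that of the dual pairing $f(a')+f'(a)$. Nondegeneracy follows from a block argument: the perfect pairing of $A$ with $A^\ast$ together with the nondegeneracy of $B_V$ forces any vector orthogonal to all of $\g$ to vanish. The compatibility $B_\gamma(\alpha(x),y)=B_\gamma(x,\alpha(y))$ reduces to three matched pieces: the $\alpha_V$-symmetry of $B_V$, the hypothesis $\gamma(\alpha_A(a),a')=\gamma(a,\alpha_A(a'))$, and the transpose relation $({}^t\alpha_A f)(a')=f(\alpha_A a')$ that makes the cross terms agree.

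The two substantial steps are the Hom-Jacobi identity and the invariance of $B_\gamma$. For Hom-Jacobi I would expand $\circlearrowleft_{X,Y,Z}[\alpha(X),[Y,Z]]$ and read off its $A$-, $V$- and $A^\ast$-components. The $A$-component is exactly the Hom-Jacobi identity of $(A,[\,\cdot,\cdot\,]_A,\alpha_A)$; the $V$-component reduces to the Hom-Jacobi identity of $V$ together with the representation property of $\phi$, as in the semidirect construction of Proposition \ref{prop2}, with \eqref{TDE2} used to commute $\alpha_A$ through $\phi$; and the $A^\ast$-component couples the coadjoint action of $A$ on $A^\ast$ (a representation, using that $A$ is involutive) with the cocycle $\psi$, its vanishing being forced precisely by the twisted-derivation identity \eqref{TDE1} and the skew-symmetry \eqref{TDE3}. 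For invariance I would split $B_\gamma([X,Y],Z)-B_\gamma(X,[Y,Z])$ similarly: the invariance of $B_V$ handles the $V$-terms, the invariance of $\gamma$ the $A$-terms, and the defining property of the coadjoint action gives the $T^\ast$-invariance of the $A$--$A^\ast$ pairing. The remaining $\psi$-terms are matched against the $\phi$-action terms through the defining identity $\psi(v,v')(a)=B_V(\phi(a)v,v')$ and \eqref{TDE3}.

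The main obstacle will be the $A^\ast$-component of the Hom-Jacobi identity together with the corresponding cross terms in the invariance computation: these are the only places where the three structural hypotheses \eqref{TDE1}, \eqref{TDE2} and \eqref{TDE3} must be used in concert, and where the coadjoint action and the cocycle $\psi$ must be tracked simultaneously. All the other assertions are formal and follow directly from the corresponding properties of $(V,\alpha_V,B_V)$, $(A,\alpha_A)$ and $\gamma$.
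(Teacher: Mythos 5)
Your outline is correct and is essentially the proof the paper intends: the paper's entire proof of this theorem is the single sentence that it is ``similar to the proof in \cite{BenamorBenayadi}'', namely the componentwise verification of skew-symmetry, the Hom--Jacobi identity, and the quadratic-structure axioms on the decomposition $A\oplus V\oplus A^{\ast}$ that you describe, with the three hypotheses on $\phi$ entering exactly where you place them. The only point worth making explicit is that you have (correctly) read the bracket as containing the action terms $\phi(a)(v')-\phi(a')(v)$, which are missing from the printed formula in the statement but without which the mixed $A$--$V$ component of the Hom--Jacobi identity would fail; with that reading, your componentwise argument and the cited one coincide.
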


%
\begin{proof}
The proof is similar to the proof in \cite{BenamorBenayadi}.
\end{proof}


We pursue the characterization of IQH-Lie algebras.
 \begin{theorem}
 Let  $( \g, [\ ,\ ],\alpha,B)$ be an irreducible   IQH-Lie algebra such that $\mathcal{Z}(\g)=\{0\}$. Then  it is either an involutive simple Hom-Lie algebra or it is obtained by an involutive double extension of an IQH-Lie algebra $(V,[\ ,\ ]_{V},\alpha_V,B_V)$ by an involutive simple Hom-Lie algebra.
 \end{theorem}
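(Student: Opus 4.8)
The plan is to untwist the structure, reduce everything to the associated quadratic Lie algebra, and then run a Medina--Revoy type argument carried out equivariantly with respect to the involution. Write $\theta:=\alpha$ and pass to the associated Lie algebra $\g_\theta=(\g,[\ ,\ ]_\theta)$ with $[x,y]_\theta=[\theta(x),\theta(y)]$ together with the form $T(x,y)=B(\theta(x),y)$. By the biunivoque correspondence established earlier, $(\g_\theta,[\ ,\ ]_\theta,T)$ is a quadratic Lie algebra on which $\theta$ is a symmetric involutive automorphism. Since $\theta$ is bijective one checks $\mathcal{Z}(\g_\theta)=\mathcal{Z}(\g)=\{0\}$, and since the ideals of the Hom-Lie algebra $\g$ are exactly the $\theta$-invariant ideals of $\g_\theta$ (the requirement $\alpha(I)\subseteq I$ in the definition of a Hom-Lie ideal being precisely $\theta$-invariance), the $B$-irreducibility of $\g$ says that $\g_\theta$ has no proper nonzero nondegenerate $\theta$-invariant ideal. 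In this language the statement is a structure theorem for centerless, $\theta$-irreducible quadratic Lie algebras.

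First I would dispose of the simple alternative. If $\g$ has no proper nonzero ideal then, as $\mathcal{Z}(\g)=\{0\}$ forces $\g$ to be non-abelian, $\g$ is a simple involutive Hom-Lie algebra and we are in the first case (its internal anatomy then being given by Theorem \ref{SimpleInvolutiveHomLie}). So assume $\g$ is not simple and choose a minimal nonzero (Hom-Lie) ideal $I$, equivalently a minimal $\theta$-invariant ideal of $\g_\theta$. Since $I^\perp$ is again a $\theta$-invariant ideal, $I\cap I^\perp$ is a $\theta$-invariant ideal contained in $I$; were $I$ nondegenerate we would obtain $\g=I\oplus I^\perp$, contradicting irreducibility, so $I\cap I^\perp\neq\{0\}$ and minimality yields $I\subseteq I^\perp$, i.e. $I$ is totally isotropic. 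I would then show $I$ is abelian: since the solvable radical $\mathfrak{R}(\g)$ is $\theta$-invariant (cf. the discussion preceding Corollary \ref{cor6.5}), minimality forces either $I\subseteq\mathfrak{R}(\g)$ or $I\cap\mathfrak{R}(\g)=\{0\}$. In the latter case $I$ would inject into $\g_\theta/\mathfrak{R}(\g)$ and hence contain a simple ideal $S$, which would be totally isotropic; but invariance of the form gives $T(S,\g_\theta)=T([S,S],\g_\theta)=T(S,[S,\g_\theta])\subseteq T(S,S)=\{0\}$, forcing $S=\{0\}$, a contradiction. Thus $I\subseteq\mathfrak{R}(\g)$ is a minimal solvable ideal and $[I,I]=\{0\}$.

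Next I would exhibit the double extension. Invariance of $B$ gives, for $x\in I^\perp$, $y\in I$ and any $w\in\g$, that $B([x,y],w)=B(x,[y,w])=0$ because $[y,w]\in I$; hence $[I^\perp,I]=\{0\}$ and the adjoint action of $\g$ on $I$ factors through $A:=\g/I^\perp$. The pairing $\langle[x],y\rangle:=B(x,y)$ is well defined and nondegenerate, and a short invariance computation shows it is $A$- and $\theta$-equivariant, so $A\cong I^\ast$ as $\theta$-equivariant $A$-modules (coadjoint action). Moreover $I^\perp$ is $\theta$-invariant and equals the radical of $B|_{I^\perp}$ (because $(I^\perp)^\perp=I\subseteq I^\perp$), so $B$ descends to a nondegenerate $\theta$-compatible form on $W:=I^\perp/I$, making $(W,[\ ,\ ]_V,\alpha_V,B_V)$ an IQH-Lie algebra. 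Minimality of $I$ means $I$ is a $\theta$-irreducible $A$-module; since $\theta$-invariant submodules of $A^\ast\cong I$ correspond to $\theta$-invariant ideals of $A$, the algebra $A$ has no proper nonzero $\theta$-invariant ideal. If $A$ were abelian its coadjoint action would be trivial, giving $[\g,I]=\{0\}$ and $I\subseteq\mathcal{Z}(\g)=\{0\}$, a contradiction; hence $A$ is non-abelian and therefore a simple involutive Hom-Lie algebra, and the one-dimensional alternative of the Medina--Revoy dichotomy is ruled out exactly by $\mathcal{Z}(\g)=\{0\}$ (a double extension by a one-dimensional Lie algebra places its generator $e$ in the center, as in Theorem \ref{DoubleExtension0}).

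It then remains to identify $\g$ with the involutive double extension of $W$ by $A$. Choosing a $\theta$-invariant isotropic complement realizing $\g=A\oplus W\oplus A^\ast$ and reading off the bracket and the form, one recovers the representation $\phi$ of $A$ on $W$, the cocycle $\psi$, and precisely the relations \eqref{TDE1}--\eqref{TDE3} of the involutive double extension theorem. I expect this reconstruction, performed $\theta$-equivariantly so that each piece respects the grading induced by $\theta$, to be the main technical obstacle; it is the step that proceeds exactly as in the quadratic Lie algebra case treated in \cite{BenamorBenayadi}, and completes the proof that the non-simple irreducible centerless IQH-Lie algebra is an involutive double extension of an IQH-Lie algebra by a simple involutive Hom-Lie algebra.
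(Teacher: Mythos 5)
Your proposal is sound, and it organizes the argument in a genuinely different way: it is the classical Medina--Revoy scheme run $\theta$-equivariantly, whereas the paper runs that scheme in the opposite order. You start from a minimal Hom-ideal $I$, prove it totally isotropic, form the quotients $A=\g/I^{\perp}$ and $W=I^{\perp}/I$, and deduce simplicity of $A$ from the coadjoint-module identification $I\cong A^{\ast}$ ($\theta$-stable submodules of $A^{\ast}$ correspond, via annihilators, to $\theta$-stable ideals of $A$, and the abelian case is excluded by $\mathcal{Z}(\g)=\{0\}$); only at the end do you need to realize $A$ inside $\g$. The paper instead invokes Corollary \ref{cor6.5} at the outset, writing $\g=\mathfrak{s}_1\oplus\cdots\oplus\mathfrak{s}_n\oplus\mathfrak{R}(\g)$, takes the simple Hom-subalgebra $A=\mathfrak{s}_1$ and the complementary maximal ideal $J$, and only afterwards sets $I=J^{\perp}$; simplicity of $A$ and the splitting $\g=A\oplus V\oplus I$ then come for free, and the bulk of the paper's proof is the explicit verification that this decomposition carries the data \eqref{TDE1}--\eqref{TDE3} with $I\cong A^{\ast}$. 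Your route buys a conceptual explanation of why the extending algebra must be simple (in the paper this is built in by the choice of $\mathfrak{s}_1$) and an intrinsic description of $V$ as $I^{\perp}/I$; the paper's route buys exactly the step you defer. On that deferred step, one caution: it does not go through ``exactly as in the quadratic Lie algebra case'' of \cite{BenamorBenayadi}, because classical Levi--Malcev produces a section of $\g\rightarrow A$ but not a $\theta$-invariant one; you need Taft's theorem \cite{taft}, the same ingredient behind Corollary \ref{cor6.5}. Concretely: since $A$ is simple, $\mathfrak{R}(\g_{\theta})\subseteq I^{\perp}$; choose a $\theta$-invariant Levi component $\mathfrak{s}$ of $\g_{\theta}$; then $\mathfrak{s}\cap I^{\perp}$ is a $\theta$-invariant ideal of $\mathfrak{s}$, and its unique complementary ideal $\mathfrak{c}$ in $\mathfrak{s}$ is automatically $\theta$-invariant and maps isomorphically onto $A$, giving the required $\theta$-invariant lift, after which the verification of \eqref{TDE1}--\eqref{TDE3} (written out in full in the paper, only asserted in your sketch) completes the identification of $\g$ with the involutive double extension of $W$ by $A$. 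A minor remark: your detour through the radical to prove $[I,I]=\{0\}$ is redundant, since this follows at once from $[I^{\perp},I]=\{0\}$ and $I\subseteq I^{\perp}$, both of which you establish in the next paragraph anyway.
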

\begin{proof}
Let  $( \g, [\ ,\ ],\alpha,Q)$ be an irreducible   involutive Hom-Lie algebra such that  $\g$ is not simple, $dim\g >1$ and $\mathcal{Z}(\g)=\{0\}$.
Then $\g \neq \mathfrak{R}(\g)$ and $\g=\mathfrak{s}_1\oplus\cdots\oplus \mathfrak{s}_n \oplus\mathfrak{ R}(\g)$ where $\mathfrak{s}_i$ are simple Hom-subalgebras (Corollary \ref{cor6.5}), $J=\mathfrak{s}_2\oplus\cdots\oplus \mathfrak{s}_n \oplus\mathfrak{ R}(\g)$ is a maximal ideal of $\g$ such that $\g=\mathfrak{s}_1\oplus J$.
Set $A=\mathfrak{s}_1$ which  is a subalgebra of  $( \g, [\ ,\ ],\alpha)$. The  fact that $J$ is a maximal ideal implies that $I:=J^\bot$ is a minimal ideal of $\g$. Also $I\bigcap I^\bot$ is an ideal of $\g$ contained in $I$. Then $I\bigcap I^\bot=\{ 0\}$ or $I\bigcap I^\bot=I$, i.e. $I\subset I^\bot=J$. Since $\g$ is irreducible then it implies that $I\subset J$, consequently $B(I,I)$=\{0\}, i.e. $I$ is isotropy.

It is clear that $B_{|(I\oplus A)\times (I\oplus A)}$ is nondegenerate. It follows that $\g=(I\oplus A)\oplus (I\oplus A)^\bot$.
We set $V:=(I\oplus A)^\bot$, which is a vector subspace of $\g$.
Consequently, $J=I\oplus V$ and $B_{V\times V}$ is nondegenerate. Since $[I,I^\bot ]=\{0\}$, then  $[I,I]=\{0\}$ because $I\subset J=I^\perp $ and $[I,V]=\{ 0\}$.

Let $x,y\in V$, then $[x,y]=[x,y]_V+\varphi (x,y)$, where $[\ ,\ ]_V:V\times V\rightarrow V$ and  $\varphi:V\times V\rightarrow I$ are bilinear maps.

Let $x\in V$ and $a\in A$, then $[a,x]=\phi(a)(x)+\varphi'(a,x)$ where  $\varphi':A\times V\rightarrow I$ is a bilinear map and $\phi(a)\in End(V)$.

We denote, for $a,a'\in A$, the bracket  $[a,a']$ by $[a,a']_A$.

Let $x,y\in V$ and $a\in A$, we have $B([x,y],a)=B(\varphi(x,y),a)$ and $B(x,[y,a])=-B(x,[a,y])=-B(x,\phi(a)y)$, then
$$B(\varphi(x,y),a)=-B(x,\phi(a)y).$$
Also $B([a,x],y)=-B(x,[a,y])$ is equivalent to
$$B(\phi(a)x,y)=-B(x,\phi(a)y).$$
Therefore we have
$$B(\varphi(x,y),a)=-B(x,\phi(a)y)=B(\phi(a)x,y).$$

For $x\in V$ we have  $\alpha (x)\in J=I^\perp$ which implies $\alpha (x)=\alpha_V(x)+\gamma (x)$ where $\alpha_V(x)\in V$ and $\gamma (x)\in I.$

Let  $x\in V$ and $a\in A$, then $B(\alpha (x),a)=B(x,\alpha(a))$ implies $B(\gamma (x), a)=B(x,\alpha (a))=0$ since $B(V,A)=\{ 0\}$. Therefore $\gamma (x)=0.$  Thus $\alpha(V)\subseteq V$ and we denote in the sequel $\alpha_{| V}$ by $\alpha_V$.

Let  $x\in V$ and $a,b\in A$, then $B([a,x],b)=B(\varphi'(a,x),b)$ and on the other hand $B([a,x],b)=-B(x,[a,b])=0$ (because $B(V,A)=\{ 0\}$). Therefore $B(\varphi'(a,x),b)=0$ for any $b\in A$. Then $\varphi'(a,x)=0.$ Thus
$$[a,x]=\phi(a)x, \quad \forall a\in A, \forall x\in V.
$$
Let  $x,y\in V$, then $[x,y]=-[y,x]$ implies
\begin{eqnarray*}
[x,y]_V=-[y,x]_V,\\
\varphi (x,y)=-\varphi (y,x).
\end{eqnarray*}
We show that $[\ ,\ ]_V$ and $\varphi$ satisfy the Hom-Jacobi condition. Let $x,y,z\in V$, we have
 \begin{align*}
[\alpha (x),[y,z]]&= [\alpha (x),[y,z]_V] \quad \text{because } [V,I]=\{0\}\\
\ & = [\alpha (x),[y,z]_V]_V+ \varphi(\alpha (x),[y,z]_V)\\
\ & = [\alpha_V (x),[y,z]_V]_V+ \varphi(\alpha_V (x),[y,z]_V).
\end{align*}
Then
$$0=\circlearrowleft_{x,y,z}{[\alpha (x),[y,z]]}=\circlearrowleft_{x,y,z}{[\alpha_V (x),[y,z]_V]_V}+\circlearrowleft_{x,y,z}{\varphi(\alpha_V (x),[y,z]_V)}
$$
Since the first summation is in $V$ and the second in $I$ then we obtain
\begin{eqnarray}
\circlearrowleft_{x,y,z}{[\alpha_V (x),[y,z]_V]_V}=0,\\\label{HomJacobiW}
\circlearrowleft_{x,y,z}{\varphi(\alpha_V (x),[y,z]_V)}=0.
\end{eqnarray}
Also
$\alpha([x,y])=[\alpha(x),\alpha(y)]=[\alpha_V(x),\alpha_V(y)]$ implies
$\alpha_V([x,y]_V)+\alpha(\varphi(x,y))=[\alpha_V(x),\alpha_V(y)]_V+\varphi(\alpha_V(x),\alpha_V(y)).$ Therefore
\begin{eqnarray*}
\alpha_V([x,y]_V)=[\alpha_V(x),\alpha_V(y)]_V\\
\alpha(\varphi(x,y))=\varphi(\alpha_V(x),\alpha_V(y)).
\end{eqnarray*}


It is obvious that  $\alpha^2=id$ induces $\alpha_V^2=id$.

The previous calculations leads to the fact that $(V,[\ ,\ ]_V,\alpha_V)$ is an involutive Hom-Lie algebra.

The Hom-Lie algebra $(V,[\ ,\ ]_V,\alpha_V)$ is an IQH-Lie algebra by setting $B_V=B_{|V\times V}$. Indeed for any $x,y,z\in V$ we have $$B([x,y]_V,z)=B([x,y],z)=B(x,[y,z])=B(x,[y,z]_V)$$
and $$B(\alpha_V(x),y)=B(\alpha(x),y)=B(x,\alpha(y))=B(x,\alpha_V(y)).$$

Using the identity \eqref{HomJacobiW} we have for any $a\in A$ and $x,y,z\in V$
\begin{eqnarray*}
B(\circlearrowleft_{x,y,z}{\varphi(\alpha_V (x),[y,z]_V)},a)=0\\
\Longleftrightarrow\ \circlearrowleft_{x,y,z}{B(\varphi(\alpha_V (x),[y,z]_V),a)}=0\\
\Longleftrightarrow\ \circlearrowleft_{x,y,z}{B(\alpha_V (x),\phi(a)([y,z]_V))}=0\\
\end{eqnarray*}
then we have
\begin{equation}\label{star}
\circlearrowleft_{x,y,z}{B(\phi(a)(\alpha_V (x)),[y,z]_V)}=0
\end{equation}

Expanding the identity \eqref{star} and using the fact that $(V,[\ ,\ ]_V,\alpha_V,B_V)$ is an IQH-Lie algebra we obtain
\begin{equation*}
B_V(\phi(a)(\alpha_V (x)),[y,z]_V)+B_V(\phi(a)(\alpha_V (y)),[z,x]_V)+B_V(\phi(a)(\alpha_V (z)),[x,y]_V)=0
\end{equation*}
which is equivalent to
\begin{equation*}
B_V([\phi(a)\circ\alpha_V (x),y]_V+[x,\phi(a)\circ\alpha_V (y)]-\alpha_V\circ\phi(a)[x,y]_V,z)=0.
\end{equation*}
Then we have
\begin{equation}\label{plus1}
\alpha_V\circ\phi(a)[x,y]_V=[\phi(a)\circ\alpha_V (x),y]_V+[x,\phi(a)\circ\alpha_V (y)]_V.
\end{equation}

Let $a\in A$ and $x,y\in V$, the Hom-Jacobi identity
\begin{equation*}
[\alpha(a),[x,y]]+[\alpha(x),[y,a]]+[\alpha(y),[a,x]]=0
\end{equation*}
may be written
\begin{equation*}
[\alpha(a),[x,y]_V]+[\alpha(a),\varphi(x,y)]-[\alpha(x),\phi(a)y]_V-\varphi(\alpha(x),\phi(a)y)+[\alpha(y),\phi(a)x]_V+\varphi(\alpha(y),\phi(a)x)=0.
\end{equation*}
It's equivalent to
\begin{equation*}
\phi(\alpha(a))([x,y]_V)+[\alpha(a),\varphi(x,y)]-[\alpha(x),\phi(a)y]_V-\varphi(\alpha(x),\phi(a)y)+[\alpha(y),\phi(a)x]_V+\varphi(\alpha(y),\phi(a)x)=0.
\end{equation*}
By gathering the elements in $V$ and elements in $I$, the previous identity leads to
\begin{eqnarray}\label{plus2}
\phi(\alpha(a))([x,y]_V)=[\alpha(x),\phi(a)y]_V+[\phi(a)x,\alpha(y)]_V,\\
\label{star2} [\alpha(a),\varphi(x,y)]=\varphi(\alpha(x),\phi(a)y)+\varphi(\phi(a)x,\alpha(y)).
\end{eqnarray}
The identity \eqref{plus2} leads for any $b\in A$ to
\begin{eqnarray*}
B(\phi(\alpha(a))([x,y]_V),b)=B([\alpha(x),\phi(a)y]_V,b)+B([\phi(a)x,\alpha(y)]_V,b)\\
\Leftrightarrow\ B(\varphi(x,y),[b,\alpha(a)])=B(\phi(b)(\alpha(x)),\phi(a)y)+B(\phi(b)\phi(a)(x),\alpha(y))\\
\Leftrightarrow \ B(\phi([b,\alpha(a)])(x),y)=-B(\phi(a)\phi(b)(\alpha(x)),y)+B(\alpha(\phi(b)\phi(a)(x)),y).
\end{eqnarray*}
Hence
\begin{equation*}\phi([b,\alpha(a)])(x)=-\phi(a)\phi(b)\alpha+\alpha(\phi(b)\phi(a),
\end{equation*}
which may be written
\begin{equation}\label{Z1}
\phi([\alpha(a),b])=\phi(a)\phi(b)\alpha-\alpha\phi(b)\phi(a).
\end{equation}
Let $a,b\in A$ and $x\in V$, the Hom-Jacobi identity
\begin{equation*}
[\alpha(a),[b,x]]+[\alpha(b),[x,a]]+[\alpha(x),[a,b]]=0
\end{equation*}
may be written
\begin{equation*}
\phi(\alpha(a))\phi(b)(x)-\phi(\alpha(b))\phi(a)(x)-\phi([a,b])(\alpha(x))=0.
\end{equation*}
Therefore we have
\begin{equation}\label{Z2}
\phi([a,b])\circ\alpha=\phi(\alpha(a))\phi(b)-\phi(\alpha(b))\phi(a).
\end{equation}
In fact the identities \eqref{Z1} and \eqref{Z2} are equivalent.

Let $a\in A$ and $x\in V$, the identity
$
\alpha([a,x])=[\alpha(a),\alpha(x)]
$
may be written $\alpha(\phi(a)(x))=\phi(\alpha(a))(\alpha(x))$ and also $\alpha_V(\phi(a)(x))=\phi(\alpha_A(a))(\alpha_V(x))$ where $\alpha_A=\alpha_{|A}$. Then we have
\begin{equation*}\phi\circ\alpha_A=\alpha_V\phi(a)\alpha_V^{-1}=\alpha_V\phi(a)\alpha_V.
\end{equation*}

Now, we evaluate $\phi (\alpha(a))[x,y]_V$ for $a\in A$ and $x,y\in V$. We have $\phi (\alpha(a))[x,y]_V=\alpha_V\circ (\alpha_V\circ\phi (\alpha(a))[x,y]_V$ since $\alpha_V$ is a involution. By applying the identity \eqref{plus1} we obtain
\begin{eqnarray*}
\phi (\alpha(a))[x,y]_V&&=\alpha_V([\phi(\alpha(a))\circ\alpha_V (x),y]_V+[x,\phi(\alpha(a))\circ\alpha_V (y)]_V)\\
\ &&=[\alpha_V\circ\phi(\alpha(a))\circ\alpha_V (x),\alpha_V(y)]_V+[\alpha_V(x),\alpha_V\circ\phi(\alpha(a))\circ\alpha_V (y)]_V\\
\ &&=[\alpha(x),\phi(a)y]_V+[\phi(a)x,\alpha(y)]_V
\end{eqnarray*}
Therefore we proved that the identity \eqref{plus1} implies \eqref{plus2}. In an other hand we have
\begin{eqnarray*}
\alpha_V \phi (a)[x,y]_V&&=\alpha_V(\alpha_V\phi(\alpha(a))\alpha_V)[x,y]_V\\
\ && = \phi(\alpha(a))\alpha_V[x,y]_V=\phi(\alpha(a))[\alpha_V(x),\alpha_V(y)]_V\\
\ &&=[x,\phi(a)(\alpha_V(y))]_V+[\phi(a)(\alpha_V(x)),y]_V.
\end{eqnarray*}
Therefore we proved the converse. Hence the  identities \eqref{plus1} and \eqref{plus2} are equivalent.

We set for $a\in A$ and $i\in I$, $[a,i]:=a\cdot i.$ The Hom-Jacobi condition
$[\alpha(a),[b,i]]+[\alpha(b),[i,a]]+[\alpha(i),[a,b]]=0$ leads to
\begin{equation}\label{doublestar}
\alpha(a)\cdot (b\cdot i)-\alpha(b)\cdot(a\cdot i)=[a,b]\cdot (\alpha (i))
\end{equation}
which is equivalent to \eqref{star2}.

Denote
$
l: \begin{array}{c}
 A\rightarrow End(I) \\
a\mapsto l (a)
\end{array}
$ where
$
l(a): \begin{array}{c}
I \rightarrow I \\
i\mapsto l(a)(i)=[a,i]=a\cdot i
\end{array}
$.

The identity \eqref{doublestar} may be written for $a,b\in A$
\begin{equation}
l([a,b])\circ \alpha_I =l(\alpha_A(a))l(b)-l(\alpha_A(b))l(a).
\end{equation}
Notice that
\begin{equation}
l\circ \alpha_A (a)=\alpha_I\circ l(a) \circ\alpha_I
\end{equation}
since $l(\alpha_A(a))(i)=[\alpha_A (a),i]=\alpha_I [a,\alpha_I (i)].$

Also we have for $i\in I$ and $a,b,c\in A$
$$B([a,b],i)=B(a,[b,i])=B(a,l(b)(i))$$
and
\begin{align*}
B(c,[\alpha(a),[b,i]]+[\alpha(b),[i,a]]+[\alpha(i),[a,b]]&=B([b,[\alpha(a),c]]-[a,[\alpha(b),c]]+\alpha([[a,b],c]),i)\\
\ & =B([b,[\alpha(a),c]]+[a,[c,\alpha(b)]]+\alpha([c,[b,a]]),i)\\
\ & =B(\alpha([\alpha(b),[a,\alpha(c)]]+[\alpha (a),[\alpha(c),b]]+[c,[b,a]]),i)\\
\ & =0
\end{align*}
since $c=\alpha(\alpha(c)).$

It is clear that $
\nabla: \begin{array}{c}
I \rightarrow A^\ast \\
i\mapsto \nabla(i)=B( i,\cdot )
\end{array}
$ is an isomorphism of vector spaces.

Denote $
\alpha_A^\ast: \begin{array}{c}
A^\ast \rightarrow A^\ast \\
f\mapsto \alpha_A^\ast (f)=f\circ\alpha_A
\end{array}
$ i.e. $\alpha_A^\ast= ^t \alpha_A.$

Let $i\in I$ and $a\in A$, we have
$ \nabla(\alpha_I (i)(a)=B(\alpha_I(i),a)=B(i,\alpha_A(a))=\nabla(i)\circ\alpha_A(a).$
Then
\begin{eqnarray}\alpha^\ast (\nabla(i))=\nabla(i)\circ\alpha_A=\nabla(\alpha_I (i).
\end{eqnarray}
Also $B(a\cdot i,b)=B([a,i],b)=-B(i,[a,b])$ that is
\begin{equation}
\nabla (a\cdot i)(b)=-\nabla (i)([a,b]).
\end{equation}
 Thus $\nabla$ is an isomorphism of the $A$-modules $I$ and $A^\ast.$

Recall that if $(\g,[\ ,\ ],\alpha )$ is a Hom-Lie algebra then $( \g \ast^ , \widetilde{ad},\widetilde{\alpha })$ is a representation if and only if  for $x,y,z\in \g$
\begin{equation}\label{cd}
\alpha([[x,y],z])=[y,[\alpha(x),z]]+[x,[z,\alpha(y)]].
\end{equation}
In our case $\alpha$ is an involution which leads to $\alpha_A$ is also an involution. Then the identity \eqref{cd} is satisfied  which implies that $(A^\ast , \widetilde{ad_A},\alpha_A^\ast= ^t \alpha_A)$ is a representation of $(A, [\ ,\ ]_A,\alpha_A)$ (i.e. $A^\ast$ is an $A$-module).
 $$
\phi: \begin{array}{c}
V\times V \rightarrow I  \rightarrow A^\ast \\
(x,y) \mapsto \varphi(x,y) \mapsto B(\varphi(x,y), \cdot )
\end{array}
$$
Consider $\Gamma :\g=A\oplus V\oplus I\rightarrow A\oplus V\oplus A^\star$ defined by $\Gamma(a+v+i)=a+v+\nabla (i)$. It is easy to show that it is a Hom-Lie algebras morphism. Notice that  $A\oplus V\oplus A^\star$ is a double extension of $V$ by $A$.
\end{proof}
Let us denote by $\mathcal{U}$ the set incorporating the trivial Hom-Lie algebra $\{0\}$, the one-dimensional Hom-Lie algebra and  simple involutive Hom-Lie algebras.
 \begin{theorem}
 Let  $( \g, [\ ,\ ],\alpha,B)$ be an    IQH-Lie algebra such that $\mathcal{Z}(\g)=\{0\}$. Then  it is either an element of $\g$ or it is obtained from a finite number of element of $\mathcal{U}$ by a sequence of involutive  double extension by the one-dimensional Hom-Lie algebra and/or double extension by a simple involutive  Hom-Lie algebra and/or orthogonal direct sum of IQH-Lie algebras.
 \end{theorem}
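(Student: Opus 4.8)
The plan is to argue by strong induction on $\dim\g$, showing that every IQH-Lie algebra with $\mathcal{Z}(\g)=\{0\}$ lies in the closure $\overline{\mathcal{U}}$ of $\mathcal{U}$ under the three admissible operations (orthogonal direct sum, involutive double extension by the one-dimensional Hom-Lie algebra, and double extension by a simple involutive Hom-Lie algebra). The base cases are immediate: $\dim\g=0$ gives $\g=\{0\}\in\mathcal{U}$, while $\dim\g=1$ cannot occur, since a one-dimensional Hom-Lie algebra is abelian and hence has nonzero center, contradicting $\mathcal{Z}(\g)=\{0\}$.

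First I would dispose of the reducible case. Applying the orthogonal decomposition of a quadratic Hom-Lie algebra into irreducible ideals, write $\g=\g_1\oplus\cdots\oplus\g_n$ with $B(\g_i,\g_j)=\{0\}$ for $i\neq j$ and each $\g_i$ an irreducible IQH-Lie algebra. Since these are mutually commuting nondegenerate ideals (as $[\g_i,\g_j]\subseteq\g_i\cap\g_j=\{0\}$), one checks that $\mathcal{Z}(\g)=\mathcal{Z}(\g_1)\oplus\cdots\oplus\mathcal{Z}(\g_n)$, so the hypothesis $\mathcal{Z}(\g)=\{0\}$ forces each $\mathcal{Z}(\g_i)=\{0\}$. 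If $n\geq 2$, every $\g_i$ is a centerless IQH-Lie algebra of dimension strictly less than $\dim\g$, hence lies in $\overline{\mathcal{U}}$ by the inductive hypothesis, and $\g$ is their orthogonal direct sum; thus $\g\in\overline{\mathcal{U}}$. It therefore remains to treat the irreducible case $n=1$.

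So suppose $\g$ is irreducible, centerless, with $\dim\g>1$. Here I would invoke the preceding structure theorem for irreducible centerless IQH-Lie algebras: either $\g$ is a simple involutive Hom-Lie algebra, in which case $\g\in\mathcal{U}$ and we are done, or $\g$ is an involutive double extension of some IQH-Lie algebra $(V,[\ ,\ ]_V,\alpha_V,B_V)$ by a simple involutive Hom-Lie algebra $A$. In the latter case $\g=A\oplus V\oplus A^*$, so $\dim V=\dim\g-2\dim A<\dim\g$. The subtle point is that $V$ need not be centerless. To handle this I would apply the earlier corollary reducing an arbitrary IQH-Lie algebra to centerless IQH-Lie algebras, which exhibits $V$ as built from centerless IQH-Lie algebras by involutive double extensions by the one-dimensional Hom-Lie algebra together with orthogonal direct sums. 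Each centerless piece occurring in this reduction has dimension at most $\dim V<\dim\g$, so by the inductive hypothesis each lies in $\overline{\mathcal{U}}$; consequently $V\in\overline{\mathcal{U}}$. Finally, since $\g$ is the double extension of $V$ by the simple involutive algebra $A$, appending this one extension to a construction sequence for $V$ places $\g$ in $\overline{\mathcal{U}}$, completing the induction.

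The main obstacle is exactly this last subcase: controlling the algebra $V$ produced by the double extension, which a priori carries a nontrivial center and so is not directly covered by the induction hypothesis. The resolution hinges on interleaving two reduction schemes---the reduction of an irreducible centerless algebra by a single simple double extension (from the preceding theorem) and the reduction of a general IQH-Lie algebra to its centerless constituents by one-dimensional double extensions (from the earlier corollary)---and on checking that every intermediate algebra entering the induction has strictly smaller dimension, which guarantees termination. A secondary point to verify carefully is that the three operations genuinely compose, i.e. that performing the construction of $V$ and then a single double extension by $A$ is itself an admissible construction sequence starting from $\mathcal{U}$.
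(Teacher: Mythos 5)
Your proposal is correct, and it follows exactly the route the paper intends: the paper states this theorem without an explicit proof, as a synthesis of its preceding results (the orthogonal decomposition of a quadratic Hom-Lie algebra into irreducible ideals, the corollary reducing any IQH-Lie algebra to centerless ones via one-dimensional involutive double extensions and orthogonal sums, and the structure theorem for irreducible centerless IQH-Lie algebras), and your strong induction on $\dim\g$ is precisely the bookkeeping needed to chain those results together. Your explicit treatment of the one subtle point the paper leaves implicit --- that the algebra $V$ produced by a double extension along a simple involutive Hom-Lie algebra need not be centerless, so the induction hypothesis must be routed through the centerless constituents of $V$ of strictly smaller dimension --- is a welcome refinement rather than a deviation.
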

According to Theorem  \ref{SimpleInvolutiveHomLie} and Proposition \ref{Prop6.3} simple involutive Hom-Lie algebras are determined by simple or semisimple Lie algebras with involution.
\bibliographystyle{amsplain}
\providecommand{\bysame}{\leavevmode\hbox to3em{\hrulefill}\thinspace}
\providecommand{\MR}{\relax\ifhmode\unskip\space\fi MR }
\providecommand{\MRhref}[2]{%
  \href{http://www.ams.org/mathscinet-getitem?mr=#1}{#2}
}
\providecommand{\href}[2]{#2}

\end{document}